\numberwithin{equation}{section}
\DeclareMathAlphabet{\pazocal}{OMS}{zplm}{m}{n}
\renewcommand\d{\partial}
\def\eps{\varepsilon}
\DeclareMathOperator{\iD}{i}
\DeclareMathOperator{\eD}{e}
\DeclareMathOperator{\dif}{d}
\DeclareMathOperator{\tah}{th}
\DeclareMathOperator{\coh}{ch}
\DeclareMathOperator{\sih}{sh}
\DeclareMathOperator{\Span}{Span}
\DeclareMathOperator{\sign}{sgn}
\newcommand\R{\mathbb R}
\newcommand\C{\mathbb C}
\newcommand\Z{\mathbb Z}
\newcommand{\beqs}{\begin{equation*}}
\newcommand{\eeqs}{\end{equation*}}
\newcommand\br{\begin{remark}}
\newcommand\er{\end{remark}}
\newcommand\bp{\begin{pmatrix}}
\newcommand\ep{\end{pmatrix}}
\newcommand{\be}{\begin{equation}}
\newcommand{\ee}{\end{equation}}
\newcommand\ba{\begin{equation}\begin{aligned}}
\newcommand\ea{\end{aligned}\end{equation}}
\newcommand\ds{\displaystyle}
\newcommand{\bap}{\begin{app}}
\newcommand{\eap}{\end{app}}
\newcommand{\begs}{\begin{exams}}
\newcommand{\eegs}{\end{exams}}
\newcommand{\beg}{\begin{example}}
\newcommand{\eeg}{\end{exaplem}}
\newcommand{\bpr}{\begin {proposition}}
\newcommand{\epr}{\end{proposition}}
\newcommand{\bt}{\begin{theorem}}
\newcommand{\et}{\end{theorem}}
\newcommand{\bc}{\begin{corollary}}
\newcommand{\ec}{\end{corollary}}
\newcommand{\bl}{\begin{lemma}}
\newcommand{\el}{\end{lemma}}
\newcommand{\bd}{\begin{definition}}
\newcommand{\ed}{\end{definition}}
\newcommand{\brs}{\begin{remarks}}
\newcommand{\ers}{\end{remarks}}
\newtheorem{theorem}{Theorem}[section]
\newtheorem{proposition}[theorem]{Proposition}
\newtheorem{corollary}[theorem]{Corollary}
\newtheorem{lemma}[theorem]{Lemma}
\theoremstyle{remark}
\newtheorem{remark}[theorem]{Remark}
\theoremstyle{definition}
\newtheorem{definition}[theorem]{Definition}
\newtheorem{example}[theorem]{Example}
\newcommand\cB{{\mathcal B}}
\newcommand\cD{{\mathcal D}}
\newcommand\cU{\mathcal{U}}
\newcommand\cV{{\mathcal V}}
\newcommand\cC{{\mathcal C}}
\newcommand\cN{{\mathcal N}}
\newcommand\cF{{\mathcal F}}
\newcommand\cO{{\mathcal O}}
\newcommand\cM{{\mathcal M}}
\newcommand\cZ{{\mathcal Z}}
\newcommand\tF{{\tilde F}}
\newcommand\tG{{\tilde G}}
\newcommand\tq{{\tilde q}}
\title[Transverse instability of Stokes waves]{Small-amplitude finite-depth Stokes waves\\are transversally unstable}
\author{Ziang Jiao}
\address{Academy of Mathematics and Systems Science, Chinese Academy of Sciences, Beijing
100190 China.}
\email{jiaoziang@amss.ac.cn}
\thanks{}
\author{L.~Miguel Rodrigues}
\address{
Univ Rennes, CNRS, IRMAR - UMR 6625, F-35000 Rennes, France}
\email{{\tt luis-miguel.rodrigues@univ-rennes1.fr}}
\thanks{Research of L.M.R. was supported by the Institut Universitaire de France and partially supported by France 2030 through the programme Centre Henri Lebesgue ANR-11-LABX-0020-01.}
\author{Changzhen Sun}
\address{Universit\'e de Marie-et-Louis-Pasteur, CNRS, Laboratoire de Math\'ematiques de Besan\c{c}on (UMR CNRS 6623), F-25000 Besançon, France.}
\email{changzhen.sun@univ-fcomte.fr}
\author{Zhao Yang}
\address{State Key Laboratory of Mathematical Sciences, Academy of Mathematics and Systems Science, Chinese Academy of Sciences, Beijing
100190 China.}
\email{yangzhao@amss.ac.cn}
\thanks{}
\begin{document}

\maketitle

\begin{abstract}
We prove that all irrotational planar periodic traveling waves of sufficiently small-amplitude are spectrally unstable as solutions to three-dimensional inviscid finite-depth gravity water-waves equations.
\vspace{0.5em}

{\small \paragraph {\bf Keywords:} inviscid gravity water-waves equations; periodic traveling-waves; spectral instability; harmonic limit; Hamiltonian dynamics; Krein signature.
}

\vspace{0.5em}

{\small \paragraph {\bf 2020 MSC:} 76B15, 35B35, 35C07, 35B10, 37K45, 35Q35.
}
\end{abstract}

\section{Introduction}\label{sec:intro}

We contribute to the large-time stability analysis of the water-wave equations. Due to its mathematically challenging character and its practical impacts, the water-wave system has attracted so much attention that the literature devoted to the mathematical analysis of the water-wave system is overwhelmingly rich.  On its parts further from our main scope we simply refer the reader to\footnote{Obviously this does not mean that no significant improvement on those related fields have appeared since \cite{Lannes} but simply that we do not rely on those.} \cite{Lannes} for overall background. 

Our focus is on the dynamics near traveling waves. The literature devoted to traveling waves of water waves in their full generality (in type of flow and kind of wave) is also too large to be properly reported here and we refer instead the reader to the very recent review-and-perspective paper \cite{review}. As a piece of evidence that the field is not only large but also still fast growing, we point out that even since the very recent \cite{review}, significant progresses have been carried out, outside the main scope of the present paper, notably on infinite-depth Stokes waves \cite{Berti2022,creedon2023proof} and on capillary-gravity waves \cite{HTW,HY2023_cap}.

We restrict here our attention to equations describing a three-dimensional incompressible inviscid 
fluid over a flat bottom and under the influence of gravity and to their Stokes waves, that is, their curl-free planar periodic waves.
In particular the current contribution also belongs to the more general field studying periodic waves of Hamiltonian systems and the reader is referred to the introduction of\footnote{All along when quoting \cite{NRS-EP} we refer to the longer arXiv preprint {\tt arXiv:2210.10118} rather than to the shortened published version.} \cite{NRS-EP} for a small review and many references on the latter. Actually we point out that, to some extent,  along the years the possibility to cover the case of Stokes waves have served as an incentive for the development of a sufficiently general periodic-wave Hamiltonian theory. We refer to the cornerstone \cite{BM-BF} to exemplify how progresses in the study of Stokes waves have impacted periodic waves studies in general. We stress that to keep the discussion as focused as possible we restrict our discussion of the literature to its mathematical analysis side but we refer to the introduction of \cite{creedon2023proof} (devoted to the mathematical analysis of the infinite-depth case) as a good entering gate to the fluid mechanics, numerical and formal asymptotics sides. 

We defer to the next section reminders of how the water-wave equations and the corresponding Stokes waves are expressed in terms of mathematical symbols and instead provide here a rather informal statement of our main result. A more formal result is provided in Theorem~\ref{th:final}.

\begin{theorem}\label{th:main}
All finite-depth Stokes waves of sufficiently small amplitude are spectrally (and linearly\footnote{For the kind of exponential instabilities we consider the passage from spectral to linear is immediate. See Corollary~\ref{c:linear}.}) exponentially 
unstable under three-dimensional perturbations. 
\end{theorem}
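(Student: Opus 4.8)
The plan is to reduce the three-dimensional transverse-instability question to a perturbative spectral computation around the small-amplitude (harmonic) limit, following the Hamiltonian periodic-wave philosophy advertised in the introduction (cf.\ \cite{BM-BF,NRS-EP}). First I would set up the linearized water-wave operator around a Stokes wave of amplitude $\eps$, Bloch--Fourier transformed in the direction of propagation (Floquet exponent) and Fourier transformed in the transverse direction (transverse wavenumber $\ell$). The unperturbed operator at $\eps=0$ is the linearization at the trivial (flat) state, whose spectrum is explicitly given by the finite-depth dispersion relation $\omega(k)^2=g\,|k|\tanh(h|k|)$ with $k$ the full horizontal wavevector; this spectrum is purely imaginary and, crucially, exhibits collisions of eigenvalues carrying opposite Krein signatures once a nonzero transverse wavenumber is switched on. The key point is that adding a transverse component changes the effective wavenumber from a scalar to $\sqrt{\xi^2+\ell^2}$, which allows two co-propagating modes that would be non-resonant in 2D to become resonant in 3D.

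The second step is the bifurcation analysis at such a colliding pair. I would isolate a two-dimensional (generically) spectral subspace associated to the colliding purely-imaginary eigenvalues of the $\eps=0$ operator at the resonant $(\xi,\ell)$, and compute the Puiseux/analytic expansion of the perturbed eigenvalues in $\eps$ via a Lyapunov--Schmidt reduction to a $2\times2$ matrix. Because the full system is Hamiltonian, the reduced matrix is forced to have a Hamiltonian (or appropriately constrained) structure, so its two eigenvalues are either both purely imaginary or form a complex-conjugate pair with nonzero real parts; which alternative occurs is governed by the sign of a single discriminant built from the off-diagonal coupling coefficients. The heart of the proof is to show this discriminant has the instability sign, i.e.\ that the opposite-Krein-signature collision is genuinely broken into a spectral quartet (or at least a pair leaving the imaginary axis) rather than remaining neutrally stable; the Krein-signature obstruction to persistence of the collision on the imaginary axis is exactly what makes the generic outcome instability, and one must verify the relevant coupling coefficient is nonzero.

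The main obstacle, and where most of the work will go, is the explicit computation of that leading-order coupling coefficient for the water-wave system specifically: this requires knowing the Stokes-wave expansion (the $O(\eps)$ and $O(\eps^2)$ corrections to the profile and speed, i.e.\ the classical Stokes expansion adapted to finite depth and to the three-dimensional linearization), plugging it into the linearized Dirichlet--Neumann operator, and extracting the resonant matrix element. Two subtleties complicate this: first, the Dirichlet--Neumann operator is nonlocal and its shape-derivative expansion in $\eps$ must be handled carefully (one typically uses the good-unknown / paralinearization or a conformal-type flattening to make the expansion tractable); second, one must rule out the degenerate finite-depth values where the leading coefficient vanishes, or alternatively show that at such exceptional depths a higher-order term takes over — this is analogous to the Benjamin--Feir threshold phenomenon and is the place where finite depth genuinely differs from infinite depth.

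Finally I would assemble the pieces: having produced, for every sufficiently small $\eps>0$, at least one transverse wavenumber $\ell\neq0$ and Floquet exponent at which the linearized operator has spectrum off the imaginary axis, this yields spectral instability; the passage to linear (semigroup) instability is immediate for exponential modes and is recorded as Corollary~\ref{c:linear}, and collecting the formal objects introduced along the way into precise hypotheses gives the sharper Theorem~\ref{th:final}. I would also double-check uniformity: the resonant $(\xi,\ell)$ may itself depend on $\eps$ (drifting from the $\eps=0$ collision point), so the Lyapunov--Schmidt reduction must be carried out with the collision point as an additional unknown, solved for by the implicit function theorem, before the $2\times2$ eigenvalue discriminant is evaluated.
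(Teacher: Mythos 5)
Your proposal follows essentially the same route as the paper: Fourier/Floquet--Bloch reduction to symbols $L^\eps_{\ell,\xi}$, identification of an opposite-Krein-signature collision on the imaginary axis made resonant by the transverse wavenumber (the effective wavenumber $\sqrt{(\cdot+\xi)^2+\ell^2}$ is exactly the mechanism exploited here), a Hamiltonian-constrained $2\times2$ reduction whose discriminant decides stability, the Stokes/Dirichlet--Neumann expansions to evaluate the coupling coefficient, and an implicit-function-theorem tracking of the drifting collision point (this is Lemma~\ref{l:curve}).

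One step is under-specified in a way that matters for the ``all waves'' claim. First, the paper deliberately selects the collision of the modes $\eD^{\pm\iD\kappa x}$ (Fourier indices differing by $2\kappa$), which is what makes the coupling coefficient accessible at second order in $\eps$ with hand computations; a generic collision would not be. Second, and more importantly, the non-vanishing of the resulting index $\Gamma_\ell$ cannot be ruled out pointwise: its value at $\ell=0$ does vanish at one exceptional $\kappa$, and the paper's resolution is not a higher-order term in $\eps$ but an analytic extension of $\ell\mapsto\Gamma_\ell$ to $\ell=0$ together with its second-order Taylor coefficient $\d_\ell^2\Gamma(\kappa,0)$, followed by a computer-assisted interval-arithmetic verification that $(\Gamma(\kappa,0),\d_\ell^2\Gamma(\kappa,0))\neq(0,0)$ for every $\kappa$ (Proposition~\ref{p:numerics}). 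Your fallback (``a higher-order term takes over at exceptional depths'') points in the right spirit but in the wrong variable, and without some such quantitative non-degeneracy argument the plan only yields instability for generic, not all, small-amplitude Stokes waves.
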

 
We provide a complete proof that all small-amplitude finite-depth gravity Stokes waves are unstable as three-dimensional solutions. Yet we recall that, on one hand, at the spectral level, proving three-dimensional instability is a weaker\footnote{Roughly speaking because it involves a larger class of perturbations, the two-dimensional spectrum being included in the three-dimensional spectrum.} result than proving two-dimensional instability and, on the other hand, by combining \cite{BM-BF} with \cite{HY2023} one gets really close to a two-dimensional instability result. Concerning the former point, we point out that as customary in the field, we study an open-sea framework but we do not claim anything about propagation in channels\footnote{See the related discussion in \cite{YZ-multiD}, to be compared with \cite{JNRYZ,FRYZ}.}. 

Let us recall that when identifying waves coinciding up to translation and freezing the reference background fluid height, Stokes waves form a two-dimensional family, parameterized by spatial wavelength (or equivalently spatial wave number) and an amplitude-like parameter. In particular, untangling the zero-amplitude limit yields a one-dimensional family of harmonic profiles parameterized by wavelength. So far almost all the studies on the \textit{finite-depth} Stokes waves focus on their stability submitted to only longitudinal perturbations, that is, as a solution of two dimensional water wave system. 

The first mathematically rigorous study on the stability of finite-depth Stokes waves  goes back to \cite{BM-BF}, where the authors proved that there is a threshold (defined through a nonlinear equation) such that near harmonic waves whose wave number are above the threshold Stokes waves are spectrally unstable, validating the well-known  Benjamin-Feir instability criterion. The corresponding unstable spectrum is localized near the origin of the complex plane and its real parts grows as the square of the amplitude parameter, which is the maximal time-evolution growth scale. Though the Benjamin-Feir threshold is not completely explicit its defining equation is simple enough that even rigorous numerics, by interval arithmetics\footnote{See for instance \cite[Corollary~5.4]{HY2023}.}, may localize it fast and with good precision.

In the recent \cite{HY2023} (almost simultaneously with the formal expansions of \cite{Creedon_Deconinck_Trichtchenko_2022}) the other zone of the limiting spectrum from which an instability of maximal growth may arise was studied. There an index whose non-vanishing yields instability with growth scaling like the square of the amplitude was identified. Yet the supporting analysis involves computations carried out with symbolic softwares that are so heavy that it is dubious that they could be carried out by hand and the formula of the corresponding index is too large to be reasonably shown in a research paper. Needless to say that an analytical study of its vanishing seems out of reach. However a simple numerical evaluation of the index does hint at the fact that it vanishes only at one exceptional wave number, lying above the Benjamin-Feir threshold. What is missing to derive a complete proof of two-dimensional instability of all small-amplitude Stokes waves as conjectured in \cite{Creedon_Deconinck_Trichtchenko_2022,HY2023} is a proof that the index %of \cite{HY2023}
does not vanish below the Benjamin-Feir wave number threshold. For the moment the heaviness of the index formula precludes even a computer-assisted\footnote{As carried out in \cite{Barker} for the index of \cite{JNRZ_KdV-limit}.} proof. 

A few qualitative features of the instability proved in Theorem~\ref{th:main} are worth stating. 
\begin{enumerate}
\item As the two-dimensional instabilities studied in \cite{BM-BF,HY2023,BMV-BF}, the growth rate of the three-dimensional instability we prove scales at the maximal rate with respect to the wave amplitude, that is, as its square. For the sake of comparison we stress that in deep water two-dimensional perturbations of small-amplitude waves have already been proved to cause square-like growth rates --- see~\cite{Berti2022} --- so that a genuinely three-dimensional study would bring no further insight from this point of view.
\item The kind of perturbations we use is neither purely transverse, that is, not co-periodic in the longitudinal direction like in \cite{creedon2023proof} for infinite-depth Stokes waves,
nor two-dimensional, that is, not constant in the transverse direction. In this sense it strongly echoes the transverse instability of small-amplitude waves of the Schr\"odinger equations \cite{Audiard-Rodrigues}. 
\item As in \cite{BM-BF}, we do study an instability arising from the origin in the complex plane. Yet at a fixed amplitude the unstable spectrum we exhibit lies away from the origin (in a non-uniform way with respect to the amplitude) thus the exhibited instability is neither\footnote{For precise definitions of terminologies such as \emph{co-periodic}, \emph{modulational}, \emph{side-band}, \emph{etc.} we refer to \cite{Noble-Rodrigues}.} side-band, nor modulational. See Remark~\ref{rk:shape} for a detailed description of the shape of the spectrum studied to prove Theorem~\ref{th:main}. A similar remark applies to the two-dimensional instability proved in \cite{BMV-BF} for waves satisfying the Benjamin-Feir instability criterion.  By analogy with the Benjamin-Feir two-dimensional analysis \cite{BM-BF} and the Schr\"odinger analysis \cite{Audiard-Rodrigues} we expect that further work would also prove a modulational instability and as in \cite{Audiard-Rodrigues} connect it to the non hyperbolicity of an averaged\footnote{We refer to \cite{BGNR,BMR} for some background on the latter.} modulation system.  
\end{enumerate}

We would like now to say a few words about our strategy of proof. As a preliminary we point out that a significant part of our proof is inspired by \cite{NRS-EP}, on the spectral (in-)stability of electronic
Euler--Poisson system.  As acknowledged in its introduction, the analysis of \cite{NRS-EP} is itself partly inspired by formal expansions that have also been applied to study two-dimensional instabilities of Stokes waves \cite{Creedon_Deconinck_Trichtchenko_2022} away from zero in the complex planes, including those of \cite{HY2023}. We believe that the approach of \cite{Creedon_Deconinck_Trichtchenko_2022,NRS-EP} directly on the Bloch side is more tractable, direct and insightful\footnote{For instance a fact clear on numerical evaluations but not apparent in the analysis of \cite{HY2023} is that the index of \cite{HY2023} is always nonnegative hence an explanation for the numerically observed fact that the failure of its positivity is exceptional. In contrast, for the system studied in \cite{NRS-EP} the instability is concluded from the non-vanishing of some index, an event necessarily exceptional.} than the Evans' function approach\footnote{Incidentally we point out that the even more recent preprint \cite{berti2024high-frequency} does contain a Bloch-type analysis of the \cite{Creedon_Deconinck_Trichtchenko_2022,HY2023} zone, offering an alternative to the Evans' function approach of \cite{HY2023}.} of \cite{HY2023}; for a broader Bloch vs. Evans comparison we refer to \cite{R}. However, a fundamental additional ingredient is introduced here to identify an instability region that may be analyzed with a minimal level of computations so that they could be carried out with bare hands and yield an index readily evaluated through interval arithmetics so as to conclude instability in a final computer-assisted step of the proof. On this very last step we refer to \cite{Gomez-Serrano} for a brief introduction to the underlying techniques of computer-assisted proofs by interval arithmetics and a survey of some applications to free-boundary incompressible flows. 

To provide some further details, we recall (and anticipate on reminders of the next section) that the Zakharov/Craig-Sulem formulation transforms the original free-boundary three-dimensional differential systems of water-wave equations into pseudo-differential equations over $\R^2$ so that the generator of the linearized dynamics encoded by the latter equations is defined as acting on functions from $L^2(\R^2)$-based spaces. Let $L^\eps$ denote this operator, with $\eps$ marking the dependence on the amplitude of the background wave whereas the dependence on the wave number is left unmarked since it plays a passive role in the analysis. Then, by using a Fourier transform in the transverse direction and a Floquet-Bloch transform in the longitudinal direction the action of $L^\eps$ is decomposed as the actions of multipliers $L^\eps_{\ell,\xi}$ parameterized by a transverse frequency $\ell\in\R$ and a longitudinal Floquet exponent $\xi\in[-\pi/T,\pi/T)$, $T$ denoting the period of the background wave. Each multiplier $L^\eps_{\ell,\xi}$ is a pseudo-differential operator acting on functions from an $L^2(\R/(T\Z))$-based space and has compact resolvents hence discrete spectrum consisting entirely of eigenvalues of finite multiplicity.

We strive for the less computational situation to analyze. Because of the Hamiltonian structure an instability may arise only at non simple eigenvalues. Moreover $L^0_{\ell,\xi}$ being constant coefficients is diagonalized with $T$-periodic trigonometric monomials. From orthogonality of trigonometric monomials and the fact that in the
expansion of profiles as powers of $\eps$ the $\eps^m$-coefficient is a trigonometric polynomial of power less
than $m$, one deduces that the most computationally favorable case is the one where one finds a double eigenvalue of $L^0_{\ell,\xi}$ associated with eigenvectors of frequencies differing by twice the fundamental frequency, hence already connected by second order expansions of the profile. A less obvious, but well-known, fact is that the double eigenvalue must also exhibit non-definite Krein\footnote{We refer to the introduction of \cite{NRS-EP} for a more detailed discussion on the latter and to \cite[Chapter~7]{Kapitula-Promislow} for the related more basic background.} signature for an instability to arise.

The origin of the complex plane is a four-multiplicity eigenvalue of $L^0_{0,0}$ associated with eigenvectors with wave numbers either equal to $0$ or $\pm 2\pi/T$. First, by perturbing the latter we prove that there is a curve $\ell\mapsto (\ell,\xi_+(\ell))$ passing through $(0,0)$ such that when $\ell\neq0$, $L^0_{\ell,\xi_+(\ell)}$ possesses a double eigenvalue of opposite Krein signature, that is associated with eigenvectors with wave numbers $\pm 2\pi/T$, the other two small eigenvalues of $L^0_{\ell,\xi_+(\ell)}$ associated with constant eigenvectors being $\cO(\ell)$-away. Then, we analyze how this double eigenvalue of $L^0_{\ell,\xi_+(\ell)}$ splits under perturbation in $(\eps,\xi)$ by adapting the general framework designed in \cite{NRS-EP}. At this stage we obtain an index $\Gamma_\ell$ such that when it is nonzero then for some $c_\ell>0$ and any $\eps>0$ sufficiently small (depending on $\ell$) there is an open set $\Xi_{\ell,\eps}$ (shrinking to $\{\xi_+(\ell)\}$ when $\eps\to0$) such that when $\xi\in \Xi_{\ell,\eps}$, $L^\eps_{\ell,\xi}$ possesses an eigenvalue of real part bounded below by $c_{\ell}\,\eps^2$. 

Therefore, in the end,  to conclude the transverse instability one simply needs to check that the index $\Gamma_\ell$ is nonzero for some nonzero $\ell$. This is achieved by proving that $\ell\mapsto \Gamma_\ell$ possesses an analytic extension to a neighborhood of zero and computing explicitly the coefficients of its second-order Taylor expansion, which have a reasonably simple expression. Finally we appeal to interval arithmetics to prove that the three coefficients of the second-order expansion cannot vanish simultaneously. We stress that a large part of our analysis is devoted to going from an abstract formula for $\Gamma_\ell$ (given in Proposition~\ref{Prop:Gamma}) to an explicit Taylor expansion (whose coefficients are given in Corolllary~\ref{c:more} and Appendix~\ref{additional_formula}) suitable for arithmetic interval evaluations.

Let us add a final comment about how our proof deals with difficulties inherent to \emph{multidimensional} periodic-wave analyses. We first recall that the spectral analysis of analytic \emph{one-parameter} perturbations offers a framework that is considerably more regular and simpler than the general spectral perturbation theory and even than the \emph{two-parameters} perturbation theory as we consider here. Concerning the latter, we refer the reader to \cite{Katobook,Davies} for general background on spectral theory. Most transverse periodic stability analyses bypass this difficulty by restricting to co-periodic perturbations, which would correspond, here, to the study of how the spectrum of $L^\eps_{\ell,0}$ depends on $\ell$, or, more generally, to frozen Floquet perturbations. See for instance \cite{creedon2023proof} for the infnite-depth water-wave system and \cite{LBJM} for the Schr\"odinger equations. A notable exception is the Schr\"odinger analysis of \cite{Audiard-Rodrigues} and we point out that the full $(\ell,\xi)$ analysis was needed there to derive the spectral instabilty of small-amplitude waves. We stress that from being a purely technical question this lack of spectral regularity in multiple dimensions impacts the dynamics in various fundamental ways. It is responsible for dispersive effects of multidimensional hyperbolic systems or wave equations, enhancing local-in-space decay but deteriorating space localization. This affects periodic waves through their modulation systems;  see the detailed analysis in \cite{R2D2}. The spectral conical singularities also play a deep role in the dynamics of periodic Schr\"odinger operators; see for instance \cite{Fefferman-Weinstein,FKGH}. Our analysis fits in an intermediate place between one-parameter analyses and full multidimensional analyses in the sense that we do move both parameters $(\ell,\xi)$ but we do so in the neighborhood of a curve reaching the four-dimensional crossing with an asymptotic direction (thus not spiraling). This is the basic reason why we may benefit from both the freedom to move all parameters but maintain regularity, consistently with the general fact that regularity always holds in polar coordinates centered on the crossing.

\medskip

During the final phase of preparation of the present contribution the preprint \cite{CNS24} was released. Its goal is to extend \cite{creedon2023proof} from the deep-water case to the finite-depth case and in informal words its main upshot is that except possibly for a finite number of critical wavelengths small-amplitude Stokes waves are exponentially  unstable, a result overlapping with Theorem~\ref{th:main}. Yet the detailed spectral instability is dramatically different from the one we study here; see Theorem~\ref{th:final}. Indeed
\begin{enumerate}
\item The instability growth rate of \cite{CNS24} scales as the cube of the amplitude parameter.
\item It arises from a frozen Floquet analysis, thus through a one-parameter spectral study.
\end{enumerate}
The proof of \cite{CNS24} proceeds by deriving an instability index depending analytically on the spatial wavelength and checks that the index does not vanish in both the small wavelength and large wavelength regimes. For the reasons sketched above, to motivate our search for a double eigenvalue with wave numbers of eigenvectors differing by twice the fundamental wave number, the computations of \cite{CNS24} are so cumbersome that they are carried out with a symbolic software and result in an extremely heavy index. The comparison of \cite{creedon2023proof} with \cite{CNS24} strikingly illustrates that the finite-depth case is computationally much more demanding than the deep-water case. 

\medskip

Let us now list what we believe to be the most natural set of remaining open problems on the mathematical analysis of the finite-depth Stokes waves.
\begin{enumerate}
\item We believe that it would still be interesting to prove a two-dimensional instability, possibly but not necessarily by an interval arithmetic evaluation of the index of \cite{HY2023}.
\item The rigorous study of the spectral stability of large-amplitude Stokes waves is still largely open and any progress in this direction would be welcome.
\item It would be interesting to obtain a nonlinear instability. Here one may expect to extract some insights from the Benjamin-Feir deep-water nonlinear analysis of \cite{chen-su}. Let us however warn
the reader that as is unfortunately customary\footnote{The only exception we know about is \cite{DR}.} in the field the instability result proved there is of orbital type, and not of space-modulated type, whereas the latter is now known to be the sharp\footnote{See the parabolic nonlinear analysis in \cite{JNRZ-inventiones} and the linearized Hamiltonian analyses in \cite{R-linKDV,ARS}.} notion of stability for periodic waves. 
\end{enumerate}

\medskip

The rest of the paper is organized as follows. In the following section we gather some standard background, about the Zakharov/Craig-Sulem formulation, the existence of small-amplitude Stokes waves and our functional-analytic framework including considerations about Fourier/Floquet-Bloch symbols. Then we prove the existence of the aforementioned curve $(\ell,\xi_+(\ell))$ along which a double eigenvalue at zero-amplitude occurs. In the final and main section we derive the instability index $\Gamma_\ell$ and study its long-wave expansion.

\medskip

\noindent\emph{Acknowledgment.} L.M.R. and C.S. thank the Chinese Academy of Sciences and Z.Y. thanks the University of Rennes for the hospitality during the preparation of the present contribution.

\vspace{1em}

\noindent \emph{Data statement:} Data sharing not applicable to this article.

\noindent \emph{Conflict of interests:} None of the authors is in a situation which gives rise to a conflict of interest.

\vspace{1em}

\section{Preliminary background}

\subsection{The water wave equations}

In the original Eulerian formulation the water wave equations prescribe a dynamics along time $t$ for a fluid velocity $u(\cdot,t)\in\R^3$ defined on a free-boundary spatial domain 
\[
\cD_{\eta(\cdot,t)}:=\{(x,y,z)\in\R^3\,;\,0<z<\eta(x,y,t)\}
\]
and the surface elevation $\eta(\cdot,t)>0$ parameterizing the domain and defined on $\R^2$. Interior equations (in $\cD_{\eta(\cdot,t)}$) are the incompressible Euler equations with gravity force, whereas boundary conditions prescribe the non-penetration condition at the bottom $z=0$, that the fluid pressure equals the atmosphere pressure (assumed to be constant) at the top $z=\eta(x,y,t)$ and that the free-surface moves with fluid particles.

Restricting to curl-free\footnote{Proving instability for the curl-free system is a stronger result than proving it for the full system.} velocities, the no-penetration, curl-free and divergence-free conditions are equivalent to the existence of a $\phi(\cdot,t)$ such that $\phi(\cdot,t)$ is harmonic in $\cD_{\eta(\cdot,t)}$, satisfies the homogeneous Neumann condition at the bottom and $u(\cdot,t)=\nabla \phi(\cdot,t)$ with $\nabla=(\partial_x,\partial_y,\partial_z)$. Then the Euler equation is solved by prescribing the pressure field in terms of $\d_t\phi$ and $\nabla \phi$. Finally plugging the foregoing into the two boundary conditions at the top yields an evolution system for surface elevation $\eta$ and the trace $\psi$ at the surface of the velocity potential $\phi$, $\psi(\cdot,t):=\phi(\cdot,\eta(\cdot,t),t)$, known as the Zakharov/Craig-Sulem formulation. Explicitly it reads as 
\be \label{premain}
\left\{\begin{array}{ll}
    \d_t\eta(\cdot,t)&\ds=\ G[\eta(\cdot,t)]\psi(\cdot,t),\\[3pt]
\d_t\psi(\cdot,t)&\ds=\ -g(\eta(\cdot,t)-h)- \frac{1}{2}|\nabla \psi(\cdot,t)|^2+\frac{1}{2}\frac{\big(G[\eta(\cdot,t)]\psi(\cdot,t)+\nabla \eta(\cdot,t) \cdot \nabla \psi(\cdot,t)\big)^2}{1+|\nabla \eta(\cdot,t)|^2},
\end{array}\right.
\ee
where $\nabla=(\partial_x,\partial_y)$, $g>0$ is a gravity constant, $h>0$ is an arbitrary constant of integration and the Dirichlet-Neumann operator $G[\eta]$ is defined by  
% \be \label{DN}
% G[\eta]\psi:=\bp-\nabla\eta\\1\ep\cdot\nabla\phi(\cdot,\eta(\cdot))\,
% \ee
\be \label{DN}
G[\eta(\cdot,t)]\psi(\cdot,t):=\big(\phi_z(\cdot,z,t)-\nabla \eta(\cdot,t) \cdot \nabla \phi(\cdot,z,t)\big)|_{z=\eta(\cdot,t)}
\ee
with $\phi$ the bounded\footnote{Some limitation on the growth at infinity must be imposed to ensure uniqueness.} solution to
\begin{align}\label{eqDN}
\Delta \phi(\cdot,t)&=0\ \textrm{ in }\cD_\eta\,,&
\d_z\phi(\cdot,0,t)&=0\,,&
\phi(\cdot,\eta(\cdot,t),t)&=\psi(\cdot,t)\,.
\end{align}

Note that some arbitrary choice has been made here, manifested by the presence of a reference height $h>0$. Indeed one could add any function of time to the velocity potential without changing the Eulerian solution. Our choice enforces that the steady constant Eulerian solution $(\eta,u)\equiv (h,0)$ corresponds to a steady constant solution to \eqref{premain}, namely $(\eta,\psi)\equiv(h,0)$. On a related note we observe that our instability result is sufficiently precise to track that it does correspond to a genuine Eulerian instability and not to a spurious instability due to the reformulation.

On properties of the Dirichlet-Neumann operator we refer to \cite[Chapter~3 and Appendix~A]{Lannes}. We only make use of properties that do not use spatial decay or periodicity in a fundamental way, unlike those used in low-regularity framework. Indeed the properties we use also hold in uniformly local Sobolev spaces, as sketched in \cite{Lannes} and proved in \cite{Alazard-Burq-Zuily_non-localized}. The regularity threshold for such robust framework is the one providing embeddings in Lipschitz\footnote{Actually even in $BUC^1$, the space of uniformly $C^1$ functions that are bounded and Lipschitz.} spaces. This ensures that the elliptic problem \eqref{eqDN} is set on a Lipschitz\footnote{Actually uniformly $\cC^1$.} domain that may be uniformly straightened out. To illustrate the properties we shall use note for instance, see \cite[Theorem~A.11]{Lannes}, that for any $s_0>2$, $0\leq s\leq s_0$ and $\psi\in H^s(\R^2)$ the map $G[\cdot]\psi$ is analytic from $h+H^{s_0}(\R^2)$ to $H^{s-1}(\R^2)$. Similar statements hold when one replaces $\R^2$ with $\R^2/\Lambda$, $\Lambda$ being a closed subgroup of $\R^2$, changes in the proof being mostly notational (requiring to replace some Fourier transforms with Fourier series or to omit some variables).

\subsection{Wave profiles}

A Stokes wave is a plane periodic wave of the water-wave equations, namely a solution to \eqref{premain} of the form $(x,y,t)\mapsto (\underline{\eta},\underline{\psi})(\underline{K}\cdot((x,y)-t\,\underline{c}))$ with wavevector $\underline{K}\in\R^2$, phase velocity\footnote{Only the longitudinal component of the velocity $\underline{K}\cdot\underline{c}$ is uniquely defined.} $\underline{c}\in\R^2$ and $2\pi$-periodic one-dimensional profile $(\underline{\eta},\underline{\psi})$.

It is convenient to use symmetries of the equations to reduce the number of parameters. Since the system is invariant by rotation in $(x,y)$ we may enforce $\underline{K}=\kappa\,(1,0)$ and $\underline{c}=c\,(1,0)$ with $\kappa>0$ and $c\geq0$. It is straightforward to rule out the possibility that $c=0$ for non constant waves so that we may also scale $(h,c,g)$ into a single parameter by replacing $(\eta,\psi,x,y,t)$ with $(\tilde{\eta},\tilde{\psi},\tilde{x},\tilde{y},\tilde{t})$ defined through $(\tilde{x},\tilde{y},\tilde{t}):=((x-c\,t)/h,y/h,c\,t/h)$ and
\begin{align*}
(\tilde{\eta},\tilde{\psi})(\tilde{x},\tilde{y},\tilde{t})
:=\left(\frac{\eta-h}{h},\frac{\psi}{c\,h}\right)\left(\frac{x-ct}{h},\frac{y}{h},\frac{c\,t}{h}\right)\,.
\end{align*}
Dropping tildes and denoting 
\begin{equation}\label{def:mu}
\mu:=\frac{gh}{c^2},
\end{equation}
the inverse of the Froude number we obtain
\ba \label{main-eq-dimensionless}
\left\{\begin{array}{ll}
\d_t\eta(\cdot,t)&\ds=\ \d_x\eta(\cdot,t)+G[1+\eta(\cdot,t)]\psi(\cdot,t)\,,\\[3pt]
\d_t\psi(\cdot,t)&\ds=\ \d_x\psi(\cdot,t)-\mu\,\eta(\cdot,t)- \frac{1}{2}|\nabla\psi(\cdot,t)|^2\\[3pt]
&\ds\qquad+\frac{1}{2}\frac{\left(G[1+\eta(\cdot,t)]\psi(\cdot,t)+\nabla\eta(\cdot,t)\cdot\nabla\psi(\cdot,t)\right)^2}{1+|\nabla\eta(\cdot,t)|^2}\,.
\end{array}\right.
\ea 
We stress that $\mu$ plays the role of a wave parameter (unlike $g$), replacing both $h$ and $c$. We warn the reader that if we were planning to perform a modulational analysis (as in \cite{Audiard-Rodrigues}) freezing the direction and blurring the genuine role of wave parameters would certainly be an inconvenient choice. 

Stokes waves are then obtained by determining $(\underline{\eta},\underline{\psi})$ periodic and one-dimensional and $\underline{\mu}>0$ solving
\ba \label{profiles}
\left\{\begin{array}{ll}
\underline{\eta}'+G[1]\underline{\psi}&\ds=\ -(G[1+\underline{\eta}]-G[1])\underline{\psi}\,,\\[3pt]
\underline{\psi}'-\underline{\mu}\,\underline{\eta}&\ds=\ \frac{1}{2}(\underline{\psi}')^2
-\frac{1}{2}\frac{\left(G[1+\underline{\eta}]\underline{\psi}+\underline{\eta'}\,\underline{\psi}'\,\right)^2}{1+(\underline{\eta}')^2}\,.
\end{array}\right.
\ea 
Since restricted to one-dimensional functions
\begin{align*}
G[1]&=D\tah(D)\,,&\textrm{with }D&:=-\iD\frac{\dif }{\dif x}\,,
\end{align*}
when seeking for solutions to \eqref{profiles} with $(\underline{\eta},\underline{\psi})$ small and $2\pi/\kappa$-periodic it is natural to search for $\underline{\mu}$ near 
\[
\mu_0\,:=\,\frac{\kappa}{\tah(\kappa)}\,.
\]
We point out that above and elsewhere we employ $\tah$, $\sih$ and $\coh$ to denote hyperbolic functions,
\begin{align*}
\sih(\cdot)&=\sinh(\cdot),&\coh(\cdot)&=\cosh(\cdot),&\tah(\cdot)&=\tanh(\cdot).
\end{align*}
Using the aforementionned regularity properties of the Dirichlet-Neumann operator transforms the proof of the existence of small-amplitude Stokes waves, stated below, into a simple instance of the Lyapunov-Schmidt method. We point out however that (a form of) the result was proved many decades before such tools were made available. Indeed it was first proved in \cite{Struik-exis-periodic} through a complex-analytic formulation \emph{\`a la }Levi-Civita. A Lyapunov-Schmidt-type proof is provided in \cite{BM-BF}.

For the sake of concision, we denote henceforth $H^s_{{\rm per},\kappa}:=H^s(\R/(2\pi\kappa^{-1}\Z))$.

\begin{proposition}[Stokes expansion]\label{p:Stokes}
Let $\kappa>0$ and $s_0>3/2$. There exist $\eps_0>0$, $K_0$ and an analytic map $\eps\mapsto (\underline{\eta}_\eps,\underline{\psi}_\eps,\underline{\mu}_\eps)$ from $(-\eps_0,\eps_0)$ to $H^{s_0}_{{\rm per},\kappa}\times H^{s_0}_{{\rm per},\kappa}\times \R_+^*$ such that for any $|\eps|<\eps_0$, $(\underline{\eta}_\eps,\underline{\psi}_\eps,\underline{\mu}_\eps)$ is the unique solution to \eqref{profiles} such that
\[
\|(\underline{\eta}_\eps,\underline{\psi}_\eps)-\eps\,(\sih(\kappa)\cos(\kappa\,\cdot\,),\coh(\kappa)\sin(\kappa\,\cdot\,))\|_{H^{s_0}_{{\rm per},\kappa}}+\left|\underline{\mu}_\eps-\frac{\kappa}{\tah(\kappa)}\right|
\leq K_0\,\eps^2\,.
\]
Moreover for $|\eps|<\eps_0$, $\underline{\eta}_\eps$ is even and $\underline{\psi}_\eps$ is odd and for any $s\geq0$,  the map $\eps\mapsto (\underline{\eta}_\eps,\underline{\psi}_\eps,\underline{\mu}_\eps)$ is also analytic from $(-\eps_0,\eps_0)$ to $H^s_{{\rm per},\kappa}\times H^s_{{\rm per},\kappa}\times \R_+^*$ with expansions starting as
\begin{align}\label{Stokes_expansion}
\underline{\eta}_{\eps}&=\eps\,\eta_1+\eps^2\,\eta_2+\cO(\eps^3)\,,& 
\underline{\psi}_{\eps}&=\eps\,\psi_1+\eps^2\,\psi_2+\cO(\eps^3)\,,&
\underline{\mu}_\eps&=\mu_0+\eps^2\,\mu_2+\cO(\eps^3)\,,
\end{align}
with
\ba \label{etapsi} 
 \eta_1(x)&=\sih(\kappa)\cos(\kappa x)\,,&\eta_2(x)&=-\frac{\kappa\tah(\kappa)}{4}+\frac{\kappa(2 + \coh(2\kappa))}{4\tah(\kappa)}\cos(2\kappa x)\,, \\
\psi_1(x)&= \coh(\kappa)\sin(\kappa x)\,, &\psi_2(x)&=\frac{\kappa(\coh(2\kappa) + \coh^2(2\kappa) + 1)}{ 4(\coh(2\kappa)-1)}
\sin(2\kappa x)\,,
\ea 
and
\begin{align}\label{muj}
\mu_0&=\frac{\kappa}{\tah(\kappa)}\,,&\mu_2&=-\frac{\kappa^3(15\sih^2 (\kappa)+ 16\sih^4 (\kappa)+ 8\sih^6(\kappa) + 9)}{8\coh(\kappa)\sih^3(\kappa) }\,.
\end{align}
\end{proposition}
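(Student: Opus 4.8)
The plan is to set up the profile system \eqref{profiles} as a bifurcation problem at $\underline{\mu}=\mu_0$ and run a Lyapunov--Schmidt reduction, using the reflection symmetry $x\mapsto-x$ and the half-period translation $x\mapsto x+\pi/\kappa$ to bring the problem down to a scalar equation whose analysis is transparent.

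First I would record the parity structure. If $\underline{\eta}$ is even and $\underline{\psi}$ is odd, then the left-hand side of the first equation in \eqref{profiles} is odd and that of the second is even (the Dirichlet--Neumann contributions being handled by invariance of the elliptic problem \eqref{eqDN} under $x\mapsto-x$). So let $\Phi(\underline{\eta},\underline{\psi},\underline{\mu})$ denote the difference of the two sides of \eqref{profiles}; by the analyticity properties of $G[\cdot]$ recalled above it is an analytic map from a neighborhood of the origin in (even $H^{s_0}_{{\rm per},\kappa}$)$\times$(odd $H^{s_0}_{{\rm per},\kappa}$)$\times\R$ into (odd $H^{s_0-1}_{{\rm per},\kappa}$)$\times$(even $H^{s_0-1}_{{\rm per},\kappa}$). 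Its $(\underline{\eta},\underline{\psi})$-differential at $(0,0,\mu_0)$ is $\cL_0(\eta,\psi):=(\eta'+G[1]\psi,\ \psi'-\mu_0\eta)$; since $G[1]=D\tah(D)$ on one-dimensional functions, $\cL_0$ is block-diagonal over Fourier modes $n\geq0$, with blocks of determinant $n\kappa(\mu_0\tah(n\kappa)-n\kappa)$, which vanishes only at $n=1$ because $\mu_0=\kappa/\tah(\kappa)$ and $\tah$ is strictly concave on $(0,\infty)$ (so $\tah(n\kappa)<n\,\tah(\kappa)$ for $n\geq2$). Hence $\cL_0$ is Fredholm of index $0$, with kernel spanned by $(\eta_1,\psi_1)=(\sih(\kappa)\cos(\kappa\,\cdot\,),\coh(\kappa)\sin(\kappa\,\cdot\,))$ and cokernel spanned by $\chi:=(\coh(\kappa)\sin(\kappa\,\cdot\,),-\sih(\kappa)\cos(\kappa\,\cdot\,))$ (both verified by a one-line mode-$1$ computation using $\mu_0\tah(\kappa)=\kappa$), and on the complement of the constant and first-harmonic modes $\cL_0$ gains one derivative, so that $\cL_0^{-1}$ maps $H^{s-1}_{{\rm per},\kappa}$ into $H^s_{{\rm per},\kappa}$ on its range.

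Second, the reduction. Choose complements $X$ of $\Span(\eta_1,\psi_1)$ in the domain and $\Span(\chi)$ of $\mathrm{Range}(\cL_0)$ in the target, both invariant under the half-period shift $S\colon f(\cdot)\mapsto f(\cdot+\pi/\kappa)$ (which acts as $(-1)^n$ on mode $n$). Writing $\underline{\eta}=\eps\,\eta_1+\tilde\eta$, $\underline{\psi}=\eps\,\psi_1+\tilde\psi$ with $(\tilde\eta,\tilde\psi)\in X$, split $\Phi=0$ into its $\mathrm{Range}(\cL_0)$-component and its $\Span(\chi)$-component. The analytic implicit function theorem (applicable thanks to analyticity of $\Phi$ and the derivative gain of $\cL_0^{-1}$) solves the first component for $(\tilde\eta,\tilde\psi)=W(\eps,\underline{\mu})=\cO(\eps^2+\eps|\underline{\mu}-\mu_0|)$, analytic near $(0,\mu_0)$, leaving the scalar bifurcation equation $g(\eps,\underline{\mu}):=\langle\Phi(\eps\,(\eta_1,\psi_1)+W(\eps,\underline{\mu}),\underline{\mu}),\chi\rangle=0$. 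Two observations finish this step: (i) $g(\eps,\underline{\mu})=\eps\,\gamma(\eps^2,\underline{\mu})$ for an analytic $\gamma$ --- the factor $\eps$ because $\d_\eps g(0,\underline{\mu})=\langle\cL_0(\eta_1,\psi_1),\chi\rangle=0$, and the $\eps^2$-dependence because $S$ reverses the sign of both $(\eta_1,\psi_1)$ and $\chi$ while commuting with $\Phi$, forcing $g(-\eps,\underline{\mu})=-g(\eps,\underline{\mu})$; and (ii) $\d_{\underline{\mu}}\gamma(0,\mu_0)\neq0$ because $\d_{\underline{\mu}}\Phi=(0,-\underline{\eta})$ contributes, at leading order in $\eps$, the term $\langle(0,-\eta_1),\chi\rangle=\sih^2(\kappa)\,\|\cos(\kappa\,\cdot\,)\|_{L^2}^2\neq0$. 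The implicit function theorem then yields a unique analytic branch $\eps\mapsto\underline{\mu}_\eps=\mu_0+\cO(\eps^2)$ (in particular $\underline{\mu}_\eps>0$ for small $\eps$), hence the unique solution $(\underline{\eta}_\eps,\underline{\psi}_\eps,\underline{\mu}_\eps)$ in a small tube; it is even in $\underline{\eta}$ and odd in $\underline{\psi}$ by construction, and a standard elliptic bootstrap, using once more the derivative gain of $\cL_0^{-1}$, upgrades analyticity to every $H^s_{{\rm per},\kappa}$.

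Finally, the explicit formulas \eqref{etapsi}--\eqref{muj} are obtained by inserting $\underline{\eta}_\eps=\eps\eta_1+\eps^2\eta_2+\cO(\eps^3)$, $\underline{\psi}_\eps=\eps\psi_1+\eps^2\psi_2+\cO(\eps^3)$, $\underline{\mu}_\eps=\mu_0+\eps^2\mu_2+\cO(\eps^3)$ into \eqref{profiles} and matching powers of $\eps$: at order $\eps^2$ one gets $\cL_0(\eta_2,\psi_2)$ equal to an explicit quadratic expression in $(\eta_1,\psi_1)$ carrying only constant and second-harmonic modes --- hence lying in $\mathrm{Range}(\cL_0)$ --- which is inverted mode by mode once the first shape-derivative formula $\mathrm{d}G[1]\,\zeta\,\psi=-G[1](\zeta\,G[1]\psi)-\d_x(\zeta\,\d_x\psi)$ is used; at order $\eps^3$ the solvability condition (pairing with $\chi$) fixes $\mu_2$, and this additionally requires the second shape derivative of $G$. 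I expect this last computational layer --- Taylor-expanding the Dirichlet--Neumann operator to second order in $\underline{\eta}$ and pushing the cubic-order bookkeeping through to the closed forms, in particular the more involved expression for $\mu_2$ in \eqref{muj} --- to be the only real work; everything upstream is soft, and for existence, uniqueness and analyticity alone one may instead simply invoke the Lyapunov--Schmidt argument of \cite{BM-BF} (or the older complex-analytic construction of \cite{Struik-exis-periodic}).
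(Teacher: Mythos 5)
Your Lyapunov--Schmidt argument is correct and is precisely the route the paper indicates: the paper gives no proof of this proposition, stating only that the regularity of the Dirichlet--Neumann operator turns the existence into ``a simple instance of the Lyapunov--Schmidt method'' (citing \cite{BM-BF}) and deferring the coefficient computation to \cite[Appendix~B]{BMV-BF} --- exactly the two layers you outline. The checkable details --- the mode-$n$ block determinant $n\kappa(\mu_0\tah(n\kappa)-n\kappa)$ with its unique zero at $n=1$ by strict concavity of $\tah$, the kernel/cokernel pair, the transversality coefficient $\sih^2(\kappa)\|\cos(\kappa\,\cdot\,)\|_{L^2}^2$, and the half-period-shift symmetry forcing $g$ to be odd in $\eps$ hence $\underline{\mu}_\eps$ even --- all check out, the sole (standard) caveat being that your uniqueness is established within the even-$\underline{\eta}$/odd-$\underline{\psi}$ class, i.e.\ after the phase has been fixed by the reflection symmetry.
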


The explicit computation of coefficients in Proposition~\ref{p:Stokes} (assuming the existence) preceded even the pioneering work \cite{Struik-exis-periodic} since it is essentially due to Stokes himself. It is relatively straightforward; see for instance \cite[Appendix~B]{BMV-BF}. 

Incidentally we point out that the Dirichlet-Neumann operator is also known to have similar smoothness properties on spaces of analytic functions (see \cite{Alazard-Burq-Zuily_analytic}) so that a similar result is expected to hold also in analytic spatial regularity. Details of the latter have been worked out in the deep-water case \cite{BMV_analytic}.

We do not mark the dependence on the wave number $\kappa$ since it plays a rather passive role in our analysis. %From now on, to alleviate notation we shall also drop underlining signs on wave quantities, leaving only $\eps$-subscripts to mark that we are dealing with background wave quantities. We warm the reader 

\subsection{Fourier/Floquet-Bloch symbols}

\subsubsection*{Linear generator}

By using again the expansions of the Dirichlet-Neumann operator \cite[Theorem 3.2.1]{Lannes}, one obtains the linearization of \eqref{main-eq-dimensionless} with $\mu=\mu_\eps$ about $(\underline{\eta}_\eps,\underline{\psi}_\eps)$
\ba \label{linearization1}   
\left\{ \begin{array}{l}
 \d_t \eta=\d_x \big((1-\underline{V}_\eps)\eta\big)-G[1+\underline{\eta}_\eps](\underline{B}_\eps \eta)+G[1+\underline{\eta}_\eps]\psi, \\[5pt]
 \d_t \psi=-\big(\underline{\mu}_\eps+\underline{B}_\eps\,\underline{V}_\eps'\big)\eta-\underline{B}_\eps G[1+\underline{\eta}_\eps](\underline{B}_\eps\,\eta)
 +(1-\underline{V}_\eps)\d_x\psi+\underline{B}_\eps G[1+\underline{\eta}_\eps]\psi.
\end{array}\right.
\ea
where 
\begin{align} \label{shapederivative}
\underline{B}_\eps&:=\frac{G[1+\underline{\eta}_\eps]\underline{\psi}_\eps+\underline{\eta}_\eps'\,\underline{\psi}_\eps'}{1+(\underline{\eta}_\eps')^2}=\frac{\underline{\psi}_\eps'-1}{1+(\underline{\eta}_\eps')^2}\,\underline{\eta}_\eps'\,,&
\underline{V}_\eps&:=-\underline{B}_\eps\,\underline{\eta}_\eps'+\underline{\psi}_\eps'\,. 
\end{align}

At this stage, as in the local-wellposedness theory, in order to simplify \eqref{linearization1} and untangle its regularity structure it is expedient to introduce good unknowns \emph{à la }Alinhac, replacing $(\eta,\psi)$ with $U:=(\eta,\psi-\underline{B}_\eps\,\eta)$. Then system~\eqref{linearization1} becomes $(\d_t -L^{\eps})U=0$ with
\be \label{linearsys}
L^{\eps}= \bp
    \d_x \left((1-\underline{V}_\eps)\,\cdot\,\right)& G[1+\underline{\eta}_\eps] \\
    -\underline{\mu}_\eps+(1-\underline{V}_\eps)\underline{B}_\eps'&(1-\underline{V}_\eps)\,\d_x
\ep\,.
\ee
The linear part of the analysis in \cite[Chapter~4]{Lannes} shows that endowed with its maximal domain $L^\eps$ generates a strongly continuous semigroup of bounded operators on $H^s(\R^2)\times H^{s+\frac12}(\R^2)$ for any $s\geq 0$.

For the sake of concreteness we make the choice to consider $L^\eps$ as an operator on $L^2(\R^2)\times H^{\frac12}(\R^2)$ with domain $D_\eps$ thus given as the set of $U\in L^2(\R^2)\times H^{\frac12}(\R^2)$ such that $L^\eps U\,\in\,L^2(\R^2)\times H^{\frac12}(\R^2)$. Yet the instability we prove holds for any of the choices aforementionned. As expected the operator $L^\eps$ displays the Hamiltonian symmetry
\[
L^\eps\,=\,J\,A^\eps
\]
with 
\begin{align}\label{Ham}
J&:=\bp 0&-1\\1&0\ep\,,&
A^\eps&:=\bp
    -\underline{\mu}_\eps+(1-\underline{V}_\eps)\underline{B}_\eps'&(1-\underline{V}_\eps)\,\d_x\\
    -\d_x \left((1-\underline{V}_\eps)\,\cdot\,\right)& -G[1+\underline{\eta}_\eps]
\ep\,,
\end{align}
$J$ being invertible and skew-symmetric, $A^\eps$ being symmetric.

Providing a more explicit form of $D_\eps$ would be an unwieldy task, because of the anisotropic regularity structure of $L^\eps$. Fortunately making it more explicit would be of no particular help here.

Given the form \eqref{linearsys} it is expedient to introduce
\be \label{a}
\underline{a}_\eps:=\underline{\mu}_\eps-\mu_0-(1-\underline{V}_\eps)\underline{B}_\eps'
\ee
and convenient to extract from Proposition~\ref{p:Stokes} expansions for $(\underline{V}_\eps,\underline{a}_\eps)$.

\begin{lemma}\label{lem-expa}
With notation from Proposition~\ref{p:Stokes} and definitions~\eqref{shapederivative}-\eqref{a}, 
\begin{enumerate}
\item for any $|\eps|<\eps_0$, $(\underline{V}_\eps,\underline{a}_\eps)$ is an even function;
\item for any $s\geq0$, $\eps\mapsto (\underline{V}_\eps,\underline{a}_\eps)$ is an analytic function from $(-\eps_0,\eps_0)$ to $H^s_{{\rm per},\kappa}\times H^s_{{\rm per},\kappa}$ with expansions starting as
\begin{align*}
 \underline{V}_\eps&=\eps V_1+\eps^2 V_2+\cO(\eps^3)\,,&
 \underline{a}_\eps&=\eps a_1+\eps^2 a_2+\cO(\eps^3)\,,
\end{align*}
where
\begin{align*}
V_1(x)&=\kappa\coh(\kappa) \cos(\kappa x)\,,&
V_2(x)&=\kappa^2\frac{\sih^2(\kappa)}{2}+\kappa^2\frac{2 \sih^4(\kappa)+6 \sih^2(\kappa)+3}{4\sih^2(\kappa)}\cos(2\kappa x)\,,\\
%\label{a_expansion}
a_1(x)&=-\kappa^2 \sih(\kappa) \cos(\kappa x)\,,&
a_2(x)&=-\kappa^3\frac{4\coh^6(\kappa)+3\coh^2(\kappa)+2}{8\coh(\kappa)\sih^3(\kappa)}-\kappa^3\frac{\coh^2(\kappa)+5}{2 \tah(\kappa)} \cos(2\kappa x)\,.
\end{align*}
\end{enumerate}
\end{lemma}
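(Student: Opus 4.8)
The plan is to obtain the expansions of $\underline{V}_\eps$ and $\underline{a}_\eps$ by plugging the Stokes expansions of Proposition~\ref{p:Stokes} into definitions~\eqref{shapederivative}-\eqref{a} and identifying powers of $\eps$, using the analyticity and composition properties of the Dirichlet-Neumann operator recalled in the previous subsection. The evenness statement in item~(1) follows from a symmetry/parity bookkeeping: since $\underline{\eta}_\eps$ is even and $\underline{\psi}_\eps$ is odd, $\underline{\eta}_\eps'$ is odd, $\underline{\psi}_\eps'$ is even, and $G[1+\underline{\eta}_\eps]\underline{\psi}_\eps$ is odd (the Dirichlet-Neumann operator intertwines the reflection $x\mapsto-x$ and preserves parity); hence $\underline{B}_\eps=(\underline{\psi}_\eps'-1)\underline{\eta}_\eps'/(1+(\underline{\eta}_\eps')^2)$ is odd, so $\underline{B}_\eps'$ is even and $\underline{V}_\eps=-\underline{B}_\eps\,\underline{\eta}_\eps'+\underline{\psi}_\eps'$ is even, and then $\underline{a}_\eps=\underline{\mu}_\eps-\mu_0-(1-\underline{V}_\eps)\underline{B}_\eps'$ is even. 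Item~(2), analyticity into every $H^s_{{\rm per},\kappa}$, is immediate from the analyticity of $\eps\mapsto(\underline{\eta}_\eps,\underline{\psi}_\eps,\underline{\mu}_\eps)$ into all such spaces (Proposition~\ref{p:Stokes}), the analyticity of $\eta\mapsto G[1+\eta]$, the fact that $H^s_{{\rm per},\kappa}$ is an algebra for $s>1/2$ and stable under division by functions bounded away from zero, plus an elementary bootstrap in $s$.

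The substance is then the computation of the first two Taylor coefficients. First I would record, from \eqref{etapsi}, that $\underline{\eta}_\eps'=-\eps\,\kappa\sih(\kappa)\sin(\kappa x)+\cO(\eps^2)$ and $\underline{\psi}_\eps'=\eps\,\kappa\coh(\kappa)\cos(\kappa x)+\cO(\eps^2)$, so that $\underline{B}_\eps=-\underline{\eta}_\eps'+\cO(\eps^2)=\eps\,\kappa\sih(\kappa)\sin(\kappa x)+\cO(\eps^2)$ at leading order, whence $\underline{B}_\eps'=\eps\,\kappa^2\sih(\kappa)\cos(\kappa x)+\cO(\eps^2)$ and $\underline{V}_\eps=\underline{\psi}_\eps'+\cO(\eps^2)=\eps\,\kappa\coh(\kappa)\cos(\kappa x)+\cO(\eps^2)$, giving $V_1$ and $a_1=\underline{\mu}_\eps-\mu_0-\underline{B}_\eps'+\cO(\eps^2)$ at order $\eps$, i.e. $a_1=-\kappa^2\sih(\kappa)\cos(\kappa x)$ since $\underline{\mu}_\eps-\mu_0=\cO(\eps^2)$. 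For the $\eps^2$ coefficients one needs the second-order terms: the $\cO(\eps^2)$ part of $\underline{B}_\eps$ requires $G[1+\underline{\eta}_\eps]\underline{\psi}_\eps$ expanded to order $\eps^2$ via the shape-derivative formula $G[1+\underline{\eta}_\eps]=G[1]+\eps\,\dot G[\eta_1]+\cO(\eps^2)$ applied to $\underline{\psi}_\eps=\eps\psi_1+\eps^2\psi_2+\cO(\eps^3)$, together with the expansion of the denominator $1/(1+(\underline{\eta}_\eps')^2)=1-\eps^2(\eta_1')^2+\cO(\eps^3)$. Collecting the $\eps^2$-terms, simplifying the resulting products of $\cos$ and $\sin$ via product-to-sum formulas, and using the closed forms in \eqref{etapsi}-\eqref{muj}, one reads off $V_2$ and then $a_2=\mu_2-\big((1-\underline{V}_\eps)\underline{B}_\eps'\big)_{\eps^2}$.

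The main obstacle is purely computational: getting the $\eps^2$ coefficient of $G[1+\underline{\eta}_\eps]\underline{\psi}_\eps$ correct, since it mixes the first shape derivative of the Dirichlet-Neumann operator acting on the first-order potential with the zeroth-order operator acting on the second-order potential, and then correctly reducing everything to the form ``constant plus $\cos(2\kappa x)$'' after dividing by $1+(\underline{\eta}_\eps')^2$ and multiplying by $1-\underline{V}_\eps$. Since restricted to one-dimensional functions $G[1]=D\tah(D)$ acts diagonally on $\cos(n\kappa\,\cdot)$ with symbol $n\kappa\tah(n\kappa)$, and $\dot G[\eta_1]$ can be written explicitly (it is a first-order operator whose coefficients are built from $\eta_1$, $G[1]\eta_1$ and derivatives), the computation is entirely elementary trigonometry once these ingredients are in hand; the only care needed is algebraic bookkeeping of hyperbolic-function identities to bring the answer to the compact form displayed. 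This is a routine but lengthy verification, and a cross-check against the known Stokes coefficients \eqref{Stokes_expansion}-\eqref{muj} (which themselves come from the same expansion of \eqref{profiles}) provides a consistency test.
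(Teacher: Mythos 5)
Your proposal is correct and is essentially the intended argument: the paper states the lemma without proof precisely because it is the routine extraction of the first two Taylor coefficients from Proposition~\ref{p:Stokes} via definitions~\eqref{shapederivative}--\eqref{a}, together with the parity bookkeeping you describe, and your formulas do reproduce $V_1,V_2,a_1,a_2$ (e.g.\ $V_2=(\eta_1')^2+\psi_2'$ and $a_2=\mu_2-B_2'+V_1B_1'$ with $B_1=-\eta_1'$, $B_2=\psi_1'\eta_1'-\eta_2'$). The only simplification you miss is that no expansion of the Dirichlet--Neumann operator is needed at all: the second expression in \eqref{shapederivative}, $\underline{B}_\eps=(\underline{\psi}_\eps'-1)\,\underline{\eta}_\eps'/(1+(\underline{\eta}_\eps')^2)$, already encodes the profile identity $G[1+\underline{\eta}_\eps]\underline{\psi}_\eps=-\underline{\eta}_\eps'$, so the whole computation reduces to products of the known $\eta_j'$, $\psi_j'$ and elementary trigonometric/hyperbolic identities, bypassing the shape-derivative term $\dot G[\eta_1]\psi_1$ entirely.
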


\subsubsection*{The Fourier/Floquet-Bloch transform}

Given the symmetries of the coefficient of $L^\eps$, it is compelling to introduce an integral transform that combines a Fourier transform in the $y$-direction and a Floquet-Bloch transform of period $T:=2\pi\kappa^{-1}$ in the $x$-direction. Explicitly, with\footnote{As usual for Fourier-type transforms formulas directly make sense for Schwartz functions, and are then extended by density and duality, using isometry on $L^2$-based spaces.} $g$ defined on $\R^2$ is associated $\cB(g)$ defined on $\R\times[-\kappa/2,\kappa/2)\times\R/T\Z$ by 
\[
\cB(g)(\ell,\xi,x)\ :=\ 
\sum_{j\in\Z} \eD^{\iD\,j\,\kappa\, x}\ \cF(g)(\xi+j\,\kappa,\ell)\,,
\]
where $\cF(g)$ is the Fourier transform normalized by
\[
\cF(g)(k,\ell)\ :=\ \frac{1}{(2\pi)^2} 
\int_{\R^2} \eD^{-\iD\,k\,x-\iD\ell\,y} g(x,y)\ \dif x\,\dif y\,,
\]
and one recovers $g$ from $\cB(g)$ through
\[
g(x,y)\ =\ \int_{-\frac{\kappa}{2}}^{\frac{\kappa}{2}}\int_{\R} \eD^{\iD\xi x+\iD\ell y}\ \cB(g)(\ell,\xi,x)\ \dif\ell\,\dif\xi\,.
\]
The transform $\cB$ diagonalizes $L^\eps$ in the sense that
\[
\cB(L^\eps U)(\ell,\xi,x)\ =\ L^\eps_{\ell,\xi}(\cB(U)(\ell,\xi,\cdot))(x)
\]
where each $L^\eps_{\ell,\xi}$ acts on functions from $L^2_{{\rm per},\kappa}\times H^{\frac12}_{{\rm per},\kappa}$ with domain $H^1_{{\rm per},\kappa}\times H^{\frac32}_{{\rm per},\kappa}$, through
 \be\label{Lell-xi}
L^{\eps}_{\ell,\xi}=\bp
(\d_x+i\xi)\left((1-\underline{V}_\eps)\,\cdot\,\right)& G_{\ell,\xi}[1+\underline{\eta}_\eps] \\
-(\mu_0+\underline{a}_\eps) &   (1-\underline{V}_\eps) (\d_x+i\xi)
\ep\,.
\ee 
In the foregoing, the Bloch symbol $G_{\ell,\xi}[1+\underline{\eta}_\eps]$ of $G[1+\underline{\eta}_\eps]$ may be suitably characterized as a Dirichlet-Neumann-like operator but for our purposes it is more convenient to use directly the abstract symbol formula 
\begin{align}\label{Gell-xi}
G_{\ell,\xi}[1+\underline{\eta}_\eps](\psi)(x)
\,=\,\eD^{-\iD \xi x-\iD \ell y}\,G[1+\underline{\eta}_\eps]\left((x',y')\mapsto \eD^{\iD \xi x'+\iD \ell y'}\psi(x')\right)(x,y)\,
\end{align}
where the formula is independent of the picked $y$ and $G[1+\underline{\eta}_\eps]$ is here considered as acting on uniformly local Sobolev spaces.
%\[
%G_{\ell,\xi}[\eta]\psi:=\bp-\eta_\eps'\\1\ep\cdot\nabla\phi(\cdot,1+\eta_\eps(\cdot))\,
%\]
%with $\phi$ the $(T,0)$-periodic solution to $\d_z\phi(\cdot,0)=0$,
%$\phi(\cdot,1+\eta_\eps(\cdot))=\psi$,
%\begin{align*}
%((\d_x+\iD\xi)^2-\ell^2+\d_z^2)\,\phi&=0\quad\textrm{ on }\quad
%\{(x,z)\in\R^3\,;\,0<z<\eta(x,t)\}\,,
%\end{align*}
The latter point of view enables us to directly extract properties of $G_{\ell,\xi}[1+\underline{\eta}_\eps]$ from those known for $G[1+\underline{\eta}_\eps]$ seen as an operator on uniformly local Sobolev spaces, since uniformly over bounded sets of $(\xi,\ell)$, for any $s\geq 0$, the linear map
\begin{align*}
H^s_{{\rm per},\kappa}\to H^s_{{\rm ul}}(\R^2)\,,\qquad
\psi\longmapsto \left((x,y)\mapsto \eD^{\iD \xi x+\iD \ell y}\psi(x)\right)
\end{align*} 
is bounded with closed range and bounded inverse on this range. For instance this implies with bounds uniformly over bounded sets of $(\xi,\ell)$, for any nonnegative $s$, $s'$,
\[
\|G_{\ell,\xi}[1+\underline{\eta}_\eps]\|_{H^{s'}_{{\rm per},\kappa}\to H^s_{{\rm per},\kappa}}
\lesssim_{\ell,\xi} \|G[1+\underline{\eta}_\eps]\|_{H^{s'}_{{\rm ul}}(\R^2)\to H^s_{{\rm ul}}(\R^2)}\,.
\]

\subsubsection*{Expansions of the Dirichlet-Neumann symbols}

Combining Proposition~\ref{p:Stokes} with known\footnote{See \cite[Section~3.6.2]{Lannes}. A lower dimensional version is already present in \cite{CS93}. Since the uniformly local versions of results, that we are using here, are only sketched in \cite{Lannes}, let us offer here a quick walkthrough of the required adaptations. The key result to adapt is \cite[Proposition~3.28]{Lannes} and its uniformly local version is obtained starting from the $L^2$-type bound \cite[Corollary~2.46]{Lannes} (replacing  \cite[Corollary~2.40]{Lannes}) or the equivalent result in \cite{Alazard-Burq-Zuily_non-localized} and elaborating with product and commutator estimates of \cite[Appendix~B4]{Lannes}.} expansions of $G$ yields 
\begin{align}\label{expG}
G[1+\underline{\eta}_{\eps}]=G[1]+G^{(1)}[\underline{\eta}_{\eps}]+G^{(2)}[\underline{\eta}_\eps]+\cO(\eps^3),
\end{align}
where $G^{(1)}[\eta]$ is linear in $\eta$ and $G^{(2)}[\eta]$ is quadratic in $\eta$,
\begin{align*}
G[1]&=|D|\tah|D|\,,\\
G^{(1)}[\eta]&:=|D|(\eta-\tah(|D|)\eta \tah(|D|))|D|\,,\\
G^{(2)}[\eta]&:=-\frac{1}{2}|D|\Big(|D|\eta^2 \tah(|D|)+\tah(|D|)\eta^2 |D| -2\tah(|D|) \eta |D|\tah(|D|) \eta \tah(|D|)\Big)|D|\,,
\end{align*}
with $D:=-\iD\nabla$, and $\eta$ identified with the multiplication operator by $\eta$. In expansion~\ref{expG} the error is measured in operator norms and holds for any $s\geq0$ as operators from either $H^{s+1}(\R^2)$ to $H^s(\R^2)$, or from $H^{s+1}_{{\rm per},\kappa}$ to $H^s_{{\rm per},\kappa}$, or $H^{s+1}_{{\rm ul}}(\R^2)$ to $H^s_{{\rm ul}}(\R^2)$, which is the version we are actually using. Incidentally we point out that it is apparent on $G^{(1)}[\eta]$ that, as aforementioned, some regularity on $\eta$ is needed to be able to consider $G[\eta]$ as acting as a first-order operator, regular in $\eta$. Combining \eqref{Gell-xi} with \eqref{expG} (in its uniformly local version) and expansions of $\underline{\eta}_\eps$ yields the following expansions for $G_{\ell,\xi}[1+\underline{\eta}_\eps]$.

\begin{lemma}\label{lem-expG}
With notation from Proposition~\ref{p:Stokes}, for any $s\geq0$ as an operator from $H^{s+1}_{{\rm per},\kappa}$ to $H^s_{{\rm per},\kappa}$ the Bloch symbol $G_{\ell,\xi}[1+\underline{\eta}_\eps]$ is continuous in $\eps$ and expands as
\be\label{expan-DN-eps}
G_{\ell,\xi}[1+\underline{\eta}_\eps]= G_{\ell,\xi}[1]+\eps G_{\ell,\xi}^{(1)}+\eps^2G_{\ell,\xi}^{(2)}+\cO(\eps^3). 
\ee
where 
\begin{align}\label{expansionG}
G_{\ell,\xi}[1]&=|D|_{\ell,\xi}\tah(|D|_{\ell,\xi})\,,&
G_{\ell,\xi}^{(1)}&=\tG_{\ell,\xi}^{(1)}[\eta_1]\,,&
G_{\ell,\xi}^{(2)}&=\tG_{\ell,\xi}^{(1)}[\eta_2]+\tG_{\ell,\xi}^{(2)}[\eta_1]\,,&
\end{align}
with $\eta_1$ and $\eta_2$ as in \eqref{etapsi}, and
\begin{align*}
\tG_{\ell,\xi}^{(1)}[\eta]
&:=\,-G_{\ell,\xi}[1]\,\eta\, G_{\ell,\xi}[1]-(\d_x +\iD\xi)\,\eta \,(\d_x +\iD\xi)+\ell^2 \eta,\\
\tG_{\ell,\xi}^{(2)}[\eta]&:=\,
G_{\ell,\xi}[1]\,\eta\, G_{\ell,\xi}[1]\,\eta\, G_{\ell,\xi}[1]
-\frac12 |D|_{\ell,\xi}^2 \,\eta^2\, G_{\ell,\xi}[1]-\frac12 G_{\ell,\xi}[1]\,\eta^2\,|D|_{\ell,\xi}^2
\end{align*} 
$D=-\iD\frac{\dif }{\dif x}$, $|\cdot|_{\ell,\xi}:=\sqrt{(\cdot+\xi)^2+\ell^2}$, functions being identified with their associated multiplication operator.
\end{lemma}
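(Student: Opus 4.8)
The plan is to obtain \eqref{expan-DN-eps}--\eqref{expansionG} by conjugating the flat‑space expansion \eqref{expG} of $G[1+\underline{\eta}_\eps]$ through the modulation formula \eqref{Gell-xi}, and then substituting the Stokes expansion of $\underline{\eta}_\eps$ from Proposition~\ref{p:Stokes}. Write $\cM^{\pm}_{\xi,\ell}$ for the (mutually inverse) multiplication operators by $\eD^{\pm(\iD\xi x+\iD\ell y)}$, so that \eqref{Gell-xi} reads $G_{\ell,\xi}[1+\underline{\eta}_\eps]=\cM^{-}_{\xi,\ell}\,G[1+\underline{\eta}_\eps]\,\cM^{+}_{\xi,\ell}$ with $G[1+\underline{\eta}_\eps]$ understood on uniformly local Sobolev spaces. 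The conjugation $A\mapsto\cM^{-}_{\xi,\ell}A\,\cM^{+}_{\xi,\ell}$ is multiplicative, hence distributes over the compositions of operators out of which $G[1]$, $G^{(1)}[\,\cdot\,]$, $G^{(2)}[\,\cdot\,]$ are built; and on a $2\pi\kappa^{-1}$-periodic function the conjugate of a constant-coefficient Fourier multiplier with symbol $m$ sends the mode $\eD^{\iD j\kappa x}$ to $m(j\kappa+\xi,\ell)\,\eD^{\iD j\kappa x}$. In particular $\cM^{-}_{\xi,\ell}|D|\,\cM^{+}_{\xi,\ell}=|D|_{\ell,\xi}$, $\cM^{-}_{\xi,\ell}\tah(|D|)\,\cM^{+}_{\xi,\ell}=\tah(|D|_{\ell,\xi})$, $\cM^{-}_{\xi,\ell}|D|^2\,\cM^{+}_{\xi,\ell}=|D|_{\ell,\xi}^2$, $\cM^{-}_{\xi,\ell}\d_x\,\cM^{+}_{\xi,\ell}=\d_x+\iD\xi$ and $\cM^{-}_{\xi,\ell}\d_y\,\cM^{+}_{\xi,\ell}=\iD\ell$, while multiplication operators by functions of the single variable $x$ --- in particular by $\eta_1$, $\eta_2$ and $\eta_1^2$ --- are unchanged. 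Applying these rules factorwise to the coefficients in \eqref{expG} produces exactly the operators $G_{\ell,\xi}[1]$, $\tG^{(1)}_{\ell,\xi}[\,\cdot\,]$, $\tG^{(2)}_{\ell,\xi}[\,\cdot\,]$ of \eqref{expansionG}; the summand $\ell^2\eta$ in $\tG^{(1)}_{\ell,\xi}[\eta]$ appears as the image, under $\d_y\mapsto\iD\ell$, of the transverse divergence piece $-\d_y(\eta\,\d_y)$ of the flat-space first-order operator $-\nabla\cdot(\eta\,\nabla)-G[1]\,\eta\,G[1]$, since $(\iD\ell)\,\eta\,(\iD\ell)=-\ell^2\eta$.

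For the $\eps$-dependence, the point is that the conjugation is bounded and commutes with the expansion. By the boundedness statement recorded right after \eqref{Gell-xi}, the maps $\cM^{\pm}_{\xi,\ell}$ are bounded --- with bounded inverse on their ranges --- between the relevant periodic and uniformly local Sobolev spaces, uniformly for $(\ell,\xi)$ in bounded sets; so conjugating \eqref{expG} yields an expansion of $G_{\ell,\xi}[1+\underline{\eta}_\eps]$ whose $\cO(\eps^3)$ remainder is now measured in operator norm from $H^{s+1}_{{\rm per},\kappa}$ to $H^s_{{\rm per},\kappa}$. It remains to insert $\underline{\eta}_\eps=\eps\,\eta_1+\eps^2\,\eta_2+\cO(\eps^3)$ from Proposition~\ref{p:Stokes}: since $\eta\mapsto G^{(1)}[\eta]$ is linear one has $G^{(1)}[\underline{\eta}_\eps]=\eps\,G^{(1)}[\eta_1]+\eps^2\,G^{(1)}[\eta_2]+\cO(\eps^3)$, and since $\eta\mapsto G^{(2)}[\eta]$ is the diagonal of a bounded symmetric bilinear map one has $G^{(2)}[\underline{\eta}_\eps]=\eps^2\,G^{(2)}[\eta_1]+\cO(\eps^3)$; collecting powers of $\eps$ and conjugating termwise gives $G_{\ell,\xi}[1+\underline{\eta}_\eps]=G_{\ell,\xi}[1]+\eps\,\tG^{(1)}_{\ell,\xi}[\eta_1]+\eps^2\big(\tG^{(1)}_{\ell,\xi}[\eta_2]+\tG^{(2)}_{\ell,\xi}[\eta_1]\big)+\cO(\eps^3)$, which together with \eqref{expansionG} is the claim. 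Continuity in $\eps$ --- in fact analyticity --- follows the same way from the analytic dependence of $\eps\mapsto G[1+\underline{\eta}_\eps]$ on uniformly local scales together with the $(\ell,\xi)$-uniform boundedness of the conjugation.

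Once this functional-analytic framework is in place the argument is essentially formal, so the only genuine point is the bookkeeping just used: that \eqref{expG} is valid with an $\cO(\eps^3)$ operator-norm remainder on uniformly local Sobolev scales, that the Stokes expansion of Proposition~\ref{p:Stokes} holds in periodic Sobolev spaces of all orders, and that the modulation maps are bounded between these scales uniformly for bounded $(\ell,\xi)$ --- all already available from the preceding subsections. On the algebraic side the multiplier identities $\cM^{-}_{\xi,\ell}m(D)\,\cM^{+}_{\xi,\ell}=m(D+\xi,\ell)$ (read on periodic functions, $D=-\iD\d_x$) are immediate Fourier-series computations; the one place where the conjugation is not the plain substitution of $|D|_{\ell,\xi}$ for $|D|$ is in the first-order coefficient, whose divergence-form part $-\nabla\cdot(\eta\,\nabla)=-\d_x(\eta\,\d_x)-\d_y(\eta\,\d_y)$ conjugates to $-(\d_x+\iD\xi)\,\eta\,(\d_x+\iD\xi)+\ell^2\eta$, precisely the non-$\tah$ part of $\tG^{(1)}_{\ell,\xi}[\eta]$.
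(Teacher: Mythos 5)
Your proposal is correct and follows exactly the route the paper itself indicates (the paper offers no detailed proof beyond the sentence ``Combining \eqref{Gell-xi} with \eqref{expG} \dots yields the following expansions''): conjugate the flat-space Craig--Sulem expansion by the modulation $\eD^{\iD\xi x+\iD\ell y}$, use that this conjugation is multiplicative and sends constant-coefficient multipliers $m(D)$ to $m(D+\xi,\ell)$, and insert the Stokes expansion of $\underline{\eta}_\eps$. One point deserves emphasis, and you handled it correctly: the non-smoothing part of the first-order shape derivative must be taken in divergence form $-\nabla\cdot(\eta\,\nabla)=D\cdot\eta\,D$, which conjugates to $-(\d_x+\iD\xi)\,\eta\,(\d_x+\iD\xi)+\ell^2\eta$ as in the lemma, whereas the paper's displayed formula \eqref{expG} literally writes $|D|\,\eta\,|D|$, whose Bloch symbol would instead be $|D|_{\ell,\xi}\,\eta\,|D|_{\ell,\xi}$ --- a different operator; your reading is the standard (and the only internally consistent) one, as one can confirm against the explicit matrix $(L^{[1]})^1_{-1}$ computed later in the paper.
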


Lemma~\ref{lem-expG} provides continuity\footnote{The proof of Lemma~\ref{lem-expG} provides higher order sharp regularity but it is not needed here.} in $\eps$, holding $(\ell,\xi)$ fixed, measured in operator norms with sharp regularity loss. In the following we shall also need joint higher regularity in $(\ell,\xi,\eps)$ but possibly with nonsharp regularity loss. The difference in requirements comes from the fact that in spectral perturbation one needs sharp norm control only when building spectral projector by contour integrals, since perturbation theory fundamentally hinges on norm continuity of rank of projectors. In turn, once spectral projectors are built, the construction of a reduced matrix describing the spectrum inside a fixed contour could start from a (very) smooth basis hence allow to absorb arbitrary finite regularity loss.

Such nonsharp regularity may be obtained, for instance, through conformal coordinates. The outcome of introducing conformal coordinates\footnote{The regularity of conformal mapping may also be deduced from the kind of shape regularity analysis of \cite{Lannes}. For instance one may introduce $\mathfrak{Z}_\eps$ the bounded solution to
\begin{align*}
\Delta \mathfrak{z}_\eps&=0\ \textrm{ in }
\cD^0_{1+\underline{\eta}_\eps}
:=\{(x,z)\in\R^2\,;\,0<z<1+\underline{\eta}_\eps(x)\}
\,,&
\mathfrak{z}_\eps(0)&=0\,,&
\mathfrak{z}_\eps(1+\underline{\eta}_\eps(x))&=1\,,
\end{align*}
and $\mathfrak{X}_\eps$ a harmonic conjugate of $\mathfrak{Z}_\eps$, that is, $\mathfrak{X}_\eps$ satisfies $\nabla\mathfrak{X}_\eps=\nabla^\perp\mathfrak{Z}_\eps$. Then $\mathfrak{h}_\eps$ is chosen to enforce the periodicity of $x\mapsto (1+\mathfrak{h}_\eps)\mathfrak{X}_\eps(x,y)-x$. At last one may set $\mathfrak{x}_\eps(x)=(1+\mathfrak{h}_\eps)\mathfrak{X}_\eps(x,1+\underline{\eta}_\eps(x))-x$.} is that for arbitrary large $s$, there exists an analytic map $(-\eps_0,\eps_0)\to H^s_{{\rm per},\kappa}\times H^s_{{\rm per},\kappa}\times H^s_{{\rm per},\kappa}\times \R$, $\eps\mapsto (\mathfrak{x}_\eps,\mathfrak{v}_\eps,\mathfrak{w}_\eps,\mathfrak{h}_\eps)$ starting from zero at $\eps=0$ such that 
\begin{align}
&G_{\ell,\xi}[1+\underline{\eta}_\eps](\psi)(x) \label{conformal}\\\nonumber
&\,=\,(1+\mathfrak{v}_\eps(x))\eD^{\iD \xi\,\mathfrak{x}_\eps(x)}\,
\left(G^{[\mathfrak{w}_\eps]}_{\ell,\xi}[1+\mathfrak{h}_\eps]\left(\left(x'\mapsto \eD^{-\iD \xi\,\mathfrak{x}_\eps(x')}\psi(x')\right)\circ(\textrm{Id}_\R+\mathfrak{x}_\eps)^{-1}\right)\right)(x+\mathfrak{x}_\eps(x))
\end{align}
where $G^{[\mathfrak{w}_\eps]}_{\ell,\xi}[1+\mathfrak{h}_\eps]$ is defined by  
\[
G^{[\mathfrak{w}_\eps]}_{\ell,\xi}[1+\mathfrak{h}_\eps](\Psi)=\d_z\Phi(\cdot,1+\mathfrak{h}_\eps)
\]
with $\Phi$ the bounded solution to
\begin{align*}
((\d_x+\iD\xi)^2+\d_z^2+(\iD\ell)^2\,(1+\mathfrak{w}_\eps))\Phi&=0\ \textrm{ in }\R\times (0,1+\mathfrak{h}_\eps)\,,&
\d_z\Phi(\cdot,0)&=0\,,&
\Phi(\cdot,1+\mathfrak{h}_\eps)&=\Psi\,.
\end{align*}
Sharp regularity properties of $G^{[\mathfrak{w}_\eps]}_{\ell,\xi}[1+\mathfrak{h}_\eps]$ are straightforward to prove ; possibly nonsharp estimates arise when estimating the effect of compositions without tracking cancellations. The presence of exponential factors in \eqref{conformal} is due to the fact that Floquet parameters in original physical variables and conformal variables are different whereas the presence of $\mathfrak{v}_\eps$ and $\mathfrak{w}_\eps$ arise from the fact that conformal variables preserve angles but not lengths.

\subsubsection*{Spectral decomposition and asymptotics}

Note that unlike $L^\eps$, Bloch symbols $L^\eps_{\ell,\xi}$ are elliptic (of first order), hence the easy identification of their domain. On a related note, the main advantage when replacing the direct analysis of $L^\eps$ with those of $L^\eps_{\ell,\xi}$ is that each  $L^\eps_{\ell,\xi}$ has compact resolvents thus spectrum reduced to eigenvalues of finite multiplicity, arranged discretely. A key related expectation is that
\begin{align}\label{spec}
\sigma(L^\eps)=\bigcup_{(\ell,\xi)\in\R\times[-\kappa/2,\kappa/2)}\sigma_{{\rm per}}(L^\eps_{\ell,\xi}),
\end{align}
where we have added the suffix ${\rm per}$ to mark that each $L_{\ell,\xi}^\eps$ acts on functions defined over $\R/T\Z$. The inclusion $\supset$ of equality \eqref{spec} follows from a systematic abstract argument as in \cite[p.30-31]{R} or in \cite[Appendix~A.2]{R2D2}. Indeed if $\lambda_0\in\sigma_{{\rm per}}(L^\eps_{\ell_0,\xi_0})$, then one may pick a corresponding eigenfunction $U_0$, and define $U^{[\delta]}$ by
\[
U^{[\delta]}(x,y)\ :=\ \int_{\xi_0-\delta}^{\xi_0+\delta}\int_{\ell_0-\delta}^{\ell_0+\delta} \eD^{\iD\xi x+\iD\ell y}\ U_0(x)\ \dif\ell\,\dif\xi\,,
\]
so that
\begin{align*}
\frac{\|(\lambda_0\,I-L^\eps)\,U^{[\delta]}\|_{L^2\times H^{\frac12}(\R^2)}}{\|U^{[\delta]}\|_{L^2\times H^{\frac12}(\R^2)}}
&\approx \frac{\left\|(\xi,\ell,x)\mapsto {\bf1}_{B((\xi_0,\ell_0),\delta)}(\xi,\ell) (L^\eps_{\xi_0,\ell_0} - L^\eps_{\xi,\ell}) (U_{0})(x) \right\|_{L_{(\xi,\ell)}^2(L^2_{{\rm per},\kappa}\times H^{\frac12}_{{\rm per},\kappa})}}{\left\|(\xi,\ell,x)\mapsto {\bf1}_{B((\xi_0,\ell_0),\delta)}(\xi,\ell)\ U_{0}(x) \right\|_{L_{(\xi,\ell)}^2(L^2_{{\rm per},\kappa}\times H^{\frac12}_{{\rm per},\kappa})}}\\
&\stackrel{\delta\to0}{\longrightarrow} 0\,,
\end{align*}
hence $\lambda_0\in\sigma(L^\eps)$. Note that, consistently with the above discussion, here we use only the continuity of $(\ell,\xi)\mapsto L^\eps_{\ell,\xi}$ when applied to a function $U_0$ with high regularity, being an eigenfunction of the elliptic operator $L^\eps_{\ell_0,\xi_0}$. %since $(\ell,\xi)\mapsto L^\eps_{\ell,\xi}$ is smooth as valued in relatively compact perturbations of, say, $L^\eps_{0,0}$.
The elucidation of the reverse inclusion would require a study of the high-frequency $|\ell|\to\infty$ behavior of $L^\eps_{\ell,\xi}$ but is fortunately not needed to prove Theorem~\ref{th:main}.

\section{The zero-amplitude spectrum}

\subsection{A curve of double eigenvalues}

We now begin our instability analysis by identifying a suitable curve of double eigenvalues of 
%\be \label{Leps0}
\[
L^0_{\ell,\xi}= \bp
   \d_x+\iD\xi  &|D|_{\ell,\xi}\tah(|D|_{\ell,\xi}) \\
   -\mu_0 &\d_x+\iD\xi \ep\,.
\]
%\ee 
By using Fourier series as we have used the Fourier/Floquet-Bloch transform in the previous section one obtains
\begin{align}\label{spec0}
\sigma_{{\rm per}}(L^0_{\ell,\xi})=\bigcup_{n\in\Z}\sigma\left(
\bp
   \iD\,n\kappa+\iD\xi  &|n\kappa|_{\ell,\xi}\tah(|n\kappa|_{\ell,\xi}) \\
   -\mu_0 &\iD\,n\kappa+\iD\xi \ep
\right)=\bigcup_{n\in\Z}\{\,\lambda_{n,\ell,\xi}^-\,,\,\lambda_{n,\ell,\xi}^+\,\}
\end{align}
with 
\be\label{def:lambdan}
\lambda_{n,\ell,\xi}^\pm :=\iD\left(
n\kappa+\xi\pm \sqrt{\mu_0\,|n\kappa|_{\ell,\xi}\tah(|n\kappa|_{\ell,\xi})}
\right)\,.
\ee
Note that unlike for the prospective \eqref{spec}, the high-frequency $|n|\to\infty$ analysis required to prove the inclusion $\subset$ of \eqref{spec0} is straightforward.

To motivate the main result of the present section we point out that for any $(n,\ell,\xi)$
\begin{align*}
\lambda_{n,-\ell,\xi}^\pm&=\lambda_{n,\ell,\xi}^\pm\,,&
\lambda_{-n,\ell,-\xi}^\pm&=-\lambda_{n,\ell,\xi}^\mp\,,
\end{align*}
and that the choice of $\mu_0$ from Proposition~\ref{p:Stokes} is precisely done to enforce
\[
\lambda^-_{0,0,0}\,=\,\lambda^+_{0,0,0}\,=\,\lambda^-_{1,0,0}\,=\,\lambda^+_{-1,0,0}\,=\,0\,.
\]
The goal of this section is the following proposition.

\begin{proposition}\label{p:double}
There exist $\ell_0>0$, $C_0>0$ and $\xi_+:(-\ell_0,\ell_0)\to (-\kappa/2,\kappa/2)$ analytic, odd, with a simple zero at $0$ and such that for any $\ell\in(-\ell_0,\ell_0)$,
\begin{align*}
\lambda^-_{1,\ell,\xi_+(\ell)}=\lambda^+_{-1,\ell,\xi_+(\ell)}&=:\iD\sigma_+(\ell)\,,&
|\lambda^\pm_{0,\ell,\xi_+(\ell)}-\iD\sigma_+(\ell)|&\geq\,C_0\,|\ell|
\end{align*}
and when $(n,\#)\notin\{(0,+),(0,-),(1,-),(-1,+)\}$
\[
|\lambda^\#_{n,\ell,\xi_+(\ell)}-\iD\sigma_+(\ell)|\,\geq\,C_0\,.
\]
\end{proposition}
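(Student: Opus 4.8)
The plan is to study the four ``small'' eigenvalues of $L^0_{\ell,\xi}$ that coalesce at the origin when $(\ell,\xi)=(0,0)$, namely $\lambda^+_{0,\ell,\xi}$, $\lambda^-_{0,\ell,\xi}$, $\lambda^-_{1,\ell,\xi}$ and $\lambda^+_{-1,\ell,\xi}$, and to isolate within this cluster the two branches coming from $n=\pm1$. First I would record from \eqref{def:lambdan} that, writing $\omega(k,\ell):=\sqrt{\mu_0\,|k|_{\ell,\xi}\tah(|k|_{\ell,\xi})}$ with $|k|_{\ell,\xi}=\sqrt{(k+\xi)^2+\ell^2}$ depending only on $k+\xi$ and $\ell$, one has $\lambda^-_{1,\ell,\xi}=\iD(\kappa+\xi-\omega(\kappa,\ell))$ and $\lambda^+_{-1,\ell,\xi}=\iD(-\kappa+\xi+\omega(-\kappa,\ell))$. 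Since $\omega(-\kappa,\ell)=\sqrt{\mu_0\,|{-\kappa+\xi}|_{\ell}\tah(|{-\kappa+\xi}|_\ell)}$ where $|{-\kappa+\xi}|_\ell=\sqrt{(\xi-\kappa)^2+\ell^2}$, the equation $\lambda^-_{1,\ell,\xi}=\lambda^+_{-1,\ell,\xi}$ is equivalent to
\[
\kappa-\omega(\kappa,\ell)=-\kappa+\omega(-\kappa,\ell)\,,
\qquad\text{i.e.}\qquad
F(\ell,\xi):=\omega(\kappa,\ell)+\omega(-\kappa,\ell)-2\kappa=0\,.
\]
At $(\ell,\xi)=(0,0)$ both radicands equal $\mu_0\,\kappa\tah(\kappa)=\kappa^2$, so $\omega(\kappa,0)=\omega(-\kappa,0)=\kappa$ and $F(0,0)=0$. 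The function $F$ is real-analytic near $(0,0)$ (the radicands are analytic and bounded away from zero there), and $\d_\xi F(0,0)$ computes to a nonzero multiple of $\tfrac{\d}{\d s}\big(\sqrt{\mu_0 s\tah(s)}\big)\big|_{s=\kappa}$, which is strictly positive. Hence by the analytic implicit function theorem there is a unique analytic $\xi_+$ defined near $0$, with $\xi_+(0)=0$, solving $F(\ell,\xi_+(\ell))=0$; oddness of $\xi_+$ follows from the evenness of $F$ in $\ell$ together with the relation $\lambda^-_{1,\ell,-\xi}$ versus $\lambda^+_{-1,\ell,\xi}$ recorded just before the proposition (equivalently, $F(\ell,\xi)$ is, up to the symmetry $\lambda_{-n,\ell,-\xi}^\pm=-\lambda_{n,\ell,\xi}^\mp$, odd under $\xi\mapsto-\xi$ when combined with the $n\mapsto-n$ swap), and simplicity of the zero is the nonvanishing of $\xi_+'(0)=-\d_\ell F(0,0)/\d_\xi F(0,0)$, which I would get by checking $\d_\ell F(0,0)\neq 0$ from an explicit short computation — this is where the precise form of $\mu_0$ enters.

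Next I would define $\iD\sigma_+(\ell):=\lambda^-_{1,\ell,\xi_+(\ell)}=\lambda^+_{-1,\ell,\xi_+(\ell)}$, which is analytic in $\ell$ by composition. The separation estimates are then a matter of quantitative non-coalescence. For the $n=0$ pair, $\lambda^\pm_{0,\ell,\xi}-\iD\sigma_+(\ell)=\iD\big(\xi\pm\omega(0,\ell)-(\kappa+\xi-\omega(\kappa,\ell))\big)$ evaluated at $\xi=\xi_+(\ell)$; at $\ell=0$ this is $\iD(\pm 0-(\kappa-\kappa))=0$ since $\omega(0,0)=\sqrt{\mu_0\cdot 0}=0$ — wait, more carefully $\lambda^\pm_{0,0,0}=0$ and $\iD\sigma_+(0)=0$, so the difference vanishes at $\ell=0$ and I need the first-order behavior in $\ell$. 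Here $\omega(0,\ell)=\sqrt{\mu_0\,|\ell|\tah|\ell|}=\sqrt{\mu_0}\,|\ell|+O(|\ell|^3)$, so $\lambda^\pm_{0,\ell,\xi_+(\ell)}-\iD\sigma_+(\ell)=\pm\iD\sqrt{\mu_0}\,|\ell|+O(\ell^2)$ (the $\xi$ and $\kappa$ contributions cancel to the order that matters because $\omega(\kappa,\ell)-\kappa=O(\ell^2)$), which gives $|\lambda^\pm_{0,\ell,\xi_+(\ell)}-\iD\sigma_+(\ell)|\geq C_0|\ell|$ for $|\ell|$ small after shrinking $\ell_0$. For the remaining indices $(n,\#)\notin\{(0,\pm),(1,-),(-1,+)\}$, at $(\ell,\xi)=(0,0)$ one checks from \eqref{def:lambdan} that $\lambda^\#_{n,0,0}\neq 0$: indeed $\lambda^\pm_{n,0,0}=\iD(n\kappa\pm|n|\kappa/\sqrt{\tah(n\kappa)/ (n\kappa)}\cdot\ldots)$ — concretely $\lambda^\pm_{n,0,0}=\iD\big(n\kappa\pm\sqrt{\mu_0|n|\kappa\tah(|n|\kappa)}\big)$ and the only $(n,\#)$ making this zero are exactly the four excluded ones (this is a finitely-checkable algebraic fact once $\mu_0\kappa\tah\kappa=\kappa^2$ is used, valid for all $\kappa>0$ by monotonicity of $s\mapsto s\tah(s)$). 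By continuity in $(\ell,\xi)$ and compactness (the spectrum is discrete and the map $(\ell,\xi)\mapsto\lambda^\#_{n,\ell,\xi}$ is continuous, uniformly in $n$ for $|n|$ large since $|\lambda^\pm_{n,\ell,\xi}|\gtrsim|n|$), the bound $|\lambda^\#_{n,\ell,\xi_+(\ell)}-\iD\sigma_+(\ell)|\geq C_0$ persists for $|\ell|<\ell_0$ after possibly shrinking $\ell_0$ and $C_0$.

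The main obstacle I anticipate is not any single hard estimate but getting the bookkeeping of symmetries and the first-order $\ell$-asymptotics exactly right: one must verify that the apparent first-order-in-$\ell$ contributions from the $\omega(\pm\kappa,\ell)$ terms genuinely cancel so that $\omega(\kappa,\ell)+\omega(-\kappa,\ell)-2\kappa$ and $\omega(\kappa,\ell)-\omega(-\kappa,\ell)$ have the right parities and orders in $\ell$ along $\xi=\xi_+(\ell)$, and that consequently $\sigma_+$ and $\xi_+$ have the claimed parity (odd $\xi_+$, even $\sigma_+$). This hinges on the fact that $|{\pm\kappa+\xi}|_\ell$ depends on $\xi$ only through $(\pm\kappa+\xi)^2$, so along the solution curve the $n=1$ and $n=-1$ contributions are genuinely interchanged under $\xi\mapsto-\xi$; I would make this rigorous by introducing the single scalar function $s\mapsto\sqrt{\mu_0\,s\,\tah(s)}$, writing everything in terms of $\sqrt{(\kappa+\xi)^2+\ell^2}$ and $\sqrt{(\kappa-\xi)^2+\ell^2}$, and invoking the implicit function theorem directly on $F$. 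The positivity of $\d_\xi F(0,0)$ and the nonvanishing of $\d_\ell F(0,0)$ reduce to elementary inequalities for $s\tah(s)$ and its derivative at $s=\kappa$, which hold for every $\kappa>0$; these I would state as a short lemma or fold into the computation, since they are the only place the explicit value of $\mu_0$ is used.
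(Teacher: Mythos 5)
There is a fatal gap at the central step. Your function $F(\ell,\xi)=\omega(\kappa,\ell)+\omega(-\kappa,\ell)-2\kappa$ (the paper's $-\varphi(\ell,\xi)$, with $\varphi(\ell,\xi)=2\kappa-\sqrt{\mu_0}\,(F(|(\ell,\kappa-\xi)|)+F(|(\ell,\kappa+\xi)|))$ in the notation of Lemma~\ref{F-aux}) is \emph{even in $\xi$} as well as even in $\ell$: the substitution $\xi\mapsto-\xi$ merely swaps the two radicals. Consequently $\d_\xi F(0,0)=0$ — the two terms you identify as ``a nonzero multiple of $\frac{\dif}{\dif s}\sqrt{\mu_0 s\tah(s)}|_{s=\kappa}$'' come with opposite signs and cancel exactly — and likewise $\d_\ell F(0,0)=0$, so your later claim that ``$\d_\ell F(0,0)\neq0$'' is also false. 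The implicit function theorem therefore does not apply in the form you invoke it. This is not a bookkeeping issue: the origin is a saddle of $\varphi$ (one checks $\d_\ell^2\varphi(0,0)<0<\d_\xi^2\varphi(0,0)$), so its zero set near $(0,0)$ is a transversal crossing of \emph{two} analytic curves (this is exactly why both $\xi_+$ and $\xi_-=-\xi_+$ appear in the statement and in Figure~\ref{figure1}), and no single graph $\xi=\xi_+(\ell)$ can be extracted by a first-order argument. The paper resolves this by a blow-up: since $\varphi$ is an analytic function of $(\ell^2,\xi^2)$, one writes $\xi^2=\Xi\,\ell^2$, factors $\varphi=\ell^2\,\Phi(\ell^2,\Xi)$, and applies the implicit function theorem to $\Phi$ at $(0,\underline{\Xi})$ with $\underline{\Xi}=-\d_\ell^2\varphi(0,0)/\d_\xi^2\varphi(0,0)>0$, yielding $\xi_+(\ell)=\ell\sqrt{\Xi_+(\ell^2)}$ — which is also the only correct route to the claimed oddness (your scheme, if it worked, would produce an \emph{even} $\xi_+$ from an $F$ even in $\ell$). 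Note too that the sign $\d_\xi^2\varphi(0,0)>0$ rests on $F''(\kappa)<0$, i.e.\ the strict concavity of $r\mapsto\sqrt{r\tah(r)}$, which the paper proves in Lemma~\ref{F-aux} via a nontrivial fourth-derivative computation; your proposal never identifies that this inequality is needed.

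Two secondary remarks. First, in the $n=0$ separation estimate your cancellation claim is off: along the curve, $\sqrt{\mu_0}F(\omega_1(\ell))-\kappa=\sqrt{\mu_0}F'(\kappa)\,\xi_+(\ell)+\cO(\ell^2)$ is of size $|\ell|$, not $\cO(\ell^2)$, since $\xi_+(\ell)\sim c\,\ell$; the lower bound still holds but requires comparing this term with $\sqrt{\mu_0}\,|(\ell,\xi_+(\ell))|$ (the paper instead proves the cleaner uniform bound $|\lambda^\pm_{0,\ell,\xi}-\lambda^\pm_{\mp1,\ell,\xi}|\geq C_1|(\ell,\xi)|$ on a full neighborhood, using concavity of $r\mapsto -r+\sqrt{\mu_0}F(r)$). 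Second, the finitely-checkable fact that $\lambda^\#_{n,0,0}\neq0$ for the non-excluded indices with $\#n<0$ is again a concavity statement (the function $-r+\sqrt{\mu_0}F(r)$ cannot have more than two zeros), not a purely algebraic one.
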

Symmetrically, Proposition \ref{p:double} holds with $\xi_+(\ell)$ and $\sigma_+(\ell)$ replaced with $\xi_-(\ell):=-\xi_+(\ell)$ and $\sigma_-(\ell):=-\sigma_+(\ell)$, respectively. 
For $\kappa=1$, we visualize $\xi_\pm(\ell)$ and $\sigma_\pm(\ell)$ in Figure~\ref{figure1}.
\begin{figure}[htbp]
\centering
\includegraphics[scale=0.4]{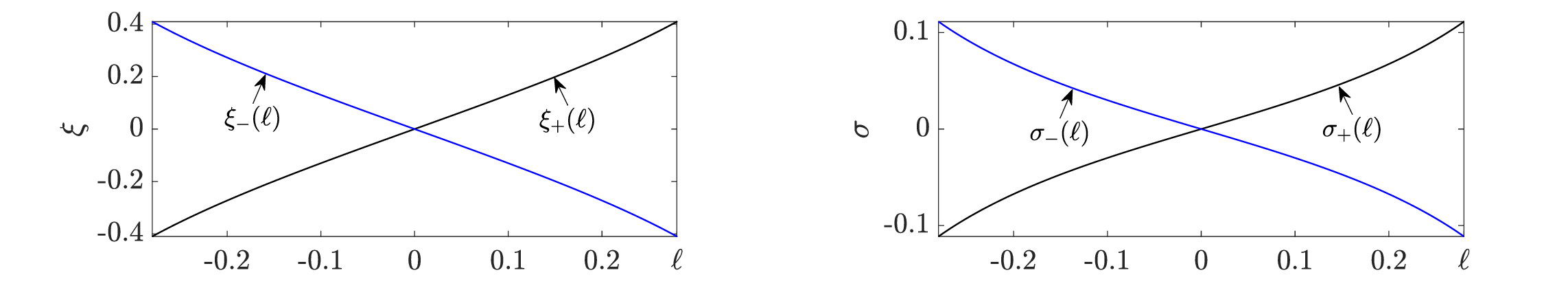}
\caption{The eigenvalues $\lambda_{1,\ell,\xi}^-$ and $\lambda_{-1,\ell,\xi}^+$ of $L_{\ell,\xi}^0$ collide on the curves $(\ell,\xi_+(\ell))$ and $(\ell,\xi_-(\ell))$ where they are equal to $i\sigma_+(\ell)$ and $i\sigma_-(\ell)$, respectively. Left panel: Visualization of  $(\ell,\xi_\pm(\ell))$ for $\kappa=1$. Right panel: Visualization of  $(\ell,\sigma_\pm(\ell))$ for $\kappa=1$.}
\label{figure1}
\end{figure}

From now on, we will only consider spectra near $\iD\sigma_+(\ell)$. 
Proposition~\ref{p:double} is obtained by combining a series of lemmas. As a preliminary we study an auxiliary function.

\begin{lemma}\label{F-aux} The map $\R_+\to\R$, $r\mapsto\sqrt{r\tah(r)}$ is concave and possesses an extension $F:\R\to\R$, that is analytic, increasing, odd, and such that $F'$ does not vanish and $F''$ vanishes only at the origin where it has a simple zero.
\end{lemma}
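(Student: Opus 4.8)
The plan is to exhibit the extension explicitly and reduce the whole statement to a single elementary inequality. First I would note that $r\mapsto \tah(r)/r$ extends to an even, real-analytic, strictly positive function on $\R$: it equals $1$ at the origin (its power series converges for $|r|<\pi/2$, the nearest singularity of $\tah$ sitting at $\iD\pi/2$), while for $r\neq0$ on the real line $\tah$ is real-analytic and $\tah(r)$ has the sign of $r$. Hence $h(r):=\sqrt{\tah(r)/r}$ is even, real-analytic on $\R$, with $h(0)=1$, and I would set $F(r):=r\,h(r)$. Then $F$ is odd and real-analytic, $F(r)^2=r\,\tah(r)$, and $F(r)>0$ for $r>0$, so $F(r)=\sqrt{r\,\tah(r)}$ on $\R_+$; being analytic, $F$ is the unique such extension. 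Monotonicity is then immediate: $F'$ is even with $F'(0)=h(0)=1>0$, and for $r>0$, writing $f(r):=r\,\tah(r)$ and using $\tah'=1-\tah^2$, one has $F'(r)=\frac{\tah(r)+r\,(1-\tah^2(r))}{2\sqrt{r\,\tah(r)}}>0$; so $F'$ never vanishes and $F$ is strictly increasing.

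The heart of the matter is the concavity of $F$ on $\R_+$, i.e. $2ff''-(f')^2\le0$ on $(0,\infty)$, since $(\sqrt f)''=\frac{2ff''-(f')^2}{4\,f^{3/2}}$. Using $\tah'=1-\tah^2$ I would compute $f'=\tah+r(1-\tah^2)$ and $f''=2\,(1-\tah^2)\,(1-r\,\tah)$, and then verify the algebraic identity
\[
(f')^2-2ff''\;=\;\bigl(\tah(r)-r\,(1-\tah^2(r))\bigr)^2\;+\;4\,r^2\,\tah^2(r)\,(1-\tah^2(r))\,,
\]
whose right-hand side, a sum of two squares multiplied by the positive factor $1-\tah^2$, is nonnegative and is strictly positive for $r>0$. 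Therefore $F''<0$ on $(0,\infty)$, so $F$ --- and in particular $r\mapsto\sqrt{r\,\tah(r)}$ --- is (strictly) concave there. Since $F$ is odd, $F''$ is odd, hence $F''>0$ on $(-\infty,0)$ and $F''$ vanishes exactly at the origin. Finally, the short expansion $\tah(r)/r=1-\tfrac{1}{3}r^2+\cO(r^4)$, hence $h(r)=1-\tfrac{1}{6}r^2+\cO(r^4)$ and $F(r)=r-\tfrac{1}{6}r^3+\cO(r^5)$, gives $F'''(0)=-1\neq0$, so the zero of $F''$ at $0$ is simple.

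I do not anticipate a genuine obstacle. The only point deserving a little care is checking that $\tah(r)/r$ really extends real-analytically \emph{through} $r=0$, so that $h$ and thus $F$ are analytic on all of $\R$ rather than merely on $(0,\infty)$; this is what the convergence of the power series for $|r|<\pi/2$ provides. Everything else is the bookkeeping behind the sum-of-squares identity above, which one may alternatively obtain by substituting $t=\tah(r)$ (so $1-\tah^2(r)=1-t^2$) throughout and expanding.
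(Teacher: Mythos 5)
Your proposal is correct, and its key step differs from the paper's in an interesting way. The paper also reduces everything to showing $F''<0$ on $(0,\infty)$, but it does so by writing $F''(r)=\tF(r)/\bigl(4r^{3/2}(\eD^{2r}-1)^{3/2}(\eD^{2r}+1)^{5/2}\bigr)$ for an explicit exponential polynomial $\tF$, checking that $\tF$ and its first three derivatives vanish at the origin, and verifying $\tF''''<0$ on $\R_+^*$ by inspection of a factored form, concluding via the integral Taylor formula. You instead work with $f(r)=r\tah(r)$ and the identity $(\sqrt f)''=\frac{2ff''-(f')^2}{4f^{3/2}}$, and your sum-of-squares decomposition
\[
(f')^2-2ff''=\bigl(\tah(r)-r(1-\tah^2(r))\bigr)^2+4r^2\tah^2(r)\bigl(1-\tah^2(r)\bigr)
\]
is verified by a two-line computation with $t=\tah(r)$, $f'=t+r(1-t^2)$, $f''=2(1-t^2)(1-rt)$ (I checked the algebra; it is exact, and the second term is strictly positive for $r>0$). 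Your route is more elementary and avoids the fourth-derivative bookkeeping entirely; what the paper's computation buys in exchange is essentially nothing here, so yours is arguably the cleaner argument. The remaining points — analytic extension via the even nonvanishing function $\tah(r)/r$, oddness, $F'>0$, oddness of $F''$, and $F'''(0)=-1$ giving the simple zero — match the paper's (largely implicit) treatment and are all correct.
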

\begin{proof}
Most properties stem readily from the fact that $\tah$ is analytic, increasing, odd, with a simple zero at the origin and when $r>0$, $\sqrt{r\tah(r)}=r\sqrt{\tah(r)/r}$. We only need to justify that when $r>0$, $F''(r)<0$. Direct computations show that for any $r>0$
\begin{align*}
%F_1'(r)=\frac{r(1-\th^2(r))+\th(r)}{2\sqrt{r\th(r)}}>0,\qquad 
F''(r)&=\frac{\tF(r)}{4r^{3/2}(\eD^{2r} - 1)^{3/2}(\eD^{2r} + 1)^{5/2}},
\end{align*}
where $\tF:\R\to\R$ is defined by
\begin{align*}
\tF(r)&:=-\eD^{8r}-16r^2\eD^{6r}+8r\eD^{6r}+16r^2e^{4r}+2\eD^{4r}-16r^2\eD^{2r}-8r\eD^{2r}- 1\,.
\end{align*}
Direct computation reveals $\tilde{F}(0),\tilde{F}'(0),\tilde{F}''(0),\tilde{F}'''(0)=0$ and for any $r>0$
\begin{align*}
\tF''''(r)&=-128\eD^{2r}\left(\eD^{6r}(32-28\eD^{-4r})+r^2\eD^{4r}(162-32\eD^{-2r})+r\eD^{4r}(135-64e^{-2r})+2r^2+9r+8\right)\\
&<\,0.
\end{align*}
From the integral Taylor formula it follows that $\tF$ is negative on $\R_+^*$ thus so is $F''$.
\end{proof}

We now prove the separation part of Proposition~\ref{p:double}.

\begin{lemma}
There exist $\delta_1>0$ and $C_1>0$ such that for any $(\ell,\xi)\in[-\delta_1,\delta_1]\times[-\delta_1,\delta_1]$,
\begin{align*}
|\lambda^\pm_{0,\ell,\xi}-\lambda^\pm_{\mp1,\ell,\xi}|&\geq\,C_1\,|(\ell,\xi)|
\end{align*}
and when $(n,\#)\notin\{(0,+),(0,-),(1,-),(-1,+)\}$
\[
|\lambda^\#_{n,\ell,\xi}|\,\geq\,C_1\,.
\]
\end{lemma}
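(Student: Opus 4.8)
The plan is to first split the spectrum of $L^0_{\ell,\xi}$ into the four small eigenvalues $\lambda^\pm_{0,\ell,\xi},\lambda^-_{1,\ell,\xi},\lambda^+_{-1,\ell,\xi}$ and a remainder uniformly bounded away from $0$ (the latter being exactly the second displayed inequality), and then to quantify the splitting among the small ones. Throughout I write $\lambda^\pm_{n,\ell,\xi}=\iD\,\omega^\pm_{n,\ell,\xi}$ with $\omega^\pm_{n,\ell,\xi}:=n\kappa+\xi\pm\sqrt{\mu_0}\,F(|n\kappa|_{\ell,\xi})$, which is licit since $|n\kappa|_{\ell,\xi}\geq0$ and $\sqrt{\mu_0\,r\,\tah(r)}=\sqrt{\mu_0}\,F(r)$ for $r\geq0$, $F$ being the analytic extension of Lemma~\ref{F-aux}. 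I shall only use: (i) $F(r)=r+\cO(r^3)$ near $0$, hence $F'(0)=1$; (ii) $F$ is increasing with $F'$ non-vanishing and concave on $\R_+$, so $0<F'\leq1$ on $\R$ — in particular $F$ is $1$-Lipschitz — and even $0<F'(\kappa)<1$ \emph{strictly}, since $F''<0$ on $\R_+^*$; (iii) the normalisation $\mu_0=\kappa/\tah(\kappa)$ gives $\sqrt{\mu_0}\,F(\kappa)=\sqrt{\mu_0\,\kappa\,\tah(\kappa)}=\kappa$ and, for $n\geq1$, $\sqrt{\mu_0}\,F(n\kappa)=\kappa\sqrt{n\,\tah(n\kappa)/\tah(\kappa)}$.

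\emph{Far eigenvalues.} First I would check that $2c_0:=\inf|\omega^\#_{n,0,0}|$, the infimum being over $(n,\#)\notin\{(0,+),(0,-),(1,-),(-1,+)\}$, is positive. By (iii), $\omega^+_{\pm1,0,0}=\pm2\kappa$; for $n\geq2$, $\omega^+_{n,0,0}=n\kappa+\sqrt{\mu_0}\,F(n\kappa)>0$ while $\omega^-_{n,0,0}=\kappa\bigl(n-\sqrt{n\,\tah(n\kappa)/\tah(\kappa)}\bigr)>0$ because strict concavity of $\tah$ on $\R_+^*$ forces $\tah(n\kappa)<n\,\tah(\kappa)$; the cases $n\leq-2$ follow from the symmetry $\omega^\pm_{-n,\ell,-\xi}=-\omega^\mp_{n,\ell,\xi}$. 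As $\omega^\pm_{n,0,0}\to\pm\infty$ when $n\to\pm\infty$, the infimum is a positive minimum. Then, since $F$ is $1$-Lipschitz and $\bigl||n\kappa|_{\ell,\xi}-|n\kappa|\bigr|\leq|(\ell,\xi)|$, one obtains the \emph{$n$-independent} bound $|\omega^\#_{n,\ell,\xi}-\omega^\#_{n,0,0}|\leq(1+\sqrt{\mu_0})\,|(\ell,\xi)|$; choosing $\delta_1\leq c_0\bigl(\sqrt2\,(1+\sqrt{\mu_0})\bigr)^{-1}$ then yields $|\lambda^\#_{n,\ell,\xi}|\geq c_0$ on $[-\delta_1,\delta_1]^2$ for every far $(n,\#)$.

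\emph{Adjacent eigenvalues.} For the pair $(0,+),(-1,+)$ I would write, using $|0|_{\ell,\xi}=|(\ell,\xi)|$,
\[
\lambda^+_{0,\ell,\xi}-\lambda^+_{-1,\ell,\xi}=\iD\Bigl(\kappa+\sqrt{\mu_0}\,F(|(\ell,\xi)|)-\sqrt{\mu_0}\,F(|{-\kappa}|_{\ell,\xi})\Bigr),
\]
then Taylor-expand near the origin using $F(|(\ell,\xi)|)=|(\ell,\xi)|+\cO(|(\ell,\xi)|^3)$ (because $F(r)/r$ is an even analytic function of $r$ equal to $1$ at $0$), $|{-\kappa}|_{\ell,\xi}=\kappa-\xi+\cO(|(\ell,\xi)|^2)$, and $\sqrt{\mu_0}\,F(\kappa)=\kappa$, to get
\[
\lambda^+_{0,\ell,\xi}-\lambda^+_{-1,\ell,\xi}=\iD\,\sqrt{\mu_0}\,\bigl(|(\ell,\xi)|+F'(\kappa)\,\xi\bigr)+\iD\,\cO(|(\ell,\xi)|^2).
\]
Since $|\xi|\leq|(\ell,\xi)|$ and $0<F'(\kappa)<1$, the homogeneous degree-one part has modulus $\geq\sqrt{\mu_0}\,(1-F'(\kappa))\,|(\ell,\xi)|$, so a further shrinking of $\delta_1$ absorbs the quadratic remainder and gives $|\lambda^+_{0,\ell,\xi}-\lambda^+_{-1,\ell,\xi}|\geq\tfrac12\sqrt{\mu_0}\,(1-F'(\kappa))\,|(\ell,\xi)|$. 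The pair $(0,-),(1,-)$ is treated identically after replacing $|{-\kappa}|_{\ell,\xi}$ by $|\kappa|_{\ell,\xi}=\kappa+\xi+\cO(|(\ell,\xi)|^2)$, the degree-one part then being $\sqrt{\mu_0}\,(F'(\kappa)\,\xi-|(\ell,\xi)|)$, with the same lower bound. Taking $C_1$ and $\delta_1$ to be the worst of the finitely many constants above concludes.

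\emph{Main obstacle.} The only genuinely delicate point is the \emph{uniformity in $n$} in the far-eigenvalue step: one must know both that $\inf|\omega^\#_{n,0,0}|>0$ over the entire infinite far index set — not merely for small $|n|$ — and that the $(\ell,\xi)$-modulus of continuity is $n$-free; both follow from the asymptotics $\omega^\pm_{n,0,0}\to\pm\infty$ together with the global bound $0<F'\leq1$. By contrast the coercivity in the adjacent block is immediate once one knows $F'(\kappa)<1$, which is precisely the strict-concavity content of Lemma~\ref{F-aux}.
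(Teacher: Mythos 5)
Your proof is correct and follows essentially the same route as the paper: for the far indices, non-vanishing at $(\ell,\xi)=(0,0)$ (via the fact that $r\mapsto -r+\sqrt{\mu_0}F(r)$, equivalently $\tah(n\kappa)<n\tah(\kappa)$, rules out extra zeros) combined with continuity and the $|n|\to\infty$ asymptotics, and for the adjacent pairs a degree-one coercivity coming from $F'(0)=1>F'(\kappa)$, which is exactly the strict-concavity content of Lemma~\ref{F-aux}. The only (stylistic) difference is that you obtain the near-pair bound by a direct Taylor expansion with explicit remainder control, whereas the paper packages it through the triangle inequality and the concave auxiliary function $F_{\mu_0}:r\mapsto -r+\sqrt{\mu_0}\,F(r)$; your treatment of the uniformity in $n$ of the $(\ell,\xi)$-perturbation (via the global bound $0<F'\leq1$) is in fact slightly more explicit than the paper's.
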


\begin{proof}
Locally uniformly in $(\ell,\xi)$, $\lambda^\pm_{n,\ell,\xi}\stackrel{|n|\to\infty}{\sim}\iD n\kappa$. Thus in order to prove the second bound it is sufficient to invoke continuity in $(\ell,\xi)$ and check that when $(n,\#)\notin\{(0,+),(0,-),(1,-),(-1,+)\}$, $\lambda^\#_{n,0,0}\neq 0$. This is obvious when $\#n>0$ whereas when $\#n<0$ it follows from the fact that, with $F$ as in Lemma~\ref{F-aux}, $F_{\mu_0}:r\mapsto -r+\sqrt{\mu_0}\,F(r)$ is strictly concave on $\R_+$ and thus cannot vanish more than twice (here at $0$ and $\kappa$). 

Concerning the first bound, we first deduce from triangle inequality that $|\pm\kappa|_{\ell,\xi}\leq \kappa+|(\ell,\xi)|$ so that 
\begin{align*}
\mp\iD\,(\lambda^\pm_{0,\ell,\xi}-\lambda^\pm_{\mp1,\ell,\xi})
&=\kappa-\sqrt{\mu_0}\,\big(F(|\mp\kappa|_{\ell,\xi})+F(|(\ell,\xi)|)\big)\\
&\geq \kappa-\sqrt{\mu_0}\,\big(F(\kappa+|(\ell,\xi)|)+F(|(\ell,\xi)|)\big)
=-F_{\mu_0}(\kappa+|(\ell,\xi)|)+F_{\mu_0}(|(\ell,\xi)|)\,.
\end{align*}
Then we derive the first bound from the strict concavity of $F_{\mu_0}$ on $\R_+$ which implies that $F_{\mu_0}$ is positive on $(0,\kappa)$ and has negative derivative at its zero $\kappa$. 
\end{proof}

We conclude the proof of Proposition~\ref{p:double} by proving the existence of a curve of double eigenvalues.

\begin{lemma}\label{l:double}
There exist $\ell_0>0$ and $\xi_+:(-\ell_0,\ell_0)\to (-\kappa/2,\kappa/2)$ analytic, odd, with a simple zero at $0$ and such that for any $\ell\in(-\ell_0,\ell_0)$,
\begin{align*}
\lambda^-_{1,\ell,\xi_+(\ell)}=\lambda^+_{-1,\ell,\xi_+(\ell)}\,.
\end{align*}
\end{lemma}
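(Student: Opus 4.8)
The plan is to solve the equation $\lambda^-_{1,\ell,\xi}=\lambda^+_{-1,\ell,\xi}$ for $\xi$ as a function of $\ell$ near the origin by the analytic implicit function theorem. First I would write out the equation explicitly using \eqref{def:lambdan}. Since $|\kappa|_{\ell,\xi}=\sqrt{(\kappa+\xi)^2+\ell^2}$ and $|-\kappa|_{\ell,\xi}=\sqrt{(\kappa-\xi)^2+\ell^2}$, and recalling the notation $F(r)=\sqrt{r\tah(r)}$ from Lemma~\ref{F-aux} (extended to an odd analytic function on $\R$), the condition $\lambda^-_{1,\ell,\xi}=\lambda^+_{-1,\ell,\xi}$ reads
\[
(\kappa+\xi)-\sqrt{\mu_0}\,F\!\left(\sqrt{(\kappa+\xi)^2+\ell^2}\right)
=(-\kappa+\xi)+\sqrt{\mu_0}\,F\!\left(\sqrt{(\kappa-\xi)^2+\ell^2}\right),
\]
that is, after cancelling $\xi$ and dividing by $2$,
\[
\Phi(\ell,\xi):=\kappa-\frac{\sqrt{\mu_0}}{2}\left(F\!\left(\sqrt{(\kappa+\xi)^2+\ell^2}\right)+F\!\left(\sqrt{(\kappa-\xi)^2+\ell^2}\right)\right)=0.
\]
One checks $\Phi(0,0)=\kappa-\sqrt{\mu_0}\,F(\kappa)=\kappa-\sqrt{\mu_0}\,\sqrt{\kappa\tah(\kappa)}=0$ by the very definition $\mu_0=\kappa/\tah(\kappa)$ from Proposition~\ref{p:Stokes}.

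Next I would verify the nondegeneracy hypothesis $\d_\xi\Phi(0,0)\neq 0$. A direct differentiation gives, at $(\ell,\xi)=(0,0)$, $\d_\xi\Phi(0,0)=-\frac{\sqrt{\mu_0}}{2}\big(F'(\kappa)-F'(\kappa)\big)=0$, so the first $\xi$-derivative vanishes — which is exactly why the sought curve has a \emph{simple zero} at $0$ rather than being transverse in the naive sense. The point is that $\Phi$ is even in $\xi$ (the two square-root arguments swap under $\xi\mapsto-\xi$), hence $\Phi$ is genuinely a function of $(\ell,\xi^2)$, or better, I would instead solve for $\xi^2$ or treat $\ell$ as the function of $\xi$. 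The cleanest route: observe $\Phi$ is even in both $\ell$ and $\xi$ and analytic in $(\ell^2,\xi^2)$; write $\Phi(\ell,\xi)=\Psi(\ell^2,\xi^2)$ with $\Psi$ analytic near $(0,0)$, $\Psi(0,0)=0$. Then I would compute $\d_{(\xi^2)}\Psi(0,0)$ and $\d_{(\ell^2)}\Psi(0,0)$. Using $\d_{\xi^2}\big(F(\sqrt{(\kappa\pm\xi)^2+\ell^2})\big)$ evaluated at $0$: from $(\kappa\pm\xi)^2+\ell^2=\kappa^2\pm2\kappa\xi+\xi^2+\ell^2$, the $\xi^2$ coefficient of the argument is $1$ (and there is a $\pm2\kappa\xi$ linear term whose square contributes $4\kappa^2$ to the $\xi^2$-coefficient after expanding $F$ through second order), so a short computation yields
\[
\d_{(\xi^2)}\Psi(0,0)=-\frac{\sqrt{\mu_0}}{2}\left(F'(\kappa)+2\kappa^2 F''(\kappa)\right),\qquad
\d_{(\ell^2)}\Psi(0,0)=-\frac{\sqrt{\mu_0}}{2}\,F'(\kappa).
\]
Since $F'(\kappa)>0$ (Lemma~\ref{F-aux}), the $\ell^2$-coefficient is nonzero, so the analytic implicit function theorem applied to $\Psi(\ell^2,\xi^2)=0$ solves $\ell^2=\rho(\xi^2)$ for an analytic $\rho$ with $\rho(0)=0$ and $\rho'(0)=-\d_{\xi^2}\Psi/\d_{\ell^2}\Psi=1+2\kappa^2 F''(\kappa)/F'(\kappa)$. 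Here I must argue $\rho'(0)>0$ so that the relation can be inverted to give $\xi^2$ as an analytic function of $\ell^2$ with positive derivative at $0$; equivalently $\rho'(0)\ne 0$ and of the right sign. Then $\xi^2=\sigma(\ell^2)$ analytic, and taking the branch $\xi_+(\ell)=\sqrt{\sigma(\ell^2)}\,\sign(\ell)\cdot(\text{const})$ — more precisely $\xi_+(\ell)=\ell\,\sqrt{\sigma(\ell^2)/\ell^2}$, which is analytic, odd, with a simple zero at $0$ since $\sigma(\ell^2)/\ell^2\to\sigma'(0)>0$.

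I anticipate the main obstacle is precisely establishing the sign $\rho'(0)>0$ (equivalently $1+2\kappa^2 F''(\kappa)/F'(\kappa)>0$, i.e. $F'(\kappa)+2\kappa^2F''(\kappa)$ and $F'(\kappa)$ have a suitable sign relation making $\xi^2$ a \emph{positive} analytic function of $\ell^2$). Since $F''(\kappa)<0$ by Lemma~\ref{F-aux}, the quantity $F'(\kappa)+2\kappa^2 F''(\kappa)$ is not obviously positive, and the geometry of the eigenvalue collision dictates which sign actually occurs; one expects from Figure~\ref{figure1} that $\xi_+(\ell)$ is real for $\ell$ real, forcing $\sigma'(0)\geq 0$, but a clean analytic proof requires either an explicit estimate on $F'(\kappa),F''(\kappa)$ (doable: both are elementary expressions in $\sih,\coh$) or a softer topological/degree argument tracking that the two eigenvalue branches $\lambda^-_{1,\ell,\xi}$ and $\lambda^+_{-1,\ell,\xi}$, which coincide at $(\ell,\xi)=(0,0)$, must recross for small $\ell\ne0$ at a real $\xi$ because of the reflection symmetry $\lambda^\pm_{n,-\ell,\xi}=\lambda^\pm_{n,\ell,\xi}$ together with $\lambda^\mp_{\mp n,\ell,-\xi}=-\lambda^\pm_{n,\ell,\xi}$. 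I would carry out the explicit-estimate route as the primary argument and mention the symmetry argument as a check. Oddness of $\xi_+$ and the simple-zero property then follow immediately from the construction, and the $(-\kappa/2,\kappa/2)$ range is ensured by shrinking $\ell_0$.
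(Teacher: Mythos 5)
Your overall strategy is the same as the paper's: exploit the evenness of the collision condition in both $\ell$ and $\xi$ to rewrite it as an analytic function of $(\ell^2,\xi^2)$, apply the implicit function theorem, and recover an odd analytic $\xi_+$ by taking a square root of a positive analytic function of $\ell^2$. However, there are two connected problems. First, your second-derivative computations are wrong: you expanded $F$ as if its argument were $(\kappa\pm\xi)^2+\ell^2$ rather than $\sqrt{(\kappa\pm\xi)^2+\ell^2}$. Setting $\xi=0$ or $\ell=0$ directly gives the correct values
\[
\d_{(\xi^2)}\Psi(0,0)=-\tfrac{\sqrt{\mu_0}}{2}\,F''(\kappa)\,,\qquad
\d_{(\ell^2)}\Psi(0,0)=-\tfrac{\sqrt{\mu_0}}{2}\,\frac{F'(\kappa)}{\kappa}\,,
\]
since $F(\kappa+\xi)+F(\kappa-\xi)=2F(\kappa)+F''(\kappa)\xi^2+\cO(\xi^4)$ and $2F(\sqrt{\kappa^2+\ell^2})=2F(\kappa)+\frac{F'(\kappa)}{\kappa}\ell^2+\cO(\ell^4)$; your expressions $-\frac{\sqrt{\mu_0}}{2}(F'(\kappa)+2\kappa^2F''(\kappa))$ and $-\frac{\sqrt{\mu_0}}{2}F'(\kappa)$ are incorrect.

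Second, and more importantly, you leave the decisive point — the positivity of $\sigma'(0)=\xi_+^2/\ell^2$ at the origin, i.e.\ the fact that the two branches recollide at \emph{real} $\xi$ for $\ell\neq0$ — as an open ``main obstacle'' to be handled by further estimates or a degree argument. With the correct derivatives there is nothing left to do: the relevant ratio is
\[
\sigma'(0)\,=\,-\frac{\d_{(\ell^2)}\Psi(0,0)}{\d_{(\xi^2)}\Psi(0,0)}\,=\,-\frac{F'(\kappa)}{\kappa\,F''(\kappa)}\,>\,0\,,
\]
because Lemma~\ref{F-aux} gives $F'>0$ everywhere and $F''(\kappa)<0$ for $\kappa>0$. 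This is precisely why the paper proves strict concavity of $r\mapsto\sqrt{r\tah(r)}$ in Lemma~\ref{F-aux}; the sign of $F''$ is the whole content of the reality of the curve. As written, your proof does not close, and the quantity whose sign you propose to estimate is not even the right one. Once these two points are repaired, your argument (solving $\ell^2=\rho(\xi^2)$ and inverting) is equivalent to the paper's, which instead parametrizes by $\Xi=\xi^2/\ell^2$ and solves $\Phi(\ell^2,\Xi)=0$ for $\Xi$ near $\underline{\Xi}=-F'(\kappa)/(\kappa F''(\kappa))$ — a mildly cleaner way to handle the degeneracy at the origin, but not a substantive difference.
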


\begin{proof}
Note that with $F$ as in Lemma~\ref{F-aux}
\[
-\iD\,(\lambda^-_{1,\ell,\xi}-\lambda^+_{-1,\ell,\xi})
=2\kappa-\sqrt{\mu_0}\,(F(|(\ell,\kappa-\xi)|)+F(|(\ell,\kappa+\xi)|))=:\varphi(\ell,\xi)
\]
defines in a neighborhood of $(0,0)$ an analytic function of $(\ell,\xi)$ that is even with respect to both $\ell$ and $\xi$, thus it also defines an analytic function of $(\ell^2,\xi^2)$. We compute 
\begin{align*}
\d_\ell\varphi(\ell,0)&=
-2\sqrt{\mu_0}\,F'(|(\ell,\kappa)|)\frac{\ell}{|(\ell,\kappa)|}\,,&
\d_\ell^2\varphi(0,0)
&=\,-2\sqrt{\mu_0}\,\frac{F'(\kappa)}{\kappa}<0
\end{align*} 
\begin{align*}
\d_\xi\varphi(0,\xi)&=
-\sqrt{\mu_0}\,\left(-F'(\kappa-\xi)+F'(\kappa+\xi)\right)\,,&
\d_\xi^2\varphi(0,0)
&=\,-2\sqrt{\mu_0}\,F''(\kappa)>0
\end{align*} 
and set
\[
\underline{\Xi}:=-\frac{\d_\ell^2\varphi(0,0)}{\d_\xi^2\varphi(0,0)}>0.
\]
As a consequence there exists $\Phi$ an analytic function defined on $\Omega$ a neighborhood of $(0,\underline{\Xi})$ such that 
\begin{align*}
\Phi(0,\underline{\Xi})&=0\,,& 
\d_\Xi\Phi(0,\underline{\Xi})&=\frac12\d_\xi^2\varphi(0,0)\neq0\,
\end{align*}
and when $(\ell,\xi,\Xi)$ is such that $\xi^2=\Xi\,\ell^2$ and $(\ell^2,\Xi)\in \Omega$
\[
\varphi(\ell,\xi)\,=\,\ell^2\,\Phi(\ell^2,\Xi)\,.
\]
By applying the implicit function theorem to $\Phi$ one obtains $\Xi_+$ analytic on a neighborhood of $0$ such that $\Xi_+(0)=\underline{\Xi}$ and for any small $\Lambda$, $\Phi(\Lambda,\Xi_+(\Lambda))=0$. The proof is concluded by setting $\xi_+(\ell):=\ell\,\sqrt{\Xi_+(\ell^2)}$. 
\end{proof}

\subsection{A diagonalization basis at zero-amplitude}\label{ss:choice}

From now on we always assume implicitly that $\ell$ is sufficiently small to be in the application range of Proposition~\ref{p:double}.

In the end we shall analyze how the spectrum of $L^\eps_{\ell,\xi}$ behaves in a ball $B(\iD\sigma_+(\ell),c_0\,|\ell|)$, centered at $\iD\sigma_+(\ell)$ with radius $c_0 |\ell|$, when $(\eps,\xi)$ is $\cO(|\ell|)$-close to $(0,\xi_+(\ell))$, with $c_0>0$ sufficiently small to guarantee that only two eigenvalues (counted with algebraic multiplicity) lie in the ball. To do so, we need to pick a basis for the associated generalized eigenspace. We present here the choice we make at zero amplitude.

We introduce as eigenvectors of $L^0_{\ell,\xi}$ respectively for $\lambda_{1,\ell,\xi}^{-}$ and $\lambda_{-1,\ell,\xi}^{+}$
\begin{align} \label{basisofL0}
q_-^0(\ell,\xi,\cdot)&:=\eD^{i\kappa\,\cdot\,} \bp \iD\alpha_-(\ell, \xi)\\ 1\ep\,,&
q_+^0(\ell, \xi,\cdot)&:=\eD^{-\iD\kappa\,\cdot\,} \bp-\iD\alpha_+(\ell, \xi)\\1\ep\,, 
\end{align}
with
\be\label{def-omega-alphapm}
\alpha_{\pm}(\ell, \xi):=\frac{\sqrt{|\mp \kappa|_{\ell,\xi} \,\tah(|\mp \kappa|_{\ell,\xi})}}{\sqrt{\mu_0}}
=\frac{F(|\mp \kappa|_{\ell,\xi})\,F(\kappa)}{\kappa}\,,
\ee 
where $F$ is as in Lemma~\ref{F-aux}.

Before providing a dual basis at zero-amplitude, we observe that the Bloch symbol $L^\eps_{\ell,\xi}$ also inherits the Hamiltonian symmetry 
\begin{align*}
L^\eps_{\ell,\xi}&=J\,A^\eps_{\ell,\xi}\,,&
A^\eps_{\ell,\xi}&:=\bp
    -(\mu_0+\underline{a}_\eps)&(1-\underline{V}_\eps)\,(\d_x+\iD\xi)\\
    -(\d_x+\iD\xi)\left((1-\underline{V}_\eps)\,\cdot\,\right)& -G_{\ell,\xi}[1+\underline{\eta}_\eps]
\ep\,,
\end{align*}
with $J$ as in \eqref{Ham} thus skew-symmetric and invertible and $A^\eps_{\ell,\xi}$ symmetric. In particular the generalized eigenspace of the adjoint operator $(L^0_{\ell,\xi})^*$ associated with the spectrum $\{-\lambda_{1,\ell,\xi}^-,-\lambda_{-1,\ell,\xi}^+\}$ is spanned by $J^{-1}q_-^0(\ell,\xi,\cdot)$ and $J^{-1}q_+^0(\ell,\xi,\cdot)$, that are eigenvectors of respective eigenvalues $-\lambda_{1,\ell,\xi}^-$ and $-\lambda_{-1,\ell,\xi}^+$. To derive a basis dual to $(q_-^0(\ell,\xi,\cdot),q_+^0(\ell,\xi,\cdot))$ we deduce normalizing factors from
\begin{align*}%\label{mateps0}
\langle J^{-1}\,q_\pm^0(\ell,\xi, \cdot),q_\mp^0(\ell,\xi,\cdot)\rangle&=0\,,&
\langle \,\iD J^{-1}q_\pm^0(\ell,\xi,\cdot), q_\pm^0(\ell,\xi,\cdot)\rangle&=\mp2\alpha_\pm(\ell, \xi)\,.
\end{align*}
In the foregoing and henceforth we use $\langle\,\cdot\,,\,\cdot\,\rangle$ to denote canonical complex inner products, with the convention that the product is skew-linear in its first component and linear in its second. On $L^2(\R/T\Z;\C^2)$, the convention is
\[
\langle f, g \rangle:= \frac{1}{T}\int_0^T \langle f(x),\,g(x)\rangle\,\dif x
\,=\,\frac{1}{T}\int_0^T\overline{f(x)}\cdot g(x)\,\dif x\,.
\]

Therefore we obtain the sought dual basis $(\tq_-^0(\ell,\xi,\cdot),\tq_+^0(\ell,\xi,\cdot))$ by setting
\begin{align*}
\tq_-^0(\ell,\xi,\cdot)&:=\frac{\iD J^{-1}q_-^0(\ell,\xi,\cdot)}{2\alpha_-(\ell, \xi)}\,,&
\tq_+^0(\ell,\xi,\cdot)&:=-\frac{\iD J^{-1}q_+^0(\ell,\xi,\cdot)}{2\alpha_+(\ell, \xi)}\,.
\end{align*}

\section{Instability index}

\subsection{Definition of the index}

We now extend to small $\eps$ the bases of Subsection~\ref{ss:choice} with Kato's transformation functions \cite[Chapter~Two, \S 4]{Katobook}. Explicitly, from Proposition~\ref{p:double} and the continuity of $L^\eps_{\ell,\xi}$ with respect to $(\ell,\xi)$ there exist positive $\ell_0$, $c_0$, $c_0'$ such that when $0<|\ell|<\ell_0$ and $|\big(\eps,\xi-\xi_+(\ell)\big)|\leq c_0'\,|\ell|$, the Riesz spectral projector 
\[ 
\Pi_{\ell,\xi}(\eps):=\frac{1}{2\pi \iD} \oint_{\d B(\,\iD\sigma_+(\ell),c_0\,|\ell|)}
(\lambda I-L_{\ell,\xi}^\eps)^{-1}\,\dif\lambda
\]
is well defined and thus one obtains\footnote{Extension operators are non unique and various processes are proposed in \cite{Katobook}.} an extension operator $\cU_{\ell,\xi}(\eps)$ by solving the Cauchy problem
\begin{align*}%\label{def-extop-delta}
\d_{\eps}\cU_{\ell,\xi}\,
&=\big[\d_\eps\Pi_{\ell,\xi},\Pi_{\ell,\xi}\big]\,\cU_{\ell,\xi}\,,& 
\cU_{\ell,\xi}(0)&=I\,,
\end{align*}
with $[\cdot,\cdot]$ denoting commutator. Then we set
\begin{align*}%\label{basisofL}
q_-^\eps(\ell,\xi,\cdot)&:=\cU_{\ell,\xi}(\eps)\,q_-^0(\ell,\xi,\cdot)\,,& 
q_+^\eps(\ell,\xi,\cdot)&:=\cU_{\ell,\xi}(\eps)\,q_+^0(\ell,\xi,\cdot)\,,
\end{align*}
so that $(q_-^\eps(\ell,\xi,\cdot),q_+^\eps(\ell,\xi,\cdot))$ forms an ordered basis of the range of $\Pi_{\ell,\xi}(\eps)$, the generalized eigenspace of $L^\eps_{\ell,\xi}$ associated with its spectrum in $B(\,\iD\sigma_+(\ell),c_0\,|\ell|)$. 

Likewise one may solve a Cauchy problem for the projector $\Pi_{\ell,\xi}(\eps)^*$ to obtain a dual extension operator $\cV_{\ell,\xi}(\eps)$, that turns out to be equal to $(\cU_{\ell,\xi}(\eps)^*)^{-1}$. Then we obtain a suitable dual basis by setting
\begin{align*}%\label{basisofL}
\tq_-^\eps(\ell,\xi,\cdot)&:=\cV_{\ell,\xi}(\eps)\,\tq_-^0(\ell,\xi,\cdot)\,,& 
\tq_+^\eps(\ell,\xi,\cdot)&:=\cV_{\ell,\xi}(\eps)\,\tq_+^0(\ell,\xi,\cdot)\,.
\end{align*}
From the Hamiltonian symmetry one then deduces that one still has
\begin{align*}
\tq_-^\eps(\ell,\xi,\cdot)&=\frac{\iD J^{-1}q_-^\eps(\ell,\xi,\cdot)}{2\alpha_-(\ell, \xi)}\,,&
\tq_+^\eps(\ell,\xi,\cdot)&=-\frac{\iD J^{-1}q_+^\eps(\ell,\xi,\cdot)}{2\alpha_+(\ell, \xi)}\,.
\end{align*}
We refer to \cite{NRS-EP} for details on the latter. Alternatively to skip the dual extension we could have proved that normalization relations are independent of $\eps$ and defined $(\tq_-^\eps(\ell,\xi,\cdot),\tq_+^\eps(\ell,\xi,\cdot))$ in this way.

As a result when $0<|\ell|<\ell_0$ and $|\big(\eps,\xi-\xi_+(\ell)\big)|\leq c_0'\,|\ell|$, the matrix
\begin{align*}
 D_{\ell,\xi}^{\eps}= \bp
\langle \tq_+^\eps(\ell, \xi, \cdot), L_{\ell,\xi}^\eps q_+^\eps(\ell, \xi, \cdot)\rangle& 
\langle \tq_+^\eps(\ell, \xi, \cdot), L_{\ell,\xi}^\eps q_-^\eps(\ell, \xi, \cdot)\rangle\\[10pt]
\langle \tq_-^\eps(\ell, \xi, \cdot), L_{\ell,\xi}^\eps q_+^\eps(\ell, \xi, \cdot)\rangle& 
\langle \tq_-^\eps(\ell, \xi, \cdot), L_{\ell,\xi}^\eps q_-^\eps(\ell, \xi, \cdot)\rangle
\ep
\end{align*}
satisfies 
\begin{equation}\label{spec2}
\sigma_{{\rm per}}(L_{\ell,\xi}^\eps)\cap B\big(\iD\sigma_+(\ell),c_0|\ell|\big)
\,=\,\sigma(D_{\ell, \xi}^\eps).
\end{equation}
By design, when $0<|\ell|< \ell_0$ and $|\xi-\xi_+(\ell)|\leq c_0'\,|\ell|$,
\[
D_{\ell,\xi}^0\,=\,
\bp
\lambda_{-1,\ell,\xi}^+&0\\
0&\lambda_{1,\ell, \xi}^-
\ep\,.
\]
From the symmetry of $A_{\ell,\xi}^\eps$ and the relation between direct and dual bases stems that
\[ %\label{Dmateps}
D_{\ell,\xi}^\eps
=\frac{\iD}{2}\bp
-\dfrac{b_+(\ell,\xi,\eps)}{\alpha_+(\ell,\xi)}&
-\dfrac{a(\ell,\xi,\eps)}{\alpha_+(\ell, \xi)}\\[1.5em]
\dfrac{\overline{a(\ell,\xi,\eps)}}{\alpha_-(\ell,\xi)}&
\dfrac{b_-(\ell,\xi,\eps)}{\alpha_-(\ell, \xi)}
\ep
\] 
with
\begin{align} %\label{defab}
a(\ell,\xi,\eps)&:=\langle q_+^\eps(\ell,\xi,\cdot),A_{\ell,\xi}^\eps  q_-^\eps(\ell,\xi,\cdot)\rangle\,,
\label{defa}\\
%\,=\,\overline{\langle q_-^\eps(\ell,\xi,\cdot),A_{\ell,\xi}^\eps q_+^\eps(\ell,\xi,\cdot)\rangle}\,,\\
b_+(\ell,\xi,\eps)&:=\langle q_+^\eps(\ell,\xi,\cdot), A_{\ell,\xi}^\eps q_+^\eps(\ell,\xi,\cdot)\rangle\,,
\nonumber\\
b_-(\ell,\xi,\eps)&:=\langle q_-^\eps(\ell,\xi,\cdot),A_{\ell,\xi}^\eps q_-^\eps(\ell,\xi,\cdot)\rangle\,.
\nonumber
\end{align} 
thus $b_+(\ell,\xi,\eps)\in\R$, $b_-(\ell,\xi,\eps)\in\R$. 

Note that here most mathematical objects blow up in the limit $\ell\to 0$ and we do not try to track how they blow up.

Eigenvalues of $D_{\ell,\xi}^\eps$ are the roots of
\begin{equation}\label{e:char}
\left(\lambda-\frac{\iD}{4}\left(-\frac{b_+}{\alpha_+}
+\frac{b_-}{\alpha_-}\right)\right)^2
\,=\,\frac14\,\left(
-\frac14\left(\frac{b_+}{\alpha_+}
+\frac{b_-}{\alpha_-}\right)^2
+\frac{|a|^2}{\alpha_-\alpha_+}
\right)\,,
\end{equation}
where we have omitted to mark dependencies on $(\ell,\xi,\eps)$. To achieve instability we want to make the right-hand side of the foregoing equation as positive as possible. 

The following lemma makes a first step in this direction.

\begin{lemma}\label{l:curve}
There exist $\ell_*$ such that for any $0<|\ell|\leq \ell_*$, there exist $\eps_\ell>0$ and an analytic map $\xi_\ell:\,(-\eps_\ell,\eps_\ell)\to[\xi_+(\ell)-c_0'|\ell|,\xi_+(\ell)+c_0'|\ell|]$ such that $\xi_\ell(0)=\xi_+(\ell)$ and for any $|\eps|\leq \eps_\ell$
\[
\frac{b_+(\ell,\xi_\ell(\eps),\eps)}{\alpha_+(\ell,\xi_\ell(\eps))}+\frac{b_-(\ell,\xi_\ell(\eps),\eps)}{\alpha_-(\ell,\xi_\ell(\eps))}\,=\,0\,.
\]
\end{lemma}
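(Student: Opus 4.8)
The plan is to find, for each small nonzero $\ell$, a value of $\xi$ near $\xi_+(\ell)$ at which the two diagonal quantities $b_+/\alpha_+$ and $b_-/\alpha_-$ balance each other out. First I would record what happens at $\eps=0$: by construction $D^0_{\ell,\xi}$ is diagonal with entries $\lambda^+_{-1,\ell,\xi}$ and $\lambda^-_{1,\ell,\xi}$, and comparing with the $2\times2$ form of $D^\eps_{\ell,\xi}$ this forces
\[
\frac{b_+(\ell,\xi,0)}{\alpha_+(\ell,\xi)}\,=\,2\,\sigma^+_{-1}(\ell,\xi)\,,\qquad
\frac{b_-(\ell,\xi,0)}{\alpha_-(\ell,\xi)}\,=\,-2\,\sigma^-_{1}(\ell,\xi)\,,
\]
where $\iD\sigma^\#_{n}$ denotes the corresponding $\lambda^\#_{n,\ell,\xi}$. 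Hence the function
\[
\Psi(\ell,\xi,\eps)\,:=\,\frac{b_+(\ell,\xi,\eps)}{\alpha_+(\ell,\xi)}+\frac{b_-(\ell,\xi,\eps)}{\alpha_-(\ell,\xi)}
\]
satisfies $\Psi(\ell,\xi,0)=2\big(\sigma^+_{-1}(\ell,\xi)-\sigma^-_{1}(\ell,\xi)\big)=-\iD\big(\lambda^+_{-1,\ell,\xi}-\lambda^-_{1,\ell,\xi}\big)=-\varphi(\ell,\xi)$ in the notation of the proof of Lemma~\ref{l:double}, which vanishes exactly at $\xi=\xi_+(\ell)$.

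Next I would apply the implicit function theorem to $\Psi$ at the point $(\ell,\xi_+(\ell),0)$. The map $(\xi,\eps)\mapsto\Psi(\ell,\xi,\eps)$ is analytic (it is built from the analytic objects $q^\eps_\pm$, $A^\eps_{\ell,\xi}$, $\alpha_\pm$ coming from Proposition~\ref{p:Stokes}, Lemma~\ref{lem-expa}, Lemma~\ref{lem-expG} and the Kato extension, all analytic in $\eps$ and in $(\ell,\xi)$ in the relevant range), it vanishes at $\eps=0$, $\xi=\xi_+(\ell)$, and the key nondegeneracy is
\[
\d_\xi\Psi(\ell,\xi_+(\ell),0)\,=\,-\d_\xi\varphi(\ell,\xi_+(\ell))\,\neq\,0
\]
for $0<|\ell|\le\ell_*$. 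This last fact I would extract from the proof of Lemma~\ref{l:double}: there $\varphi(\ell,\xi)=\ell^2\Phi(\ell^2,\xi^2/\ell^2)$ on the relevant curve, $\d_\Xi\Phi(0,\underline\Xi)=\tfrac12\d_\xi^2\varphi(0,0)\neq0$, and $\xi_+(\ell)=\ell\sqrt{\Xi_+(\ell^2)}$; differentiating gives $\d_\xi\varphi(\ell,\xi_+(\ell))=2\ell\sqrt{\Xi_+(\ell^2)}\cdot\ell^2\d_2\Phi(\ell^2,\Xi_+(\ell^2))/\ell^2+\dots$, which to leading order in $\ell$ is a nonzero multiple of $\ell$, so after possibly shrinking $\ell_*$ it is nonzero for all $0<|\ell|\le\ell_*$. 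The implicit function theorem then yields, for each such $\ell$, an $\eps_\ell>0$ and an analytic $\xi_\ell:(-\eps_\ell,\eps_\ell)\to\R$ with $\xi_\ell(0)=\xi_+(\ell)$ and $\Psi(\ell,\xi_\ell(\eps),\eps)=0$; continuity and $\xi_\ell(0)=\xi_+(\ell)$ let me shrink $\eps_\ell$ further so that $\xi_\ell$ takes values in $[\xi_+(\ell)-c_0'|\ell|,\xi_+(\ell)+c_0'|\ell|]$, keeping us inside the range where $D^\eps_{\ell,\xi}$ and hence $\Psi$ are defined.

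The main obstacle I anticipate is the bookkeeping around the $\ell\to0$ degeneration: as the text warns, $b_\pm$, $\alpha_\pm$ and the spectral projector all blow up as $\ell\to0$, so the implicit function theorem must be applied at fixed $\ell$ with constants $\eps_\ell$ that are allowed to depend on $\ell$ (exactly as stated), and I must be careful that $\d_\xi\Psi(\ell,\xi_+(\ell),0)$ does not degenerate faster than the size of $\Psi$ itself forces the neighborhood to shrink — this is why I reduce everything to $\varphi$, whose $(\ell,\xi)$-behavior near the conical crossing is fully understood from Lemma~\ref{l:double} via the polar-type rescaling $\xi=\ell\sqrt{\Xi}$. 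A secondary point to check is that $\d_\xi\Psi$ and $\d_\eps\Psi$ at $\eps\neq0$ do not matter: only the $\eps=0$ slice enters the hypotheses of the implicit function theorem, and analyticity in $(\xi,\eps)$ jointly is what we already have. Everything else — analyticity of $\xi_\ell$, oddness-type symmetries if desired — follows formally once the IFT is in place.
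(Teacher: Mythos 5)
Your argument is essentially the paper's own proof: the paper introduces the same function (called $\cZ$ there), identifies $\cZ(\xi,0)$ as a constant multiple of $\varphi$ from Lemma~\ref{l:double}, checks $\d_\xi\cZ(\xi_+(\ell),0)\sim -2\,\d_\xi^2\varphi(0,0)\,\xi_+(\ell)\neq0$ for small nonzero $\ell$, and applies the implicit function theorem at fixed $\ell$. Your identities $b_\pm(\ell,\xi,0)/\alpha_\pm(\ell,\xi)=\pm2\sigma^\pm_{\mp1}$ have sign/factor slips (one gets $b_+/\alpha_+=2\iD\lambda^+_{-1,\ell,\xi}$ and $b_-/\alpha_-=-2\iD\lambda^-_{1,\ell,\xi}$, so $\Psi(\ell,\xi,0)=2\iD(\lambda^+_{-1,\ell,\xi}-\lambda^-_{1,\ell,\xi})$), but since all that matters is that $\Psi(\ell,\cdot,0)$ is a fixed nonzero multiple of $\varphi(\ell,\cdot)$, this is harmless and the proof stands.
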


\begin{proof}
The result follows from the implicit function theorem applied to 
\[
\cZ:\quad(\xi,\eps)\mapsto \frac{b_+(\ell,\xi,\eps)}{\alpha_+(\ell,\xi)}+\frac{b_-(\ell,\xi,\eps)}{\alpha_-(\ell,\xi)}\,.
\]
To check the required assumptions we observe that
\[
\cZ(\xi,0)
\,=\,2\iD\,\left(\lambda_{-1,\ell,\xi}^+-\lambda_{1,\ell, \xi}^-\right)
\,=\,-2\varphi(\ell,\xi)
\]
with $\varphi$ as in the proof of Lemma~\ref{l:double}. Thus $\cZ(\xi_+(\ell),0)=0$ and
\begin{align*}
\d_\xi \cZ(\xi_+(\ell),0)\,=\,-2\,\d_\xi\varphi(\ell,\xi_+(\ell))\stackrel{\ell\to0}{\sim}-2\,\d_\xi^2\varphi(0,0)\,\xi_+(\ell)\neq 0\,.
\end{align*}
Hence the lemma.
\end{proof}

We conclude the present subsection by introducing an $\ell$-dependent instability index.

\begin{definition}
With $\ell_0$ and $\xi_+$ as in Proposition~\ref{p:double} and $a$ as in \eqref{defa}, we define\\
for $0<|\ell|< \ell_0$, 
\[
\Gamma_\ell:=\frac12\,\d_\eps^2 a(\ell,\xi_+(\ell),0)\,.
\]
\end{definition}

\begin{proposition}\label{Prop:Gamma}
There exists $\ell_*>0$ such that if $0<|\ell|\leq \ell_*$ is such that $\Gamma_\ell$ is not zero then there exist positive $\eps_\ell'$ and $c_\ell$ such that for any $0<|\eps|\leq \eps_\ell'$ the Bloch symbol $L^\eps_{\ell,\xi_\ell(\eps)}$ (with $\xi_\ell(\eps)$ as in Lemma~\ref{l:curve}) possesses an eigenvalue of real part larger than $c_\ell\,\eps^2$. 
\end{proposition}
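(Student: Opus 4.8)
The plan is to analyze the characteristic equation~\eqref{e:char} along the curve $\xi=\xi_\ell(\eps)$ provided by Lemma~\ref{l:curve}, where by construction the symmetric combination $\frac{b_+}{\alpha_+}+\frac{b_-}{\alpha_-}$ vanishes identically in $\eps$. On that curve the right-hand side of~\eqref{e:char} reduces to $\frac14\,\frac{|a(\ell,\xi_\ell(\eps),\eps)|^2}{\alpha_-(\ell,\xi_\ell(\eps))\,\alpha_+(\ell,\xi_\ell(\eps))}$, which is manifestly nonnegative since $\alpha_\pm>0$ near $(0,\xi_+(\ell))$. Hence $D^\eps_{\ell,\xi_\ell(\eps)}$ has an eigenvalue with real part equal to $\tfrac12\,\frac{|a(\ell,\xi_\ell(\eps),\eps)|}{\sqrt{\alpha_-\alpha_+}}$, and by~\eqref{spec2} this is an eigenvalue of $L^\eps_{\ell,\xi_\ell(\eps)}$. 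Everything then comes down to lower bounding $|a(\ell,\xi_\ell(\eps),\eps)|$ by a positive multiple of $\eps^2$ for small $\eps$.

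First I would record that $a(\ell,\xi,\eps)$ is analytic in $\eps$ (this follows from the analyticity of the Stokes expansion in Proposition~\ref{p:Stokes}, the expansions of Lemmas~\ref{lem-expa} and~\ref{lem-expG}, and the smoothness of Kato's transformation operators $\cU_{\ell,\xi}(\eps)$, $\cV_{\ell,\xi}(\eps)$, holding $\ell$ fixed and $\xi$ in the admissible window). Next I would compute the low-order $\eps$-coefficients of $a(\ell,\xi_+(\ell),\eps)$. The zeroth order term vanishes: $a(\ell,\xi,0)=\langle q_+^0,A^0_{\ell,\xi}q_-^0\rangle$ equals (up to the factor $\tfrac{2}{\iD}\alpha_+$) the off-diagonal entry of $D^0_{\ell,\xi}$, which is zero since $L^0_{\ell,\xi}$ is already diagonalized in the chosen basis. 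The first order term $\d_\eps a(\ell,\xi_+(\ell),0)$ also vanishes: expanding $A^\eps_{\ell,\xi}$ one sees $\d_\eps A^0_{\ell,\xi}$ is the multiplication/convolution operator built from $a_1$, $V_1$, $\eta_1$ of~\eqref{etapsi} and Lemma~\ref{lem-expa}, all of which are pure first-harmonic trigonometric monomials ($\cos(\kappa x)$, $\sin(\kappa x)$); since $q_+^0$ carries frequency $-\kappa$ and $q_-^0$ carries frequency $+\kappa$, the inner product $\langle q_+^0, (\d_\eps A^0)q_-^0\rangle$ involves only Fourier modes $\pm3\kappa$ and hence integrates to zero, while the Kato-derivative contributions $\langle \d_\eps\tq_+^0\!,A^0 q_-^0\rangle + \langle\tq_+^0\!,A^0\d_\eps q_-^0\rangle$ vanish because $A^0_{\ell,\xi}q^0_\pm$ lie in the respective eigendirections and the extended bases are dual. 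Therefore the $\eps$-expansion of $a$ starts at order $\eps^2$, with leading coefficient exactly $\Gamma_\ell=\tfrac12\d_\eps^2 a(\ell,\xi_+(\ell),0)$ by definition.

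It remains to transfer this from $\xi=\xi_+(\ell)$ to $\xi=\xi_\ell(\eps)$. Since $\xi_\ell(0)=\xi_+(\ell)$ and $\xi_\ell$ is analytic with $\xi_\ell(\eps)-\xi_+(\ell)=\cO(\eps)$, and since $a$ is jointly smooth in $(\xi,\eps)$, Taylor expansion gives $a(\ell,\xi_\ell(\eps),\eps) = a(\ell,\xi_+(\ell),\eps) + \d_\xi a(\ell,\xi_+(\ell),\eps)\,(\xi_\ell(\eps)-\xi_+(\ell)) + \cO(\eps^2)$; because $a(\ell,\xi,0)\equiv 0$ and $\d_\eps a(\ell,\xi,0)\equiv 0$ for $\xi$ near $\xi_+(\ell)$ — the two vanishing computations above actually hold for the whole admissible $\xi$-window, not just the point $\xi_+(\ell)$ — both $a(\ell,\xi_+(\ell),\eps)$ and $\d_\xi a(\ell,\xi_+(\ell),\eps)$ are themselves $\cO(\eps^2)$, so $a(\ell,\xi_\ell(\eps),\eps) = \Gamma_\ell\,\eps^2 + \cO(\eps^3)$. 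Thus if $\Gamma_\ell\neq 0$ then $|a(\ell,\xi_\ell(\eps),\eps)|\geq \tfrac12|\Gamma_\ell|\,\eps^2$ for $|\eps|\leq\eps_\ell'$ small, and the eigenvalue exhibited above has real part at least $\tfrac{|\Gamma_\ell|}{4\sqrt{\alpha_-(\ell,\xi_+(\ell))\alpha_+(\ell,\xi_+(\ell))}}\,\eps^2=:c_\ell\,\eps^2$ after shrinking $\eps_\ell'$ to absorb the error and the $\eps$-variation of $\alpha_\pm$.

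The main obstacle is not any single estimate but the bookkeeping in the second paragraph: one must verify carefully that the first-order term in $\eps$ genuinely vanishes, which requires controlling the Kato-transformation contributions $\d_\eps q^0_\pm$ and $\d_\eps\tq^0_\pm$ and using both the Hamiltonian duality relations between $q^\eps_\pm$ and $\tq^\eps_\pm$ and the harmonic-selection mechanism (frequencies $\pm\kappa$ against first-harmonic coefficients). This is exactly the structural input flagged in the introduction — that the $\eps^m$-coefficient of the profile is a trigonometric polynomial of degree $\leq m$, so that a double eigenvalue whose eigenvectors differ in frequency by $2\kappa$ is first coupled at order $\eps^2$ — and it is what makes $\Gamma_\ell$ the correct leading-order object. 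Everything downstream (analyticity, Taylor expansion, the sign of $\alpha_\pm$, the passage from $D^\eps$ to $L^\eps$ via~\eqref{spec2}) is routine once this is in place.
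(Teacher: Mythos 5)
Your proposal is correct and follows essentially the same route as the paper: reduce via \eqref{spec2}, \eqref{e:char} and Lemma~\ref{l:curve} to showing $a(\ell,\xi_\ell(\eps),\eps)=\Gamma_\ell\,\eps^2+\cO(\eps^3)$, then establish $a(\ell,\cdot,0)\equiv0$ (diagonality of $D^0_{\ell,\xi}$) and $\d_\eps a(\ell,\cdot,0)\equiv0$ (harmonic selection: first-order terms shift Fourier modes by at most $\kappa$ while the eigenvectors differ by $2\kappa$), the paper handling the Kato-derivative contributions by the same frequency argument where you invoke duality of the extended bases. Only cosmetic slips: the integrand in $\langle q_+^0,(\d_\eps A^0)q_-^0\rangle$ carries modes $\kappa$ and $3\kappa$ (not $\pm3\kappa$), and your Taylor remainder in $\xi$ is in fact $\cO(\eps^3)$ (since $\d_\xi^2 a(\ell,\cdot,0)\equiv0$), which is what your final conclusion requires.
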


The foregoing proposition reduces the proof of Theorem~\ref{th:main} to the proof that if $\ell$ is sufficiently small and nonzero then $\Gamma_\ell\neq0$, a claim that we shall prove in the next subsection.

\begin{proof}
In view of \eqref{spec2}, \eqref{e:char} and Lemma~\ref{l:curve} it is sufficient to prove that
\[
a(\ell,\xi_\ell(\eps),\eps)
\,\stackrel{\eps\to0}{=}\,\Gamma_\ell\,\eps^2\,+\,\cO(\eps^3)\,.
\]
Since $\xi_\ell(\eps)\stackrel{\eps\to0}{=}\xi_+(\ell)+\cO(\eps)$ it is sufficient to prove
\begin{align*}
\d_\xi a(\ell,\xi_+(\ell),0)&=0\,,&
\d_\xi^2 a(\ell,\xi_+(\ell),0)&=0\,,&
\d_\eps a(\ell,\xi_+(\ell),0)&=0\,,&
\d_{\xi\,\eps}^2 a(\ell,\xi_+(\ell),0)&=0\,.
\end{align*}
First we recall that $D^0_{\ell,\xi}$ is diagonal for any $\xi$, hence $a(\ell,\cdot,0)\equiv0$, which implies the first two vanishings. In order to deduce the remaining vanishings thus the lemma we now prove that $\d_\eps a(\ell,\cdot,0)\equiv0$.

To do so we borrow from \cite{NRS-EP} a piece of notation that we shall use heavily in the next subsection. When $M$ is an operator on $L^2(\R/T\Z)\times L^2(\R/T\Z)$ with domain including trigonometric polynomials and $(m,p)\in\Z^2$, we define an operator $M^m_p:\C^2\to\C^2$ by
\be\label{def-Mmp}
M^m_p\,(X):=\langle \eD^{\iD\,(m+p)\,\kappa\,\cdot},M(\eD^{\iD\,m\,\kappa\,\cdot}\,X)\rangle\,.
\ee
Then since, with notation from Proposition~\ref{p:Stokes}, $(\eta_1,\psi_1)\in \cos(\kappa\,\cdot)\,\C^2$ we observe that 
\begin{align*}
\left(\d_\eps (L_{\ell,\xi}^\eps)|_{\eps=0}\right)^p_r&\equiv 0&\textrm{thus}&&
\left(\d_\eps\Pi_{\ell,\xi}(0)\right)^p_r&\equiv 0&
\textrm{when }|r|>1\,.
\end{align*}
Through
\begin{align*}
q_\pm^\eps(\ell, \xi,\cdot)&\,=\,\Pi_{\ell,\xi}(\eps)(q_\pm^
\eps(\ell, \xi,\cdot))\,,&
L_{\ell,\xi}^\eps q_\pm^\eps(\ell, \xi,\cdot)&\,=\,\Pi_{\ell,\xi}(\eps)(L_{\ell,\xi}^\eps q_\pm^\eps(\ell, \xi,\cdot))\,,&
\end{align*}
this implies that
\begin{align*}
\d_\eps q_\pm^0(\ell, \xi,\cdot)\,,\
\d_\eps(\eps\mapsto L_{\ell,\xi}^\eps q_\pm^\eps(\ell, \xi,\cdot))_{|\eps=0}
&\in \Span\left(\left\{\,\eD^{\iD\,(\mp1+p)\,\kappa\,\cdot\,}\,X\,;\,|p|\leq 1\,,\,
X\in\C^2\,\right\}\right)\,.
\end{align*}
By orthogonality of trigonometric monomials this proves the claim hence the proposition.
\end{proof}

\subsection{Computation of the index}

Our goal is now to gain information on $\Gamma_\ell$.

As a preliminary we complete our notational set. First we denote $V_\pm(\ell)$ the vectors of $\C^2$ appearing in the definitions of $q_\pm^0(\ell, \xi_+(\ell),\cdot)$, explicitly
\begin{align*}
V_\pm(\ell)&:=\begin{pmatrix}\mp\iD\alpha_{\pm}(\ell,\xi_+(\ell))\\ 1\end{pmatrix}\,.
\end{align*}
Then we introduce matrices $\cM_{\pm}(\ell)$ encoding $\Pi_{\ell,\xi_+(\ell)}^0$,
\begin{align*}\label{def-cMpm}
\cM_{\pm}(\ell)
&:=\frac{1}{2}\bp
1 & \mp \iD\,\alpha_\pm(\ell,\xi_+(\ell)) \\[5pt]
\mp\frac{1}{\iD\,\alpha_\pm(\ell,\xi_+(\ell))} & 1
\ep\,,
\end{align*}
so that when $(m,X)\in\Z\times \C^2$
\begin{align*}
\Pi^0_{\ell,\xi_+(\ell)} (\eD^{\iD\kappa m \,\cdot}X)
\,=\, \begin{cases}
\,\eD^{\iD\kappa\,\cdot}\cM_-(\ell)\,X\,,&\textrm{if }m=1\,,\\
\,\eD^{-\iD\kappa\,\cdot}\cM_+(\ell)\,X\,,&\textrm{if }m=-1\,,\\
\,0\,, &\textrm{otherwise}\,.
\end{cases}
\end{align*}
Likewise we introduce the matrix
\[
A_0(\ell):=\left(\iD\,\sigma_+(\ell)\,I-
\bp
\iD\xi_+(\ell)  &|(\ell,\xi_+(\ell))|\tah(|(\ell,\xi_+(\ell))|) \\
-\mu_0 &\iD\xi_+(\ell) \ep
\right)^{-1}
\]
to encode the action on constant functions of the non singular part of $(\lambda I-L^0_{\ell,\xi_+(\ell)})^{-1}$ at $\lambda=\iD\,\sigma_+(\ell)$. Finally for $m\in\{0,1,2\}$, we set
\begin{align*}
L_\ell^{[m]}&:=\frac{1}{m!}\d_\eps^m (L_{\ell,\xi_+(\ell)}^\eps)|_{\eps=0}\,,&
\cU^{[m]}_\ell&:=\frac{1}{m!}\d_\eps^m \cU_{\ell,\xi_+(\ell)}(0)\,,&
\Pi^{[m]}_\ell&:=\frac{1}{m!}\d_\eps^m \Pi_{\ell,\xi_+(\ell)}(0)\,.
\end{align*}

The following lemma keeps track of the Hamiltonian symmetry.

\begin{lemma}\label{l:dual}
The operator 
\[
S:\ \cM_{2\times 2}(\C)\mapsto \cM_{2\times 2}(\C)\,,\qquad
B\mapsto J\,B^*\,J^{-1}
\]
satisfies 
\begin{align*}
S[\cM_{\pm}(\ell)]&=\cM_{\pm}(\ell)\,,&
S[A_0(\ell)]&=-A_0(\ell)\,,
\end{align*}
and for any $(p,q)\in\Z$, $m\in\{0,1,2\}$,
\begin{align*}
S[(L_\ell^{[m]})^p_q]&=-(L_\ell^{[m]})^{p+q}_{-q}\,,&
S[(\Pi_\ell^{[m]})^p_q]=((\Pi_\ell^{[m]})^{p+q}_{-q}\,.
\end{align*}
\end{lemma}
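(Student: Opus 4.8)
The plan is to reduce all four identities to a single ``transfer principle'' relating the matrix map $S$ to an operator-level conjugation, and then to read off the four claims from the Hamiltonian structure $L^\eps_{\ell,\xi}=J\,A^\eps_{\ell,\xi}$. Throughout, I would write $\cS(M):=J\,M^*\,J^{-1}$ for the analogous conjugation acting on densely defined operators on $L^2(\R/T\Z;\C^2)$, so that $\cS$ restricts to $S$ on constant-coefficient (matrix) operators. First I would record the elementary algebra of $S$: since $J^*=-J$ and $(J^{-1})^*=-J^{-1}$, a direct computation gives $S\circ S=\mathrm{Id}$, $S[I]=I$, $S[BC]=S[C]\,S[B]$, and conjugate-linearity of $S$; in particular $S[B^{-1}]=S[B]^{-1}$ for every invertible $B$.

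Second, I would prove the transfer identity: for any operator $M$ whose domain contains the trigonometric monomials and any $(m,p)\in\Z^2$,
\[
S\bigl[(M)^m_p\bigr]\ =\ \bigl(\cS(M)\bigr)^{m+p}_{-p}\,.
\]
This combines two observations. Unwinding \eqref{def-Mmp} and the convention on $\langle\cdot,\cdot\rangle$, one sees that $M^m_p$ is characterized by $\langle Y,M^m_pX\rangle_{\C^2}=\langle \eD^{\iD(m+p)\kappa\,\cdot}\,Y,\,M(\eD^{\iD m\kappa\,\cdot}\,X)\rangle$ for all $X,Y\in\C^2$; feeding in the adjoint relation $\langle Mf,g\rangle=\langle f,M^*g\rangle$ and relabelling $(m,p)\rightsquigarrow(m+p,-p)$ yields $(M^m_p)^*=(M^*)^{m+p}_{-p}$. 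Since $J$ and $J^{-1}$ act fibrewise by constant matrices, they commute with Fourier-coefficient extraction, i.e.\ $(JNJ^{-1})^k_q=J\,N^k_q\,J^{-1}$ for every operator $N$; taking $N=M^*$ and combining with the previous identity gives the transfer identity.

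Third, I would bring in the Hamiltonian symmetry. From $L^\eps_{\ell,\xi}=J\,A^\eps_{\ell,\xi}$ with $A^\eps_{\ell,\xi}$ self-adjoint and $J$ skew-adjoint, $\cS(L^\eps_{\ell,\xi})=J\,(J\,A^\eps_{\ell,\xi})^*\,J^{-1}=J\,A^\eps_{\ell,\xi}\,(-J)\,J^{-1}=-L^\eps_{\ell,\xi}$. For the Riesz projector I would start from $\Pi_{\ell,\xi}(\eps)=\tfrac1{2\pi\iD}\oint_{\d B(\iD\sigma_+(\ell),c_0|\ell|)}(\lambda I-L^\eps_{\ell,\xi})^{-1}\,\dif\lambda$: taking the adjoint reflects the contour about the real axis, sending the circle to $\d B(-\iD\sigma_+(\ell),c_0|\ell|)$ since $\sigma_+(\ell)\in\R$ and reversing its orientation; conjugating by $J$ turns $(\lambda I-(L^\eps_{\ell,\xi})^*)^{-1}$ into $(\lambda I+L^\eps_{\ell,\xi})^{-1}$ via $\cS(L^\eps_{\ell,\xi})=-L^\eps_{\ell,\xi}$; and the substitution $\lambda\mapsto-\lambda$, a rotation by $\pi$ hence orientation-preserving, brings the integral back to the original one, so that the two orientation reversals cancel and $\cS(\Pi_{\ell,\xi}(\eps))=\Pi_{\ell,\xi}(\eps)$. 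Because $\eps$ is real, the adjoint operation, hence $\cS$, commutes with $\tfrac1{m!}\d_\eps^m$ at $\eps=0$ along the analytic families, so $\cS(L^{[m]}_\ell)=-L^{[m]}_\ell$ and $\cS(\Pi^{[m]}_\ell)=\Pi^{[m]}_\ell$; the transfer identity then gives $S[(L^{[m]}_\ell)^p_q]=-(L^{[m]}_\ell)^{p+q}_{-q}$ and $S[(\Pi^{[m]}_\ell)^p_q]=(\Pi^{[m]}_\ell)^{p+q}_{-q}$. Finally, by their very definitions $\cM_-(\ell)=(\Pi^0_{\ell,\xi_+(\ell)})^1_0$ and $\cM_+(\ell)=(\Pi^0_{\ell,\xi_+(\ell)})^{-1}_0$, while $M_0(\ell):=(L^0_{\ell,\xi_+(\ell)})^0_0$ is precisely the matrix inverted in the definition of $A_0(\ell)$, invertible by the separation bound of Proposition~\ref{p:double}; applying the transfer identity with $(m,p)=(1,0)$, $(-1,0)$ and $(0,0)$ yields $S[\cM_\pm(\ell)]=\cM_\pm(\ell)$ and $S[M_0(\ell)]=-M_0(\ell)$, whence by the algebra of $S$ and $\sigma_+(\ell)\in\R$,
\[
S[A_0(\ell)]=S\bigl[(\iD\sigma_+(\ell)I-M_0(\ell))^{-1}\bigr]=\bigl(-\iD\sigma_+(\ell)I-S[M_0(\ell)]\bigr)^{-1}=\bigl(-\iD\sigma_+(\ell)I+M_0(\ell)\bigr)^{-1}=-A_0(\ell)\,.
\]
(Alternatively all four matrix identities may be checked by hand from the explicit action of $S$ on $2\times2$ matrices.)

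Since everything is dictated by the Hamiltonian symmetry, the statement presents no essential obstacle; the one step that genuinely requires care is the contour manipulation in $\cS(\Pi_{\ell,\xi}(\eps))=\Pi_{\ell,\xi}(\eps)$, where one must correctly track the two orientation reversals (complex conjugation of the path, then $\lambda\mapsto-\lambda$) together with the sign in $\cS(L^\eps_{\ell,\xi})=-L^\eps_{\ell,\xi}$ --- it is the parity of these sign changes that makes the $\Pi$-identity carry a $+$ while the $L$-identity carries a $-$.
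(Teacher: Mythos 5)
Your proposal is correct and follows essentially the same route as the paper, whose entire proof consists of invoking the identity $(M^m_p)^*=(M^*)^{m+p}_{-p}$ together with the Hamiltonian symmetry $J\,(L_{\ell,\xi_+(\ell)}^\eps)^*\,J^{-1}=-L_{\ell,\xi_+(\ell)}^\eps$ --- precisely the two ingredients you combine into your ``transfer identity''. The only difference is one of completeness: you spell out the contour manipulation for $\cS(\Pi_{\ell,\xi}(\eps))=\Pi_{\ell,\xi}(\eps)$, the identification of $\cM_\pm(\ell)$ and $A_0(\ell)$ as Fourier blocks of $\Pi^{[0]}_\ell$ and of the resolvent of $(L^{[0]}_\ell)^0_0$, and the anti-homomorphism/conjugate-linearity properties of $S$, all of which the paper leaves implicit.
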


\begin{proof}
This follows readily from the obvious $(M^m_p)^*=(M^*)^{m+p}_{-p}$ and the Hamiltonian symmetry $J\,(L_{\ell,\xi_+(\ell)}^\eps)^*\,J^{-1}=-L_{\ell,\xi_+(\ell)}^\eps$.
\end{proof}

Our main step in the evaluation of $\Gamma_\ell$ is the following proposition, analogous to\footnote{We recall that when quoting \cite{NRS-EP} we refer to the longer arXiv preprint {\tt arXiv:2210.10118} rather than to the shortened published version.} \cite[Proposition~5.5]{NRS-EP}.

\begin{proposition}\label{p:Nformula}
For $0<|\ell|< \ell_0$,
\ba\label{Gamma-exp}
\Gamma_\ell&=-\langle J^{-1}\,V_+(\ell), \cN_{\ell} V_-(\ell)\rangle\,,&
\text{ with}&&
\cN_{\ell}&:=(L_\ell^{[2]})^{1}_{-2}+ (L_\ell^{[1]})_{-1}^0 A_0(\ell)(L_\ell^{[1]})_{-1}^1\,.
\ea
\end{proposition}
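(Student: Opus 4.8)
The plan is to Taylor-expand $\eps\mapsto a(\ell,\xi_+(\ell),\eps)$ to second order; since $\Gamma_\ell=\frac12\,\d_\eps^2a(\ell,\xi_+(\ell),0)$ is by definition the coefficient of $\eps^2$, identifying it with $-\langle J^{-1}V_+(\ell),\cN_\ell V_-(\ell)\rangle$ is all that is required. First I would put the index in Hamiltonian form: since $L^\eps_{\ell,\xi}=J\,A^\eps_{\ell,\xi}$ and $J$ from \eqref{Ham} is real, skew-symmetric, with $J^{-1}=-J$, hence $(J^{-1})^*=-J^{-1}$, definition \eqref{defa} reads
\[
a(\ell,\xi,\eps)=\big\langle q_+^\eps(\ell,\xi,\cdot),\,J^{-1}L^\eps_{\ell,\xi}\,q_-^\eps(\ell,\xi,\cdot)\big\rangle=-\big\langle J^{-1}q_+^\eps(\ell,\xi,\cdot),\,L^\eps_{\ell,\xi}\,q_-^\eps(\ell,\xi,\cdot)\big\rangle\,.
\]
Smoothness in $\eps$ at $(\ell,\xi)=(\ell,\xi_+(\ell))$ of the right-hand side follows from the regularity of the Dirichlet--Neumann symbols (Lemmas~\ref{lem-expa}, \ref{lem-expG} and the conformal-coordinate refinement) together with Kato's construction of $\cU_{\ell,\xi}(\eps)$. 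Writing $q_\pm^{(m)}$ for the $\eps^m$-Taylor coefficient of $\eps\mapsto q_\pm^\eps(\ell,\xi_+(\ell),\cdot)$, so that $q_\pm^{(0)}=q_\pm^0(\ell,\xi_+(\ell),\cdot)=\eD^{\mp\iD\kappa\,\cdot}V_\pm(\ell)$ and $q_\pm^{(m)}=\cU^{[m]}_\ell q_\pm^{(0)}$, the sought coefficient is $-\sum_{i+j+k=2}\big\langle J^{-1}q_+^{(i)},L_\ell^{[j]}q_-^{(k)}\big\rangle$ (here $L_\ell^{[0]}=L^0_{\ell,\xi_+(\ell)}$).

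\textbf{The four structural inputs.} \emph{(i)} At $\xi=\xi_+(\ell)$, $L_\ell^{[0]}q_-^{(0)}=\iD\sigma_+(\ell)\,q_-^{(0)}$ and, by the Hamiltonian identification of the adjoint eigenvectors recalled in Subsection~\ref{ss:choice}, $(L_\ell^{[0]})^*J^{-1}q_+^{(0)}=-\iD\sigma_+(\ell)\,J^{-1}q_+^{(0)}$. \emph{(ii)} By construction of the dual basis, $\langle\tq_+^\eps,q_-^\eps\rangle\equiv0$, i.e.\ $\langle J^{-1}q_+^\eps(\ell,\xi_+(\ell),\cdot),q_-^\eps(\ell,\xi_+(\ell),\cdot)\rangle\equiv0$; differentiating twice at $\eps=0$ gives $\langle J^{-1}q_+^{(0)},q_-^{(2)}\rangle+\langle J^{-1}q_+^{(2)},q_-^{(0)}\rangle=-\langle J^{-1}q_+^{(1)},q_-^{(1)}\rangle$. \emph{(iii)} From the properties of Kato's transformation function \cite[Chapter~Two, \S 4]{Katobook} (exactly as in \cite{NRS-EP}), $q_\pm^{(1)}=\cU^{[1]}_\ell q_\pm^{(0)}$ lies in the range of $I-\Pi^0_{\ell,\xi_+(\ell)}$ and solves $(\iD\sigma_+(\ell)I-L_\ell^{[0]})q_\pm^{(1)}=(I-\Pi^0_{\ell,\xi_+(\ell)})L_\ell^{[1]}q_\pm^{(0)}$. \emph{(iv)} Because the first-order Stokes coefficients $\eta_1,\psi_1,V_1,a_1$ and the symbol $G_{\ell,\xi}^{(1)}$ carry only $\cos(\kappa\,\cdot)$, while the second-order ones carry a constant plus $\cos(2\kappa\,\cdot)$ (Lemmas~\ref{lem-expa}, \ref{lem-expG}), $L_\ell^{[1]}$ shifts the Fourier mode by $\pm1$ and $L_\ell^{[2]}$ by $0$ or $\pm2$; since $q_\pm^{(0)}$ sits at mode $\mp1$, $L_\ell^{[1]}q_-^{(0)}$ is supported at modes $\{0,2\}$, disjoint from the modes $\pm1$ where $\mathrm{ran}\,\Pi^0_{\ell,\xi_+(\ell)}$ lives, so by \emph{(iii)} $q_-^{(1)}$ is supported at modes $\{0,2\}$ with mode-$0$ part $A_0(\ell)\,(L_\ell^{[1]})^1_{-1}V_-(\ell)$ — here $A_0(\ell)$ is the mode-$0$ block of $(\iD\sigma_+(\ell)I-L_\ell^{[0]})^{-1}$, well defined for $\ell\neq0$ by the separation $|\lambda^\pm_{0,\ell,\xi_+(\ell)}-\iD\sigma_+(\ell)|\geq C_0|\ell|>0$ of Proposition~\ref{p:double}.

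\textbf{Assembling.} The computation then collapses mechanically. By \emph{(i)} the two $\eps^2$-terms carrying a $q_\pm^{(2)}$ become $\iD\sigma_+(\ell)\big(\langle J^{-1}q_+^{(0)},q_-^{(2)}\rangle+\langle J^{-1}q_+^{(2)},q_-^{(0)}\rangle\big)$, which by \emph{(ii)} equals $-\iD\sigma_+(\ell)\langle J^{-1}q_+^{(1)},q_-^{(1)}\rangle$; rewriting \emph{(iii)} as $L_\ell^{[0]}q_-^{(1)}=\iD\sigma_+(\ell)q_-^{(1)}-L_\ell^{[1]}q_-^{(0)}$ (using $(I-\Pi^0_{\ell,\xi_+(\ell)})L_\ell^{[1]}q_-^{(0)}=L_\ell^{[1]}q_-^{(0)}$ from \emph{(iv)}) gives $\langle J^{-1}q_+^{(1)},L_\ell^{[0]}q_-^{(1)}\rangle=\iD\sigma_+(\ell)\langle J^{-1}q_+^{(1)},q_-^{(1)}\rangle-\langle J^{-1}q_+^{(1)},L_\ell^{[1]}q_-^{(0)}\rangle$. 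Adding these, all the $q_+^{(1)}$- and $q_\pm^{(2)}$-contributions cancel and the $\eps^2$-coefficient of $a$ reduces to $-\langle J^{-1}q_+^{(0)},L_\ell^{[1]}q_-^{(1)}\rangle-\langle J^{-1}q_+^{(0)},L_\ell^{[2]}q_-^{(0)}\rangle$. Finally, $J^{-1}q_+^{(0)}=\eD^{-\iD\kappa\,\cdot}J^{-1}V_+(\ell)$ is carried by mode $-1$, so by \eqref{def-Mmp} (and \emph{(iv)}, so that only the mode-$0$ part of $q_-^{(1)}$ reaches mode $-1$ under the $\pm1$-shift of $L_\ell^{[1]}$) the first bracket equals $\langle J^{-1}V_+(\ell),(L_\ell^{[1]})^0_{-1}A_0(\ell)(L_\ell^{[1]})^1_{-1}V_-(\ell)\rangle$ and the second equals $\langle J^{-1}V_+(\ell),(L_\ell^{[2]})^1_{-2}V_-(\ell)\rangle$, whence $\Gamma_\ell=-\langle J^{-1}V_+(\ell),\cN_\ell V_-(\ell)\rangle$ with $\cN_\ell$ as stated. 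Running the same bookkeeping at order $\eps^1$ returns $\d_\eps a(\ell,\xi_+(\ell),0)=0$, consistently with the proof of Proposition~\ref{Prop:Gamma}.

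\textbf{Main obstacle.} I expect the only real difficulty to be bookkeeping rather than analysis. The delicate points are: establishing \emph{(ii)}--\emph{(iii)} in exactly the form used, which relies on the precise behaviour of Kato's transformation operator and on the Hamiltonian-dual construction of the previous subsection and can be imported essentially verbatim from \cite{NRS-EP}; and tracking Fourier modes tightly enough that the $q_+^{(1)}$- and $q_\pm^{(2)}$-terms genuinely cancel and only the constant intermediate mode survives in the second-order contribution — which is exactly the dividend of having picked a crossing whose eigenvectors sit at modes $\pm1$ and couple only at order $\eps^2$. One also has to accept that $A_0(\ell)$, and with it the whole reduced picture, blows up as $\ell\to0$, so nothing here is uniform near $\ell=0$; this is harmless for the present statement since a single admissible $\ell$ suffices, the $\ell\to0$ analysis being deferred to the next subsection.
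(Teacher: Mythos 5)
Your proposal is correct: every step checks out (the sign bookkeeping in $a=-\langle J^{-1}q_+^\eps,L^\eps_{\ell,\xi}q_-^\eps\rangle$, the identification of the mode-$0$ part of $q_-^{(1)}$ with $A_0(\ell)(L_\ell^{[1]})^1_{-1}V_-(\ell)$ via the residue computation of $\Pi^{[1]}_\ell$, and the final mode pairing all agree with the paper), and the skeleton — second-order expansion in the Kato basis plus Fourier-mode bookkeeping — is the paper's. The one genuine difference is how the terms carrying $q_\pm^{(2)}$ and $q_+^{(1)}$ are disposed of. The paper computes $\cU^{[2]}_\ell=[\Pi^{[2]}_\ell,\Pi^{[0]}_\ell]+\tfrac12[\Pi^{[1]}_\ell,\Pi^{[0]}_\ell]^2$ explicitly, cancels the $\Pi^{[2]}_\ell$ contributions by pairing the two offending brackets through the Hamiltonian symmetry operator $S$ of Lemma~\ref{l:dual}, and recombines the surviving $A_0(\ell)^2$ terms using $I+(L_\ell^{[0]})_0^0A_0(\ell)=\iD\sigma_+(\ell)A_0(\ell)$. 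You instead never touch $\Pi^{[2]}_\ell$: the $\eps$-independence of the normalization $\langle J^{-1}q_+^\eps,q_-^\eps\rangle=0$, differentiated twice, together with the eigenvalue equation for $q_-^{(0)}$ and its adjoint counterpart for $J^{-1}q_+^{(0)}$, makes the $(2,0,0)$, $(0,0,2)$, $(1,0,1)$ and $(1,1,0)$ contributions cancel in one stroke. Your route is shorter and avoids the $A_0(\ell)^2$ intermediaries entirely; the paper's route keeps everything expressed in the mode-resolved matrices $(\cdot)^m_p$, which is the form it needs anyway for the explicit evaluation in Corollary~\ref{c:formula}, and Lemma~\ref{l:dual} is a reusable structural fact. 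Your caveats (non-uniformity as $\ell\to0$, reliance on Proposition~\ref{p:double} for the invertibility defining $A_0(\ell)$) are exactly the right ones.
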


\begin{proof}
A direct expansion gives
\begin{align*}
\Gamma_{\ell}\,=&-\big\langle J^{-1}(\cU_\ell^{[2]})_{2}^{-1}V_+(\ell), (L_\ell^{[0]})_0^1 V_-(\ell)\big\rangle
-\big\langle J^{-1} (\cU_\ell^{[1]})_{1}^{-1}V_+(\ell),  \big((L_\ell^{[1]})_{-1}^{1}+(L_\ell^{[0]})_0^{0}\, (\cU_\ell^{[1]})_{-1}^{1}\big) V_-(\ell)\big\rangle\\
&-\big\langle J^{-1} V_+(\ell),  \big((L_\ell^{[2]})_{-2}^{1} +(L_\ell^{[1]})_{-1}^{0}(\cU_\ell^{[1]})_{-1}^{1}+(L_\ell^{[0]})_0^{-1} (\cU_\ell^{[2]})_{-2}^{1} \big)V_-(\ell)\big\rangle.
\end{align*}
Now, expanding the Cauchy problem defining $\cU_{\ell,\xi}$ yields
\begin{align*}
\cU_\ell^{[1]}&=[\Pi_\ell^{[1]},\Pi_\ell^{[0]}]\,,&
\cU_\ell^{[2]}&=
\big[\Pi_\ell^{[2]}, \Pi_\ell^{[0]}\big]
+\frac12\,[\Pi_\ell^{[1]}, \Pi_\ell^{[0]}\big]^2\,.
\end{align*}

To compute $\Pi_\ell^{[1]}$, we use the identity
\[
\d_\eps\left(\eps\mapsto\big(\lambda I-L_{\ell,\xi_+(\ell)}^{\eps}\big)^{-1}\right)\big|_{\eps=0}
\,=\,(\lambda\,I-L_\ell^{[0]})^{-1}\,L_\ell^{[1]}\,(\lambda\,I-L_\ell^{[0]})^{-1}
\]
and the Cauchy residue theorem. In this way one obtains
\begin{align*}
(\Pi_\ell^{[1]})_1^{-1}&=A_0(\ell)\,(L_\ell^{[1]})_1^{-1}\,\cM_+(\ell)\,,& 
(\Pi_\ell^{[1]})_{-1}^{1}&=A_0(\ell)\,(L_\ell^{[1]})^1_{-1}\,\cM_-(\ell)\,,\\
(\Pi_\ell^{[1]})^0_{-1}&=\cM_+(\ell)\,(L_\ell^{[1]})^0_{-1}\,A_0(\ell)\,,&
(\Pi_\ell^{[1]})_{1}^{0}&= \cM_-(\ell)\,(L_\ell^{[1]})_1^{0}\,A_0(\ell)\,.
\end{align*}
This also implies that
\begin{align*}
(\cU_\ell^{[1]})_{-1}^1&=A_0(\ell)\,(L_\ell^{[1]})_{-1}^{1}\,\cM_-(\ell)\,,&
(\cU_\ell^{[1]})_{1}^{-1}&=A_0(\ell)\,(L_\ell^{[1]})_{1}^{-1}\,\cM_+(\ell)\,,
\end{align*}
and 
\begin{align*}
\big(\cU_\ell^{[2]}-\big[\Pi_\ell^{[2]},\Pi_\ell^{[0]}\big]\big)_{-2}^1
&=\frac12[\Pi_\ell^{[1]}, \Pi_\ell^{[0]}\big]_{-1}^0\,[\Pi_\ell^{[1]}, \Pi_\ell^{[0]}\big]_{-1}^1
=-\frac12\cM_+(\ell)\,(L_\ell^{[1]})_{-1}^{0}\,A_0(\ell)^2\,(L_\ell^{[1]})_{-1}^{1}\,\cM_-(\ell)\,,\\
\big(\cU_\ell^{[2]}-\big[\Pi_\ell^{[2]},\Pi_\ell^{[0]}\big]\big)_{2}^{-1}
&=\frac12[\Pi_\ell^{[1]}, \Pi_\ell^{[0]}\big]_{1}^0\,[\Pi_\ell^{[1]}, \Pi_\ell^{[0]}\big]_{1}^{-1}
=-\frac12\cM_-(\ell)\,(L_\ell^{[1]})_{1}^{0}\,A_0(\ell)^2\,(L_\ell^{[1]})_{1}^{-1}\,\cM_+(\ell)\,\,.
\end{align*}

Going back to $\Gamma_\ell$, we note that as a result
\[
\big\langle J^{-1} V_+(\ell),\big((L_\ell^{[2]})_{-2}^{1} +(L_\ell^{[1]})_{-1}^{0}(\cU_\ell^{[1]})_{-1}^{1} \big)V_-(\ell)\big\rangle
=\big\langle J^{-1} V_+(\ell),\cN_\ell\,V_-(\ell)\big\rangle\,.
\]
Then Lemma~\ref{l:dual} implies that
\[
\big\langle J^{-1}\big(\big[\Pi_\ell^{[2]},\Pi_\ell^{[0]}\big]\big)_{2}^{-1}V_+(\ell), (L_\ell^{[0]})_0^1 V_-(\ell)\big\rangle
+\big\langle J^{-1} V_+(\ell),(L_\ell^{[0]})_0^{-1} \big(\big[\Pi_\ell^{[2]},\Pi_\ell^{[0]}\big]\big)_{-2}^{1}V_-(\ell)\big\rangle
\,=\,0
\]
so that
\begin{align*}
\big\langle J^{-1}(\cU_\ell^{[2]})_{2}^{-1}V_+(\ell)&, (L_\ell^{[0]})_0^1 V_-(\ell)\big\rangle
+\big\langle J^{-1} V_+(\ell),(L_\ell^{[0]})_0^{-1} (\cU_\ell^{[2]})_{-2}^{1}V_-(\ell)\big\rangle\\
&\,=\,
-\iD\sigma_+(\ell)\,\big\langle J^{-1} V_+(\ell),
\cM_+(\ell) (L_\ell^{[1]})_{-1}^{0}\,A_0(\ell)^2\,(L_\ell^{[1]})_{-1}^{1}\,\cM_-(\ell) V_-(\ell)\big\rangle\,.
\end{align*}
Finally, by using Lemma~\ref{l:dual} again, one derives that
\begin{align*}
\big\langle J^{-1}(\cU_\ell^{[1]})_{1}^{-1}V_+(\ell)&,\big((L_\ell^{[1]})_{-1}^{1}+(L_\ell^{[0]})_0^{0} (\cU_\ell^{[1]})_{-1}^{1}\big) V_-(\ell)\big\rangle\\
&=\big\langle J^{-1}V_+(\ell),
\cM_+(\ell)(L_\ell^{[1]})_{-1}^0 A_0(\ell)\,\big(I+(L_\ell^{[0]})_0^0 A_0(\ell)\big)(L_\ell^{[1]})_{-1}^1 V_-(\ell) \big\rangle\\
&=\iD\sigma_+(\ell)\,\big\langle J^{-1}V_+(\ell),
\cM_+(\ell)(L_\ell^{[1]})_{-1}^0 A_0(\ell)^2(L_\ell^{[1]})_{-1}^1 V_-(\ell) \big\rangle
\end{align*}
since by definition $I+(L_\ell^{[0]})_0^0 A_0(\ell)=\iD\sigma_+(\ell)A_0(\ell)$. Summing the three key identities of the paragraph achieves the proof of the proposition.
\end{proof}

At this stage plugging expansions of Lemmas~\ref{lem-expa} and~\ref{lem-expG} in the formula of Proposition~\ref{p:Nformula} provides a formula for $\Gamma_\ell$ explicit in terms of $(\ell,\xi_+(\ell))$. In order to express the outcome, we find convenient to introduce a few more pieces of notation
\begin{enumerate}
\item For any function $f\in L^2(\R/T\Z)$ we denote $(f_{(m)})_{m\in\Z}$ its sequence of Fourier coefficient, normalized by $f_{(m)}:=\langle \eD^{\iD\kappa\,m\,\cdot}, f\rangle$.
\item For any $j\in \mathbb{Z},$ we denote $\omega_j(\ell):=|j\,\kappa|_{\ell,\xi_+(\ell)}\,=\,\sqrt{(j\kappa+\xi_+(\ell))^2+\ell^2}$.
\item We split $\Gamma_\ell=\Gamma_{{\rm I}}(\ell)+\Gamma_{{\rm II}}(\ell)$ with 
\begin{align*}
\Gamma_{{\rm I}}&:=-\langle J^{-1}V_+, (L^{[2]})^{1}_{-2} V_-\rangle\,,&
\Gamma_{{\rm II}}&:=-\langle J^{-1}V_+,(L^{[1]})_{-1}^0 A_0(L^{[1]})_{-1}^1 V_-\rangle\,.
\end{align*}
\end{enumerate}
We recall that $F$ denotes the extension of $r\mapsto \sqrt{r\,\tah(r)}$ from Lemma~\ref{F-aux} so that $F^2(r)=r\,\tah(r)$.

\bc\label{c:formula}
Both $\Gamma_{{\rm I}}$ and $\Gamma_{{\rm II}}$ (hence also $\ell\mapsto \Gamma_\ell$) may be extended analytically to a neighborhood of zero and
\begin{align*}
\Gamma_{{\rm I}}(\ell)
&=
(V_2)_{(-2)}\frac{F(\kappa)}{\kappa}\Big(
\kappa\big(F(\omega_1(\ell))+F(\omega_{-1}(\ell))\big)
-\xi_{+}(\ell)\big(F(\omega_1(\ell))-F(\omega_{-1}(\ell))\big)
\Big)\\
&\ 
+(a_2)_{(-2)}\frac{\tah(\kappa)}{\kappa}F(\omega_1(\ell))F(\omega_{-1}(\ell))
\ 
+(\eta_2)_{(-2)}\Big(
\kappa^2+F^2(\omega_1(\ell))F^2(\omega_{-1}(\ell))-(\ell^2+(\xi_+(\ell))^2)\Big)\\
&\ 
+(\eta_1^2)_{(-2)}
\Big(
\frac12(\kappa^2+\ell^2+(\xi_+(\ell))^2)
\big(F^2(\omega_1(\ell))+F^2(\omega_{-1}(\ell))\big)\\
&\quad\qquad\qquad
-\kappa\xi_{+}(\ell)\big(F^2(\omega_1(\ell))-F^2(\omega_{-1}(\ell))\big)
-F^2(\omega_0(\ell))F^2(\omega_1(\ell))F^2(\omega_{-1}(\ell))
\Big)
\end{align*}
\begin{align*}
&\Gamma_{{\rm II}}(\ell)
\times\Bigg(4F^2(\omega_0(\ell))-\Big(F(\omega_1(\ell))-F(\omega_{-1}(\ell))\Big)^2\Bigg)\\
&=
-\Big(
\xi_+(\ell)\,\coh(\kappa)F(\kappa)F(\omega_1(\ell))
+\sih(\kappa)\Big(\kappa\xi_+(\ell)+\ell^2+(\xi_+(\ell))^2-F^2(\omega_0(\ell))F^2(\omega_1(\ell))\Big)
\Big)\\
&\qquad
\times
\Big(
\xi_+(\ell)\,\coh(\kappa)F(\kappa)F(\omega_{-1}(\ell))
+\sih(\kappa)\Big(\kappa\xi_+(\ell)-(\ell^2+(\xi_+(\ell))^2)+F^2(\omega_0(\ell))F^2(\omega_{-1}(\ell))\Big)
\Big)\\
&
-\frac{\tah(\kappa)}{\kappa}F^2(\omega_0(\ell))
\Big(
-\Big(\kappa+\xi_+(\ell)\Big)\kappa\coh(\kappa)
+\kappa\sih(\kappa)F(\kappa)F(\omega_1(\ell))
\Big)\\
&\qquad
\times
\Big(
-\Big(\kappa-\xi_+(\ell)\Big)\kappa\coh(\kappa)
+\kappa\sih(\kappa)F(\kappa)F(\omega_{-1}(\ell))
\Big)\\
&
+\frac{F(\kappa)}{2\kappa}\Bigg(F(\omega_1(\ell))-F(\omega_{-1}(\ell))\Bigg)
\Bigg(
\Big(-(\kappa-\xi_+(\ell))\kappa\coh(\kappa)
+\kappa\sih(\kappa)F(\kappa)F(\omega_{-1}(\ell))\Big)\\
&\ \qquad\qquad
\times
\Big(
\xi_+(\ell)\,\coh(\kappa)F(\kappa)F(\omega_1(\ell))
+\sih(\kappa)\left(\kappa\xi_+(\ell)+\ell^2+(\xi_+(\ell))^2-F^2(\omega_0(\ell))F^2(\omega_1(\ell))\right)
\Big)\\
&\ \quad\qquad
+\Big(
-(\kappa+\xi_+(\ell))\kappa\coh(\kappa)
+\kappa\sih(\kappa)F(\kappa)F(\omega_1(\ell))
\Big)\\
&\ \qquad\qquad
\times
\Big(
\xi_+(\ell)\,\coh(\kappa)F(\kappa)F(\omega_{-1}(\ell))
+\sih(\kappa)\left(\kappa\xi_+(\ell)-(\ell^2+(\xi_+(\ell))^2)+F^2(\omega_0(\ell))F^2(\omega_{-1}(\ell))\right)
\Big)
\Bigg)
\end{align*}
with
\begin{align*}
(\eta_2)_{(-2)}&=\kappa\frac{\coh(2\kappa)+2}{8\tah(\kappa)}\,,&
(\eta_1^2)_{(-2)}&=\big((\eta_1)_{(-1)}\big)^2 = \frac{\sih^2(\kappa)}{4}\,,\\
(V_2)_{(-2)}&=\kappa^2\frac{2\sih^4(\kappa)+6\sih^2(\kappa)+3}{8\sih^2(\kappa)}\,,&
(a_2)_{(-2)}&=-\kappa^3\frac{\coh^2(\kappa)+5}{4\tah(\kappa)}\, .
\end{align*}
\ec

\begin{proof}
From Lemmas~\ref{lem-expa} and~\ref{lem-expG} one derives in a straightforward way the expressions for Fourier coefficients and the following relations
\begin{align*}
J^{-1}&=-J=\bp 0&1\\-1&0\ep\,,&
\mu_0&=\frac{\kappa^2}{F^2(\kappa)}\,,&
V_{\pm}(\ell)&=\bp \mp\iD \frac{F(\omega_{\mp1}(\ell))}{\sqrt{\mu_0}}\\1\ep\,,
\end{align*}
\beqs
\begin{aligned}
   & (L^{[1]})_{-1}^1=\frac12\bp
        -\iD\xi_+(\ell) \kappa \coh(\kappa) & (\ell^2-F^2(\omega_0(\ell)) F^2(\omega_1(\ell))+\xi_+(\ell)\,(\kappa+\xi_+(\ell)))\sih (\kappa)  \\[4pt]
     \kappa^2\sih (\kappa )   &  -\iD(\kappa+\xi_+(\ell))\kappa \coh(\kappa)
     \ep \\
     &  (L^{[1]})_{-1}^0=\frac12\bp
        -\iD(\xi_+(\ell)-\kappa) \kappa \coh(\kappa) & (\ell^2-F^2(\omega_0(\ell))\,F^2(\omega_{-1}(\ell))+\xi_+(\ell)(\xi_+(\ell)-\kappa))\sih (\kappa)  \\[4pt]
     \kappa^2\sih (\kappa)    &  -\iD\xi_+(\ell)\kappa \coh(\kappa)
     \ep \\
     &  (L^{[2]})_{-2}^1=\bp
        -\iD(\xi_+(\ell)-\kappa)(V_2)_{-2}  & (G^{(2)})_{(-2)}\\[4pt]
       -(a_2)_{(-2)} & -\iD(\kappa+\xi_+(\ell))(V_2)_{(-2)}
        \ep
\end{aligned}
\eeqs
and
\begin{align*}
A_0(\ell)=\frac{1}{F^2(\omega_0(\ell))-\left(\frac{F(\omega_{-1}(\ell))-F(\omega_1(\ell))}{2}\right)^2}
	\bp
        \iD\,\frac{F(\omega_{-1}(\ell))-F(\omega_1(\ell))}{2\sqrt{\mu_0}}& \frac{F^2(\omega_0(\ell))}{\mu_0}\\
   -1  & \iD\,\frac{F(\omega_{-1}(\ell))-F(\omega_1(\ell))}{2\sqrt{\mu_0}}
     \ep 
\end{align*}
where 
\begin{align*}
     (G^{(2)})_{(-2)}&= \big(-F^2(\omega_1(\ell))\,F^2(\omega_{-1}(\ell))+\ell^2+(\xi_+(\ell))^2-\kappa^2\big)\ (\eta_2)_{(-2)}\\
     &\quad + F^2(\omega_1(\ell))\,F^2(\omega_{-1}(\ell))\,F^2(\omega_0(\ell))\ ((\eta_1)_{(-1)})^2\\
     &\quad -\frac12 \big( (\ell^2 +(-\kappa+\xi_{+}(\ell))^2)F^2(\omega_1(\ell))+(\ell^2 +(\kappa+\xi_{+}(\ell))^2)F^2(\omega_{-1}(\ell)\big)\ (\eta_1^2)_{(-2)}.
\end{align*}
We only point out that when computing $A_0(\ell)$ we use that
\[
\sigma_+(\ell)
\,=\,\xi_+(\ell)+\sqrt{\mu_0}\,\frac{F(\omega_{-1}(\ell))-F(\omega_1(\ell))}{2}\,.
\]

In particular
\begin{align*}
&(L^{[1]})_{-1}^1V_-(\ell)\\
&\quad=\frac12\bp
\xi_+(\ell)\,\coh(\kappa)F(\kappa)F(\omega_1(\ell))
+\sih(\kappa)\left(\kappa\xi_+(\ell)+\ell^2+(\xi_+(\ell))^2-F^2(\omega_0(\ell))F^2(\omega_1(\ell))\right)\\
\iD\left(
-(\kappa+\xi_+(\ell))\kappa\coh(\kappa)
+\kappa\sih(\kappa)F(\kappa)F(\omega_1(\ell))
\right)
\ep\\
&-\left((L^{[1]})_{-1}^0\right)^{\textrm{T}}\,J^{-1}\overline{V_+(\ell)}\\
&\quad=\frac12\bp
\iD\left(
-(\kappa-\xi_+(\ell))\kappa\coh(\kappa)
+\kappa\sih(\kappa)F(\kappa)F(\omega_{-1}(\ell))
\right)\\
\xi_+(\ell)\,\coh(\kappa)F(\kappa)F(\omega_{-1}(\ell))
+\sih(\kappa)\left(\kappa\xi_+(\ell)-(\ell^2+(\xi_+(\ell))^2)+F^2(\omega_0(\ell))F^2(\omega_{-1}(\ell))\right)
\ep\,.
\end{align*}
From this and the above the explicit expressions stem readily.

There only remains to discuss the analyticity of $\Gamma_{{\rm II}}$ that is expressed as a quotient of two analytic functions. To prove its analyticity we only need to prove that the numerator of the quotient vanishes at $0$ at least at the same order than its denominator. On one hand $0$ is visibly at least a double root of the numerator. On the other hand Proposition~\ref{p:double} already contains that the denominator cannot vanish at a higher order as a (disguised) consequence of
\[
|(\lambda^+_{0,\ell,\xi_+(\ell)}-\iD\sigma_+(\ell))\,(\lambda^-_{0,\ell,\xi_+(\ell)}-\iD\sigma_+(\ell))|\geq\,C_0^2\,\ell^2\,.
\]
Hence the result.
\end{proof}

\subsection{Asymptotic expansion of the index}\label{ss:asymp}

In the end Theorem \ref{th:main} is deduced from Proposition~\ref{Prop:Gamma} by proving that $\Gamma_\ell$ is not zero for any sufficiently small nonzero $\ell$. As a preparation we provide some more expansions in the limit $\ell\to0$. 

\bl\label{l:asymp}
There hold
\begin{align*}
\frac{F(\omega_1(\ell))+F(\omega_{-1}(\ell))}{2}&=F(\kappa)\\
\frac{F(\omega_1(\ell))-F(\omega_{-1}(\ell))}{2}&
\stackrel{\ell\to0}{=}
\left(F'(\kappa)+(\kappa\,F''(\kappa)-F'(\kappa))\,\frac12\frac{\ell^2}{\kappa^2}\right)\,\xi_+(\ell)
+\frac16\,F'''(\kappa)\,(\xi_+(\ell))^3\,+\,\cO(\ell^5)
\end{align*}
and
\begin{align*}
\xi_+(\ell)
&\stackrel{\ell\to0}{=}\ell\,\sqrt{-\frac{F'(\kappa)}{\kappa\,F''(\kappa)}}\\
&\,\times\,\left(1
+\frac12\frac{\ell^2}{\kappa^2}\left(\frac34-\frac{F'(\kappa)}{\kappa\,F''(\kappa)}
+\frac{\kappa}{4}\left(\frac{F''(\kappa)}{F'(\kappa)}-2\frac{F'''(\kappa)}{F''(\kappa)}\right)
+\frac{\kappa}{12}\frac{F''''(\kappa)\,F'(\kappa)}{(F''(\kappa))^2}
\right)
\right)+\cO(\ell^5)\,. 
\end{align*}
\el

\begin{proof}
We find convenient to introduce
\begin{align*}
m(\ell)&:=\frac{\omega_1(\ell)+\omega_{-1}(\ell)}{2}\,,&
\delta(\ell)&:=\frac{\omega_1(\ell)-\omega_{-1}(\ell)}{2}\,.&
\end{align*}
We shall prove
\begin{align*}
m(\ell)
&\,\stackrel{\ell\to0}{=}\kappa\,
\left(1+\frac12\,\frac{\ell^2}{\kappa^2}
-\frac18\,\frac{\ell^4}{\kappa^4}
+\frac16\,\frac{\xi_+(\ell)^4}{\kappa^4}\right)
+\cO(\ell^6)\\
\delta(\ell)
&\,\stackrel{\ell\to0}{=}\xi_+(\ell)\,\left(1-\frac12\,\frac{\ell^2}{\kappa^2}\right)+\cO(\ell^5)
\end{align*}

The existence proof for $\xi_+$ already gives 
\[
\xi_+(\ell)\stackrel{\ell\to0}{=}\ell\,\sqrt{-\frac{F'(\kappa)}{\kappa\,F''(\kappa)}}+\cO(\ell^3)\,. 
\]
Then direct expansions from
\begin{align*}
\omega_{\pm1}(\ell)=\kappa\,\sqrt{1\pm2\frac{\xi_+(\ell)}{\kappa}+\left(\frac{\xi_+(\ell)}{\kappa}\right)^2+\left(\frac{\ell}{\kappa}\right)^2}
\end{align*}
yield
\begin{align*}
m(\ell)&=\frac{\omega_1(\ell)+\omega_{-1}(\ell)}{2}\\
&\,\stackrel{\ell\to0}{=}\kappa\,
\Bigg(1+\frac12\,\left(\left(\frac{\xi_+(\ell)}{\kappa}\right)^2+\left(\frac{\ell}{\kappa}\right)^2\right)
-\frac{1}{8}\left(
\left(2\frac{\xi_+(\ell)}{\kappa}\right)^2
+\left(\left(\frac{\xi_+(\ell)}{\kappa}\right)^2+\left(\frac{\ell}{\kappa}\right)^2\right)^2\right)\\
&\qquad\qquad\qquad
+\frac{3}{16}\,\left(2\frac{\xi_+(\ell)}{\kappa}\right)^2
\left(\left(\frac{\xi_+(\ell)}{\kappa}\right)^2+\left(\frac{\ell}{\kappa}\right)^2\right)
-\frac{15}{16\times24}\left(2\frac{\xi_+(\ell)}{\kappa}\right)^4\Bigg)+\cO(\ell^6)\\
&\,\stackrel{\ell\to0}{=}\kappa\,
\left(1+\frac12\,\frac{\ell^2}{\kappa^2}
-\frac18\,\frac{\ell^4}{\kappa^4}
+\frac12\,\frac{\ell^2}{\kappa^2}\,\frac{\xi^2}{\kappa^2}\right)
+\cO(\ell^6)\,.
\end{align*}
This implies
\begin{align*}
\delta(\ell)=\frac{\omega_1(\ell)-\omega_{-1}(\ell)}{2}
=\frac{\kappa\,\xi_+(\ell)}{(\omega_1(\ell)+\omega_{-1}(\ell))/2}
\,\stackrel{\ell\to0}{=}\xi_+(\ell)\,\left(1-\frac12\,\frac{\ell^2}{\kappa^2}\right)+\cO(\ell^5)\,.
\end{align*}
Now, by definition of $\xi_+(\ell)$, $\delta(\ell)$ has the same sign as $\ell$ and
\[
\frac{F(m(\ell)+\delta(\ell))+F(m(\ell)-\delta(\ell))}{2}\,=\,F(\kappa)
\]
so that a Taylor expansion yields 
\begin{align*}
\delta(\ell)
&\,\stackrel{\ell\to0}{=}\,\sign(\ell)\,\sqrt{-\frac{F(m(\ell))-F(\kappa)}{F''(m(\ell))/2}
-\frac{F''''(m(\ell))\,\delta(\ell)^4}{12\,F''(m(\ell))}+\cO(\ell^6)}\\
&\,\stackrel{\ell\to0}{=}\,\sign(\ell)\,\sqrt{-\frac{F(m(\ell))-F(\kappa)}{F''(m(\ell))/2}
-\frac{F''''(\kappa)\,(F'(\kappa))^2}{12\,(F''(\kappa))^3\kappa^2}\ell^4+\cO(\ell^6)}\,,
\end{align*}
whereas 
\begin{align*}
&\frac{F(m(\ell))-F(\kappa)}{F''(m(\ell))}
\stackrel{\ell\to0}{=}
\frac{F'(\kappa)}{F''(\kappa)}\,(m(\ell)-\kappa)
+\frac12\,(m(\ell)-\kappa)^2\,\left(1-2\frac{F'(\kappa)\,F'''(\kappa)}{(F''(\kappa))^2}\right)
+\cO(\ell^6)\\
&\qquad\quad\stackrel{\ell\to0}{=}
\frac12\frac{F'(\kappa)}{\kappa\,F''(\kappa)}\,\ell^2\left(
\,1
+\frac{\ell^2}{\kappa^2}\left(-\frac14-\frac{F'(\kappa)}{\kappa\,F''(\kappa)}
+\frac{\kappa}{4}\left(\frac{F''(\kappa)}{F'(\kappa)}-2\frac{F'''(\kappa)}{F''(\kappa)}\right)
\right)
+\cO(\ell^4)\right)\,.
\end{align*}
From this stems
\begin{align*}
\delta(\ell)
&\stackrel{\ell\to0}{=}\ell\,\sqrt{-\frac{F'(\kappa)}{\kappa\,F''(\kappa)}}\\
&\quad\times\,\left(1
+\frac12\frac{\ell^2}{\kappa^2}\left(-\frac14-\frac{F'(\kappa)}{\kappa\,F''(\kappa)}
+\frac{\kappa}{4}\left(\frac{F''(\kappa)}{F'(\kappa)}-2\frac{F'''(\kappa)}{F''(\kappa)}\right)
+\frac{\kappa}{12}\frac{F''''(\kappa)\,F'(\kappa)}{(F''(\kappa))^2}
\right)
\right)+\cO(\ell^5)\,. 
\end{align*}
The proof is then concluded by collecting the above pieces and expanding 
\[
\frac{F(\omega_1(\ell))-F(\omega_{-1}(\ell))}{2}
\stackrel{\ell\to0}{=}
F'(m(\ell))\,\delta(\ell)
+\frac16 F'''(m(\ell))\,(\delta(\ell))^3
\,+\,\cO(\ell^5)\,.
\]
\end{proof}

\begin{proposition}\label{p:asymp}
There hold
\begin{align*}
\Gamma_{{\rm I}}(0)
=&\,
2\kappa \tah(\kappa)\ (V_2)_{(-2)}
+\tah^2(\kappa)\ (a_2)_{(-2)}
+\kappa^2(1+\tah^2(\kappa))\ (\eta_2)_{(-2)}
+\kappa^3\tah(\kappa)\ (\eta_1^2)_{(-2)}\\
\Gamma_{{\rm I}}'(0)
=&\,0\\
\frac12\Gamma_{{\rm I}}''(0)
=&\,
(V_2)_{(-2)}2\frac{F(\kappa)(F'(\kappa))^2}{\kappa^2F''(\kappa)}
+(a_2)_{(-2)}\tah(\kappa)\frac{(F'(\kappa))^3}{\kappa^2F''(\kappa)}\\
&\  
-(\eta_2)_{(-2)}\left(1-\frac{F'(\kappa)}{\kappa F''(\kappa)}
-2\tah(\kappa)\frac{(F'(\kappa))^3}{F''(\kappa)}\right)\\
&\ 
+(\eta_1^2)_{(-2)}
\left(
\kappa\tah(\kappa)(1-\kappa\tah(\kappa))\left(1-\frac{F'(\kappa)}{\kappa F''(\kappa)}\right)
-\frac{\kappa(F'(\kappa))^3}{F''(\kappa)}
+4\frac{F(\kappa)(F'(\kappa))^2}{F''(\kappa)}
\right)
\end{align*}
with Fourier coefficients given in Corollary~\ref{c:formula}, and
\begin{align*}
\Gamma_{{\rm II}}(0)
&=\frac{\kappa^2}{4}\frac{
4\sih^2(\kappa) F'(\kappa)
+4\tah(\kappa)F(\kappa)(F'(\kappa))^2
-\frac{\tah(\kappa)}{\coh^2(\kappa)}\kappa(\kappa F''(\kappa)-F'(\kappa))
}{\kappa F''(\kappa)-F'(\kappa)
+\,(F'(\kappa))^3}\\
\Gamma_{{\rm II}}'(0)
&=0
\end{align*}
\begin{align*}
\hspace{-2em}
\frac12\Gamma_{{\rm II}}''&(0)
\times 4\,\left(\kappa F''(\kappa)-F'(\kappa)
+\,(F'(\kappa))^3\right)\\
&=\frac{4\Gamma_{{\rm II}}(0)}{\kappa F''(\kappa)}
\Bigg(
\frac13(\kappa F''(\kappa)-F'(\kappa))^2
+\,(F'(\kappa))^2
\left(-\frac{F''(\kappa)}{\kappa}(\kappa\,F''(\kappa)-F'(\kappa))\,
+\frac13\,F'''(\kappa)\,F'(\kappa)
\right)
\Bigg)\\
&\ 
+\Gamma_{{\rm II}}(0)\,(1-(F'(\kappa))^2)\frac{F'(\kappa)}{\kappa^2}
\left(3-4\frac{F'(\kappa)}{\kappa\,F''(\kappa)}
+\kappa\left(\frac{F''(\kappa)}{F'(\kappa)}-2\frac{F'''(\kappa)}{F''(\kappa)}\right)
+\frac{\kappa\,F''''(\kappa)\,F'(\kappa)}{3\,(F''(\kappa))^2}
\right)\\
&\ 
+\frac{F'(\kappa)}{4}
\left(
4\sih^2(\kappa)
+\kappa\frac{\tah(\kappa)}{\coh^2(\kappa)}
+4\tah(\kappa)F(\kappa)F'(\kappa)
\right)\\
&\qquad\quad\quad\times
\left(3-4\frac{F'(\kappa)}{\kappa\,F''(\kappa)}
+\kappa\left(\frac{F''(\kappa)}{F'(\kappa)}-2\frac{F'''(\kappa)}{F''(\kappa)}\right)
+\frac{\kappa\,F''''(\kappa)\,F'(\kappa)}{3\,(F''(\kappa))^2}
\right)\\
&\ 
+\frac{1}{\kappa F''(\kappa)}\Bigg(
\left(\frac13\kappa^3\frac{\tah(\kappa)}{\coh^2(\kappa)}
+\sih^2(\kappa)(1-\kappa\tah(\kappa))^2
\right)\,(\kappa F''(\kappa)-F'(\kappa))^2\\
&\qquad\quad\quad
+2\kappa^2\tah(\kappa)F(\kappa)\,F'(\kappa)\,
\left((\kappa\,F''(\kappa)-F'(\kappa))\,\frac{F''(\kappa)}{\kappa}
-\frac13\,F'''(\kappa)\,F'(\kappa)
\right)
\\
&\qquad\quad\quad
-F'(\kappa)(\kappa\,F''(\kappa)-F'(\kappa))\kappa\coh(\kappa)\sih(\kappa)\\
&\qquad\quad\quad 
-(\kappa\,F''(\kappa)-F'(\kappa)) F(\kappa)(F'(\kappa))^2
\,2\sih(\kappa)(\kappa\sih(\kappa)+2\coh(\kappa))\\
&\qquad\quad\quad
-(\kappa\,F''(\kappa)-F'(\kappa))(F'(\kappa))^3
\kappa\tah(\kappa)(\kappa\tah(\kappa)(3\sih^2(\kappa)+4)-2\sih^2(\kappa))\\
&\qquad\quad\quad
+(F'(\kappa))^4\,3\kappa\sih(\kappa)\coh(\kappa)
-(F'(\kappa))^5\,2\kappa\sih^2(\kappa)F(\kappa)\Bigg)\,.
\end{align*}
\end{proposition}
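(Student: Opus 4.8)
The plan is to derive Proposition~\ref{p:asymp} by Taylor-expanding around $\ell=0$ the closed expressions for $\Gamma_{\rm I}(\ell)$ and $\Gamma_{\rm II}(\ell)$ provided by Corollary~\ref{c:formula}, after substituting the expansions of Lemma~\ref{l:asymp}. As a first step, which already settles the two first-derivative statements, I would observe that under $\ell\mapsto-\ell$ one has $\xi_+(\ell)\mapsto-\xi_+(\ell)$ by oddness (Proposition~\ref{p:double}), $\omega_1(\ell)\leftrightarrow\omega_{-1}(\ell)$, while $\omega_0(\ell)$ and $\ell^2$ are unchanged. A term-by-term inspection of the right-hand sides in Corollary~\ref{c:formula} shows that each is invariant under the simultaneous replacement $\omega_1\leftrightarrow\omega_{-1}$, $\xi_+\mapsto-\xi_+$, because odd factors always occur in matched pairs. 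Hence $\Gamma_{\rm I}$ is even; and $\Gamma_{\rm II}$ is even as well, being the quotient of the (even) right-hand side of its defining identity by the (even) factor $4F^2(\omega_0(\ell))-\bigl(F(\omega_1(\ell))-F(\omega_{-1}(\ell))\bigr)^2$, both analytic near $0$ and the quotient having a removable singularity at $0$ by Corollary~\ref{c:formula}. Evenness gives $\Gamma_{\rm I}'(0)=\Gamma_{\rm II}'(0)=0$, and it remains to read off the constant and the $\ell^2$-coefficients.

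For $\Gamma_{\rm I}$ the computation is short. I would re-express the $\omega_{\pm1}$-dependent quantities through $m(\ell):=\tfrac12(\omega_1+\omega_{-1})$ and $\delta(\ell):=\tfrac12(\omega_1-\omega_{-1})$, expanding $F(\omega_{\pm1})=F(m)\pm F'(m)\,\delta+\tfrac12F''(m)\,\delta^2+\cO(\delta^3)$, and use $F^2(r)=r\,\tah(r)$, $F(0)=0$, the exact identity $F(\omega_1)+F(\omega_{-1})=2F(\kappa)$ from Lemma~\ref{l:asymp}, and $\omega_0(\ell)^2=\xi_+(\ell)^2+\ell^2$. Evaluating at $\ell=0$ (so $\xi_+=0$, $\omega_{\pm1}=\kappa$, $\omega_0=0$) yields $\Gamma_{\rm I}(0)$ directly. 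For $\tfrac12\Gamma_{\rm I}''(0)$ I would insert the order-$\ell^2$ expansions of $\xi_+$, $m$, $\delta$ from Lemma~\ref{l:asymp} together with $F^2(\omega_0(\ell))=\omega_0\tah(\omega_0)=\ell^2\bigl(1-\tfrac{F'(\kappa)}{\kappa F''(\kappa)}\bigr)+\cO(\ell^4)$ and collect the $\ell^2$-coefficient; the numerical Fourier coefficients $(\eta_2)_{(-2)},(\eta_1^2)_{(-2)},(V_2)_{(-2)},(a_2)_{(-2)}$ are those recorded in Corollary~\ref{c:formula}.

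For $\Gamma_{\rm II}$ the additional subtlety is that both sides of the identity of Corollary~\ref{c:formula} vanish to second order in $\ell$. Using $F(\omega_1)-F(\omega_{-1})=2F'(\kappa)\,\xi_+(\ell)+\cO(\ell^3)$ and $\xi_+(\ell)^2=-\tfrac{F'(\kappa)}{\kappa F''(\kappa)}\,\ell^2+\cO(\ell^4)$, the left factor expands as $\tfrac{4\ell^2}{\kappa F''(\kappa)}\bigl(\kappa F''(\kappa)-F'(\kappa)+(F'(\kappa))^3\bigr)+\cO(\ell^4)$, which explains the normalization of $\tfrac12\Gamma_{\rm II}''(0)$ adopted in the statement; positivity of this leading coefficient is in fact already contained in the estimate $|(\lambda^+_{0,\ell,\xi_+(\ell)}-\iD\sigma_+(\ell))(\lambda^-_{0,\ell,\xi_+(\ell)}-\iD\sigma_+(\ell))|\geq C_0^2\ell^2$ of Proposition~\ref{p:double}. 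Writing the right-hand side of the identity as $R_2\ell^2+R_4\ell^4+\cO(\ell^6)$ and the left factor as $D_2\ell^2+D_4\ell^4+\cO(\ell^6)$, one has $\Gamma_{\rm II}(0)=R_2/D_2$ and $\tfrac12\Gamma_{\rm II}''(0)=(R_4-\Gamma_{\rm II}(0)\,D_4)/D_2$, so what is needed is the order-$\ell^4$ expansion of the (large) right-hand side and of the left factor. Each of its three groups of products is expanded exactly as for $\Gamma_{\rm I}$ — through $m,\delta,\omega_0$, with $F^2(r)=r\,\tah(r)$, $\tah(\omega_0)=\omega_0-\tfrac13\omega_0^3+\cO(\omega_0^5)$, and with $\xi_+,m,\delta$ replaced by their Lemma~\ref{l:asymp} expansions, which are stated to exactly the order required here.

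The only genuine difficulty is the sheer size of this last expansion: the right-hand side of the $\Gamma_{\rm II}$ identity in Corollary~\ref{c:formula} is a sum of products of four or five factors, each of which must be carried consistently to order $\ell^4$, and the resulting mass of terms collapses to the displayed form only after systematic use of $F^2(\kappa)=\kappa\,\tah(\kappa)$, $F(\kappa)F'(\kappa)=\tfrac12\bigl(\tah(\kappa)+\kappa/\coh^2(\kappa)\bigr)$ and the interrelations among $m,\delta,\xi_+$ from Lemma~\ref{l:asymp}. I would organize it by recording once and for all the order-$\ell^4$ expansions of the recurring building blocks $F(\omega_{\pm1})$, $F^2(\omega_{\pm1})$, $F^2(\omega_0)$, $F^2(\omega_0)F^2(\omega_{\pm1})$ and of $\xi_+^2,\xi_+^3$ in terms of $F$ and its derivatives at $\kappa$ and of $\ell^2$, then substituting group by group and summing, and finally using $\Gamma_{\rm II}(0)=R_2/D_2$ to reabsorb some $F$-derivative combinations into the form printed in the statement. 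The bookkeeping is heavy, but — in contrast with the indices of \cite{HY2023,CNS24} — every intermediate quantity stays small enough to be produced and checked by hand, and no idea beyond Corollary~\ref{c:formula} and Lemma~\ref{l:asymp} is required.
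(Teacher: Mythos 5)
Your proposal is correct and follows essentially the same route as the paper: substitute the expansions of Lemma~\ref{l:asymp} into the closed formulas of Corollary~\ref{c:formula}, expand the $\Gamma_{\rm II}$ identity's right-hand side and the prefactor $4F^2(\omega_0)-\bigl(F(\omega_1)-F(\omega_{-1})\bigr)^2$ to order $\ell^4$ (both vanishing to order $\ell^2$, with leading coefficient $\tfrac{4}{\kappa F''(\kappa)}\bigl(\kappa F''(\kappa)-F'(\kappa)+(F'(\kappa))^3\bigr)$ as you compute), and collect terms. Your evenness argument for $\Gamma_{\rm I}'(0)=\Gamma_{\rm II}'(0)=0$ is a slightly cleaner justification than the paper's (which simply reads the vanishing off expansions containing only even powers), but it is not a different method.
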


We stress that much less computations would be required if examining the value $\Gamma_0$ (of $\Gamma$ at $\ell=0$) were sufficient. Unfortunately there is a value of $\kappa$ at which this value vanishes and a higher-order computation is needed.

\begin{proof}
In order to use the foregoing lemma we observe that
\begin{align*}
F^2(\omega_1(\ell))-F^2(\omega_{-1}(\ell))
&=
4\frac{F(\omega_1(\ell))+F(\omega_{-1}(\ell))}{2}\,\frac{F(\omega_1(\ell))-F(\omega_{-1}(\ell))}{2}\\
F(\omega_1(\ell))F(\omega_{-1}(\ell))
&=
\left(\frac{F(\omega_1(\ell))+F(\omega_{-1}(\ell))}{2}\right)^2
-\,\left(\frac{F(\omega_1(\ell))-F(\omega_{-1}(\ell))}{2}\right)^2\\
F^2(\omega_1(\ell))+F^2(\omega_{-1}(\ell))
&=
2\,\left(\frac{F(\omega_1(\ell))+F(\omega_{-1}(\ell))}{2}\right)^2
+2\,\left(\frac{F(\omega_1(\ell))-F(\omega_{-1}(\ell))}{2}\right)^2\,.
\end{align*}

Then we derive 
\begin{align*}
\Gamma_{{\rm I}}(\ell)
\stackrel{\ell\to0}{=}&
(V_2)_{(-2)}\frac{F(\kappa)}{\kappa}\Big(
2\kappa F(\kappa)
-2(\xi_{+}(\ell))^2\,F'(\kappa)\,
\Big)\\
&\ 
+(a_2)_{(-2)}\frac{\tah(\kappa)}{\kappa}\Big(\kappa\tah(\kappa)-(F'(\kappa))^2(\xi_{+}(\ell))^2\Big)\\
&\  
+(\eta_2)_{(-2)}\Big(
\kappa^2(1+\tah^2(\kappa))-2\kappa\tah(\kappa)(F'(\kappa))^2(\xi_{+}(\ell))^2-(\ell^2+(\xi_+(\ell))^2)\Big)\\
&\ 
+(\eta_1^2)_{(-2)}
\Big(
\kappa\tah(\kappa)(\kappa^2+\ell^2+(\xi_+(\ell))^2)
+\kappa^2(F'(\kappa))^2(\xi_{+}(\ell))^2\\
&\quad\qquad\qquad
-4\kappa F(\kappa)F'(\kappa)\,(\xi_{+}(\ell))^2
-\kappa^2\tah^2(\kappa)\,(\ell^2+(\xi_+(\ell))^2)
\Big)+\cO(\ell^3)
\end{align*}
\begin{align*}
4F^2(\omega_0(\ell))&-\Big(F(\omega_1(\ell))-F(\omega_{-1}(\ell))\Big)^2\\
\stackrel{\ell\to0}{=}&
4\,(\ell^2+(\xi_+(\ell))^2)-4\,(F'(\kappa))^2\,(\xi_{+}(\ell))^2
-\frac43\,(\ell^2+(\xi_+(\ell))^2)^2\\
&
-4\,F'(\kappa)\,(\xi_+(\ell))^2
\left((\kappa\,F''(\kappa)-F'(\kappa))\,\frac{\ell^2}{\kappa^2}
+\frac13\,F'''(\kappa)\,(\xi_+(\ell))^2
\right)+\cO(\ell^5)
\end{align*}
and
\begin{align*}
&\Gamma_{{\rm II}}(\ell)
\times\Bigg(4F^2(\omega_0(\ell))-\Big(F(\omega_1(\ell))-F(\omega_{-1}(\ell))\Big)^2\Bigg)\\
&\stackrel{\ell\to0}{=}
-\kappa^3\frac{\tah(\kappa)}{\coh^2(\kappa)}F^2(\omega_0(\ell))
+(\xi_+(\ell))^2
\left(-4\kappa^2\sih^2(\kappa)\right)
-4\kappa^2\tah(\kappa)F(\kappa)\frac{F(\omega_1(\ell))-F(\omega_{-1}(\ell))}{2}\xi_+(\ell)\\
&+\left(\ell^2+(\xi_+(\ell))^2-F^2(\omega_0(\ell))\kappa\tah(\kappa)
\right)^2\sih^2(\kappa)
+(\xi_+(\ell))^2
F^2(\omega_0(\ell))\kappa\coh(\kappa)\sih(\kappa)\\
&+\frac{F(\omega_1(\ell))-F(\omega_{-1}(\ell))}{2}\xi_+(\ell)
\left(F^2(\omega_0(\ell))
2\kappa\sih^2(\kappa)F(\kappa)
+4\coh(\kappa)\sih(\kappa)F(\kappa)\left(\ell^2+(\xi_+(\ell))^2\right)
\right)\\
&+\left(\frac{F(\omega_1(\ell))-F(\omega_{-1}(\ell))}{2}\right)^2
\left(F^2(\omega_0(\ell))\kappa^2\tah^2(\kappa)(3\sih^2(\kappa)+4)
-2\kappa\sih^2(\kappa)\tah(\kappa)\left(\ell^2+(\xi_+(\ell))^2\right)\right)\\
&+\left(\frac{F(\omega_1(\ell))-F(\omega_{-1}(\ell))}{2}\right)^2(\xi_+(\ell))^2
\,3\kappa\sih(\kappa)\coh(\kappa)\\
&-\left(\frac{F(\omega_1(\ell))-F(\omega_{-1}(\ell))}{2}\right)^3\xi_+(\ell)
\,2\kappa\sih^2(\kappa)F(\kappa)
+\cO(\ell^6)
\end{align*}
so that
\begin{align*}
\hspace{-2em}
\Gamma_{{\rm II}}&(\ell)
\times\Bigg(4F^2(\omega_0(\ell))-\Big(F(\omega_1(\ell))-F(\omega_{-1}(\ell))\Big)^2\Bigg)\\
&\stackrel{\ell\to0}{=}
-4\kappa^2\sih^2(\kappa)\,(\xi_+(\ell))^2
-\kappa^3\frac{\tah(\kappa)}{\coh^2(\kappa)}\,(\ell^2+(\xi_+(\ell))^2)
-4\kappa^2\tah(\kappa)F(\kappa)F'(\kappa)\,(\xi_+(\ell))^2\\
&\ 
+\frac13\kappa^3\frac{\tah(\kappa)}{\coh^2(\kappa)}\,(\ell^2+(\xi_+(\ell))^2)^2
-2\kappa^2\tah(\kappa)F(\kappa)\,
(\xi_+(\ell))^2\,
\left((\kappa\,F''(\kappa)-F'(\kappa))\,\frac{\ell^2}{\kappa^2}
+\frac13\,F'''(\kappa)\,(\xi_+(\ell))^2
\right)
\\
&\ 
+\sih^2(\kappa)(1-\kappa\tah(\kappa))^2\,\,(\ell^2+(\xi_+(\ell))^2)^2
+(\xi_+(\ell))^2(\ell^2+(\xi_+(\ell))^2)\kappa\coh(\kappa)\sih(\kappa)\\
&\ 
+(\xi_+(\ell))^2(\ell^2+(\xi_+(\ell))^2) F(\kappa)F'(\kappa)
\,2\sih(\kappa)(\kappa\sih(\kappa)+2\coh(\kappa))\\
&\ 
+(\xi_+(\ell))^2(\ell^2+(\xi_+(\ell))^2)(F'(\kappa))^2
\kappa\tah(\kappa)(\kappa\tah(\kappa)(3\sih^2(\kappa)+4)-2\sih^2(\kappa))\\
&\ 
+(\xi_+(\ell))^4 (F'(\kappa))^2\,3\kappa\sih(\kappa)\coh(\kappa)
-(\xi_+(\ell))^4 (F'(\kappa))^3\,2\kappa\sih^2(\kappa)F(\kappa)
\ +\ \cO(\ell^5)\,.
\end{align*}

The final formulas are then obtained by expanding $\xi_+(\ell)$ and collecting terms.
\end{proof}

Parts of the verification of the non vanishing of the index is completed with validated numerics resulting in a computer-assisted proof. To prepare the latter we provide alternative formulas for Proposition~\ref{p:asymp}, that are cumbersome but slightly more explicit.

\bc\label{c:more}
There hold
\begin{align*}
\Gamma_{{\rm I}}(0)
&=\frac{\kappa^3}
{8\coh(\kappa)\sih(\kappa)}
\Big(8\sih^4(\kappa)+8\sih^2(\kappa)+9\Big)\\
\Gamma_{{\rm II}}(0)
&=-\frac{\kappa^3}{2}\tah(\kappa)
%\frac{8\kappa^4\coh^4(\kappa)\sih^2(\kappa)}{16\kappa\coh^5(\kappa)\sih(\kappa)}
\times\Big(\kappa^3(1+4\sih^2(\kappa))
+4\kappa^2\coh(\kappa)\sih(\kappa)
+\kappa\sih^2(\kappa)(1+\sih^2(\kappa))\left(8\sih^2(\kappa)+19\right)\\
&\qquad\qquad\qquad\qquad
+4\coh(\kappa)\sih^3(\kappa)(1+\sih^2(\kappa))(3+2\sih^2(\kappa))
\Big)\\
&\qquad\times\Big(
\kappa^3(8\sih^4(\kappa)+10\sih^2(\kappa)+1)
-3\kappa^2\coh(\kappa)\sih(\kappa)\\
&\qquad\qquad+3\kappa\sih^2(\kappa)(1+\sih^2(\kappa))(2\sih^2(\kappa)+1)
-\coh(\kappa)\sih^3(\kappa)(1+\sih^2(\kappa))
\Big)^{-1}\,.
\end{align*}
%\begin{align*}
%\frac12\Gamma_{{\rm I}}''(0)
%&=-\frac{\kappa}{8\coh^2(\kappa)\sih(\kappa)}\times\Bigg(\kappa^2(1+4\sih^2(\kappa))-2\kappa\coh(\kappa)\sih(\kappa)+\sih^2(\kappa)(1+\sih^2(\kappa))
%\Bigg)^{-1}\\
%&\times\Bigg(
%\kappa^3\sih(\kappa)(8\sih^6(\kappa)+2\sih^4(\kappa)-3)
%+2\kappa^2\coh(\kappa)\left(
%16\sih^4(\kappa)
%+7\sih^2(\kappa)+3
%\right)\\
%&+3\kappa\sih(\kappa)(1+\sih^2(\kappa))\left(
%2\sih^6(\kappa)+8\sih^4(\kappa)+\sih^2(\kappa)+2
%\right)\\
%&
%+6\coh(\kappa)\sih^2(\kappa)(1+\sih^2(\kappa))
%\left(
%2\sih^4(\kappa)+3\sih^2(\kappa)+2\right)
%\Bigg)
%\end{align*}
Therefore
\begin{align*}
\lim_{\ell\to0}\Gamma_\ell
&=\frac{\kappa^3}
{8\coh(\kappa)\sih(\kappa)}
\times\Big(
\kappa^3(8\sih^4(\kappa)+10\sih^2(\kappa)+1)
-3\kappa^2\coh(\kappa)\sih(\kappa)\\
&\qquad\qquad\qquad
+3\kappa\sih^2(\kappa)(1+\sih^2(\kappa))(2\sih^2(\kappa)+1)
-\coh(\kappa)\sih^3(\kappa)(1+\sih^2(\kappa))
\Big)^{-1}\\
&\qquad\times\Big(
\kappa^3\left(64\sih^8(\kappa)+144\sih^6(\kappa)+144\sih^4(\kappa)+94\sih^2(\kappa)+9\right)
%\Bigg(8\sih^4(\kappa)+8\sih^2(\kappa)+9\Bigg)(8\sih^4(\kappa)+10\sih^2(\kappa)+1)
%-4\sih^2(\kappa)(1+4\sih^2(\kappa))
\\
&\qquad\qquad
-\kappa^2\coh(\kappa)\sih(\kappa)\left(24\sih^4(\kappa)+40\sih^2(\kappa)+27\right)
%\Bigg(8\sih^4(\kappa)+8\sih^2(\kappa)+9\Bigg)(-3\coh(\kappa)\sih(\kappa))
%-4\sih^2(\kappa)4\coh(\kappa)\sih(\kappa)
\\
&\qquad\qquad
+\kappa\sih^2(\kappa)(1+\sih^2(\kappa))\left(48\sih^6(\kappa)+40\sih^4(\kappa)+2\sih^2(\kappa)+27\right)
%\Bigg(8\sih^4(\kappa)+8\sih^2(\kappa)+9\Bigg)3\sih^2(\kappa)(2\sih^4(\kappa)+3\sih^2(\kappa)+1)
%&=\Bigg(8\sih^4(\kappa)+8\sih^2(\kappa)+9\Bigg)(6\sih^2(\kappa)+3)\sih^2(\kappa)(\sih^2(\kappa)+1)
%-4\sih^2(\kappa)\sih^2(\kappa)(1+\sih^2(\kappa))\left(8\sih^2(\kappa)+19\right)
\\
&\qquad\qquad
-\coh(\kappa)\sih^3(\kappa)(1+\sih^2(\kappa)))
\left(40\sih^4(\kappa)+56\sih^2(\kappa)+9\right)
%\Bigg(8\sih^4(\kappa)+8\sih^2(\kappa)+9\Bigg)(-\coh(\kappa)\sih^3(\kappa)(1+\sih^2(\kappa)))
%-4\sih^2(\kappa)4\coh(\kappa)\sih^3(\kappa)(1+\sih^2(\kappa))(3+2\sih^2(\kappa))
\Big)\,.
\end{align*}
\ec

Similar formulas for $\Gamma_{{\rm I}}''(0)$ and $\Gamma_{{\rm II}}''(0)$ are given in Appendix~\ref{additional_formula}.

\begin{proof}
%\begin{align*}
%\Gamma_{{\rm I}}(0)
%&=2\kappa \tah(\kappa)\left(
%\kappa^2\frac{2\sih^4(\kappa)+6\sih^2(\kappa)+3}{8\sih^2(\kappa)}
%\right)
%-\tah^2(\kappa)\kappa^3\frac{\coh^2(\kappa)+5}{4\tah(\kappa)}\\
%&\qquad
%+\kappa^2(1+\tah^2(\kappa))\kappa\frac{\coh(2\kappa)+2}{8\tah(\kappa)}
%+\kappa^3\tah(\kappa)\frac{\sih^2(\kappa)}{4}\\
%&=\frac{\kappa^3}
%{8\coh(\kappa)\sih(\kappa)}
%\Bigg(4\sih^4(\kappa)+12\sih^2(\kappa)+6
%-2\sih^2(\kappa)\coh^2(\kappa)-10\sih^2(\kappa)\\
%&\qquad
%+(\coh^2(\kappa)+\sih^2(\kappa))(\coh(2\kappa)+2)+2\sih^4(\kappa)\Bigg)\\
%&=\frac{\kappa^3}
%{8\coh(\kappa)\sih(\kappa)}
%\Bigg(4\sih^4(\kappa)+6+(1+2\sih^2(\kappa))(\coh(2\kappa)+2)\Bigg)\\
%&=\frac{\kappa^3}
%{8\coh(\kappa)\sih(\kappa)}
%\Bigg(4\sih^4(\kappa)+6+(1+2\sih^2(\kappa))(2\sih^2(\kappa)+3)\Bigg)\\
%&=\frac{\kappa^3}
%{8\coh(\kappa)\sih(\kappa)}
%\Bigg(8\sih^4(\kappa)+8\sih^2(\kappa)+9\Bigg)
%\end{align*}
To make explicit the $\Gamma_{{\rm I}}(0)$, one simply needs to replace Fourier coefficients with their explicit formulas and expand.

Concerning the $\Gamma_{{\rm II}}(0)$ part, we first need to give explicit formulas for derivatives of $F$. Note that
\begin{align*}
\frac{F'(\kappa)}{F(\kappa)}
&=\frac12\frac{1}{\kappa}+\frac12\left(\frac{1}{\tah(\kappa)}-\tah(\kappa)\right)
=\frac12\frac{1}{\kappa}+\frac12\frac{1}{\coh(\kappa)\sih(\kappa)}
=\frac{\kappa+\coh(\kappa)\sih(\kappa)}{2\kappa\coh(\kappa)\sih(\kappa)}\\
\frac{F''(\kappa)}{F(\kappa)}
&=\left(\frac{F'}{F}\right)'(\kappa)
+\left(\frac{F'(\kappa)}{F(\kappa)}\right)^2
=-\frac12\frac{1}{\kappa^2}+\frac12\left(-\frac{1}{\tah^2(\kappa)}+\tah^2(\kappa)\right)
+\left(\frac{F'(\kappa)}{F(\kappa)}\right)^2\\
&%=-\frac12\frac{1}{\kappa^2}+\left(\frac{F'(\kappa)}{F(\kappa)}\right)^2
%-\frac12\frac{\coh^4(\kappa)-\sih^4(\kappa)}{\coh^2(\kappa)\sih^2(\kappa)}
=-\frac12\frac{1}{\kappa^2}+\left(\frac{F'(\kappa)}{F(\kappa)}\right)^2
-\frac12\frac{1+2\sih^2(\kappa)}{\coh^2(\kappa)\sih^2(\kappa)}\\
&=-\frac{\kappa^2(1+4\sih^2(\kappa))-2\kappa\coh(\kappa)\sih(\kappa)+\sih^2(\kappa)(1+\sih^2(\kappa))
}{4\kappa^2\coh^2(\kappa)\sih^2(\kappa)}\\
\frac{\kappa F''(\kappa)-F'(\kappa)}{F(\kappa)}
%&=\frac{
%(\kappa+\coh(\kappa)\sih(\kappa))^2
%-2\coh(\kappa)\sih(\kappa)\,(\kappa+\coh(\kappa)\sih(\kappa))
%-2\coh^2(\kappa)\sih^2(\kappa)
%-2\kappa^2(1+2\sih^2(\kappa))
%}{4\kappa\coh^2(\kappa)\sih^2(\kappa)}\\
&%=\frac{-\kappa^2-3\coh^2(\kappa)\sih^2(\kappa)
%-4\kappa^2\sih^2(\kappa)
%}{4\kappa\coh^2(\kappa)\sih^2(\kappa)}
=-\frac{\kappa^2(1+4\sih^2(\kappa))+3\sih^2(\kappa)(1+\sih^2(\kappa))
}{4\kappa\coh^2(\kappa)\sih^2(\kappa)}\,.
\end{align*}
Thus
\begin{align*}
&\kappa \frac{F''(\kappa)}{F(\kappa)}-\frac{F'(\kappa)}{F(\kappa)}
+\,\kappa\tah(\kappa)\,\left(\frac{F'(\kappa)}{F(\kappa)}\right)^3\\
%&=-\frac{\kappa^2(1+4\sih^2(\kappa))+3\sih^2(\kappa)(1+\sih^2(\kappa))
%}{4\kappa\coh^2(\kappa)\sih^2(\kappa)}
%+\,\kappa\tah(\kappa)\left(\frac{\kappa+\coh(\kappa)\sih(\kappa)}{2\kappa\coh(\kappa)\sih(\kappa)}\right)^3\\
%&=-\frac{2\kappa^3(1+\sih^2(\kappa))(1+4\sih^2(\kappa))+6\kappa\sih^2(\kappa)(1+\sih^2(\kappa))^2
%-(\kappa+\coh(\kappa)\sih(\kappa))^3
%}{8\kappa^2\coh^4(\kappa)\sih^2(\kappa)}\\
&=-\Bigg(\kappa^3(8\sih^4(\kappa)+10\sih^2(\kappa)+1)
-3\kappa^2\coh(\kappa)\sih(\kappa)\\
&\qquad+3\kappa\sih^2(\kappa)(2\sih^4(\kappa)+3\sih^2(\kappa)+1)
-\coh(\kappa)\sih^3(\kappa)(1+\sih^2(\kappa))
\Bigg)
/(8\kappa^2\coh^4(\kappa)\sih^2(\kappa))
\end{align*}
and
\begin{align*}
&4\sih^2(\kappa)\frac{F'(\kappa)}{F(\kappa)}
+4\tah^2(\kappa)\kappa\left(\frac{F'(\kappa)}{F(\kappa)}\right)^2
-\frac{\tah(\kappa)}{\coh^2(\kappa)}\kappa\left(\kappa \frac{F''(\kappa)}{F(\kappa)}-\frac{F'(\kappa)}{F(\kappa)}\right)\\
%&=
%4\sih^2(\kappa)\frac{\kappa+\coh(\kappa)\sih(\kappa)}{2\kappa\coh(\kappa)\sih(\kappa)}
%+4\tah^2(\kappa)\kappa\left(\frac{\kappa+\coh(\kappa)\sih(\kappa)}{2\kappa\coh(\kappa)\sih(\kappa)}\right)^2\\
%&\qquad+\frac{\tah(\kappa)}{\coh^2(\kappa)}\kappa
%\frac{\kappa^2(1+4\sih^2(\kappa))+3\sih^2(\kappa)(1+\sih^2(\kappa))
%}{4\kappa\coh^2(\kappa)\sih^2(\kappa)}\\
%&=\Bigg(
%8\sih^2(\kappa)(1+\sih^2(\kappa))^2(\kappa+\coh(\kappa)\sih(\kappa))
%+4\coh(\kappa)\sih(\kappa)(\kappa+\coh(\kappa)\sih(\kappa))^2\\
%&\qquad+\kappa\left(
%\kappa^2(1+4\sih^2(\kappa))+3\sih^2(\kappa)(1+\sih^2(\kappa)
%\right)
%\Bigg)/(4\kappa\coh^5(\kappa)\sih(\kappa))\\
&=\Bigg(\kappa^3(1+4\sih^2(\kappa))
+4\kappa^2\coh(\kappa)\sih(\kappa)
+\kappa\sih^2(\kappa)(1+\sih^2(\kappa))\left(8\sih^2(\kappa)+19\right)\\
&\qquad
+4\coh(\kappa)\sih^3(\kappa)(1+\sih^2(\kappa))(3+2\sih^2(\kappa))
\Bigg)/(4\kappa\coh^5(\kappa)\sih(\kappa))
\end{align*}
which lead to the formula for $\Gamma_{{\rm II}}(0)$.
%\begin{align*}
%\Gamma_{{\rm II}}(0)
%&=\frac{\kappa^2}{4}\frac{
%4\sih^2(\kappa) F'(\kappa)
%+4\tah(\kappa) F(\kappa)(F'(\kappa))^2
%-\frac{\tah(\kappa)}{\coh^2(\kappa)}\kappa(\kappa F''(\kappa)-F'(\kappa))
%}{\kappa F''(\kappa)-F'(\kappa)
%+\,(F'(\kappa))^3}\,.%\\
%&=-\frac{\kappa^3}{2}\tah(\kappa)
%%\frac{8\kappa^4\coh^4(\kappa)\sih^2(\kappa)}{16\kappa\coh^5(\kappa)\sih(\kappa)}
%\times\Bigg(\kappa^3(1+4\sih^2(\kappa))
%+4\kappa^2\coh(\kappa)\sih(\kappa)
%+\kappa\sih^2(\kappa)(1+\sih^2(\kappa))\left(8\sih^2(\kappa)+19\right)\\
%&\qquad
%+4\coh(\kappa)\sih^3(\kappa)(1+\sih^2(\kappa))(3+2\sih^2(\kappa))
%\Bigg)\\
%&\times\Bigg(
%\kappa^3(8\sih^4(\kappa)+10\sih^2(\kappa)+1)
%-3\kappa^2\coh(\kappa)\sih(\kappa)\\
%&\qquad+3\kappa\sih^2(\kappa)(2\sih^4(\kappa)+3\sih^2(\kappa)+1)
%-\coh(\kappa)\sih^3(\kappa)(1+\sih^2(\kappa))
%\Bigg)^{-1}\,.
%\end{align*}

\end{proof}

\subsection{Validated numerics}

Since $\kappa$ plays a role in the final discussion we restore now marks of the dependence on $\kappa$. In particular we denote $\Gamma(\kappa,\ell)$ for $\Gamma_\ell$ and its analytic extension. Note however that we have not proved joint regularity for $\Gamma$.

We now complete the proof of the following theorem.

\begin{theorem}\label{th:final}
For any $\kappa>0$, there exists $\ell_*(\kappa)>0$ such that for any $0<|\ell|\leq \ell_*(\kappa)$, there exist positive $\eps_*(\kappa,\ell)$ and $c_*(\kappa,\ell)$ such that for any $0<|\eps|\leq \eps_*(\kappa,\ell)$ the Bloch symbol $L^\eps_{\ell,\xi_\ell(\eps)}$ (with $\xi_\ell(\eps)$ as in Lemma~\ref{l:curve}) possesses an eigenvalue of real part larger than $c_*(\kappa,\ell)\,\eps^2$. 
\end{theorem}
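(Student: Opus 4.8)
The plan is to deduce Theorem~\ref{th:final} from Proposition~\ref{Prop:Gamma} by supplying its one missing input: for every $\kappa>0$ there is $\delta(\kappa)>0$ with $\Gamma(\kappa,\ell)\neq0$ for all $0<|\ell|\leq\delta(\kappa)$. Granting this, fix $\kappa>0$, take $\ell_*$ to be the threshold produced by Proposition~\ref{Prop:Gamma} for this $\kappa$, set $\ell_*(\kappa):=\min(\ell_*,\delta(\kappa))$, and apply Proposition~\ref{Prop:Gamma} with any $0<|\ell|\leq\ell_*(\kappa)$: since $\Gamma(\kappa,\ell)\neq0$ it yields positive constants that we rename $\eps_*(\kappa,\ell)$ and $c_*(\kappa,\ell)$, with exactly the asserted property.

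To prove $\Gamma(\kappa,\ell)\neq0$ for small nonzero $\ell$ I would use that, by Corollary~\ref{c:more} (itself resting on Corollary~\ref{c:formula} and Proposition~\ref{p:asymp}), $\ell\mapsto\Gamma(\kappa,\ell)=\Gamma_{{\rm I}}(\ell)+\Gamma_{{\rm II}}(\ell)$ extends analytically near $\ell=0$ and is \emph{even} in $\ell$: on the formulas of Corollary~\ref{c:formula}, the change $\ell\mapsto-\ell$ acts by $\xi_+\mapsto-\xi_+$ (oddness of $\xi_+$, Lemma~\ref{l:double}) together with $\omega_1\leftrightarrow\omega_{-1}$, leaving each summand invariant. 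Since moreover $\Gamma'(\kappa,0)=0$ by Proposition~\ref{p:asymp},
\[
\Gamma(\kappa,\ell)\ \stackrel{\ell\to0}{=}\ \Gamma(\kappa,0)\,+\,\tfrac12\,\Gamma''(\kappa,0)\,\ell^2\,+\,\cO(\ell^4)\,,
\]
so it is enough to establish that for every $\kappa>0$ the pair $\bigl(\Gamma(\kappa,0),\Gamma''(\kappa,0)\bigr)$ is not $(0,0)$: if $\Gamma(\kappa,0)\neq0$ then $\Gamma(\kappa,\ell)\neq0$ for all small $\ell$, while if $\Gamma(\kappa,0)=0$ and $\Gamma''(\kappa,0)\neq0$ then $\Gamma(\kappa,\ell)\neq0$ for all small nonzero $\ell$, choosing $\delta(\kappa)$ small enough that the $\cO(\ell^4)$ remainder cannot cancel the dominant term.

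The non-simultaneous vanishing is where the explicit formulas are used. Corollary~\ref{c:more} gives $\Gamma(\kappa,0)=\Gamma_{{\rm I}}(0)+\Gamma_{{\rm II}}(0)$ in closed form and Appendix~\ref{additional_formula} does the same for $\Gamma''(\kappa,0)=\Gamma_{{\rm I}}''(0)+\Gamma_{{\rm II}}''(0)$; using the identities for $F'(\kappa),\dots,F''''(\kappa)$ from the proof of Corollary~\ref{c:more} and from Lemma~\ref{l:asymp}, both quantities are rational functions of $\kappa,\sih(\kappa),\coh(\kappa)$ --- equivalently rational in $\kappa$ and $\eD^{\kappa}$ --- whose denominators reduce, up to an explicit nonvanishing elementary factor, to the combination $\kappa F''(\kappa)-F'(\kappa)+(F'(\kappa))^3$ occurring in Proposition~\ref{p:double}, which is nonzero on $(0,\infty)$ with a sign one reads off directly. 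After clearing denominators one must rule out a common zero of two analytic functions on $(0,\infty)$, and I would do this in three overlapping regimes. For $\kappa\to0^+$ a Taylor expansion built on Lemma~\ref{l:asymp} exhibits the leading nonzero coefficient of $\Gamma(\kappa,0)$, giving $\Gamma(\kappa,0)\neq0$ on $(0,\kappa_-]$. For $\kappa\to+\infty$ the asymptotics $\sih(\kappa),\coh(\kappa)\sim\tfrac12\eD^{\kappa}$ and $\tah(\kappa)\to1$ give $\Gamma_{{\rm I}}(0)\sim\tfrac14\kappa^{3}\eD^{2\kappa}$ and $\Gamma_{{\rm II}}(0)\sim-\tfrac16\kappa^{2}\eD^{2\kappa}$, hence $\Gamma(\kappa,0)>0$ on $[\kappa_+,\infty)$. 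On the compact interval $[\kappa_-,\kappa_+]$ I would appeal to validated numerics: partition it into finitely many cells and, using interval arithmetic in $\kappa$ and $\eD^{\kappa}$, certify on each cell a positive lower bound for $|\Gamma(\kappa,0)|$, except on the few cells bracketing the sign change of $\Gamma(\kappa,0)$ noted after Proposition~\ref{p:asymp}, where one instead certifies a positive lower bound for $|\Gamma''(\kappa,0)|$.

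The main obstacle is this last, computer-assisted step, both because of the sheer size of the formula for $\Gamma_{{\rm II}}''(0)$ and because one must make the three regimes overlap and exhaust $(0,\infty)$ with no gap, keep $\kappa F''(\kappa)-F'(\kappa)+(F'(\kappa))^3$ uniformly bounded away from zero on $[\kappa_-,\kappa_+]$, and --- the genuinely delicate point --- certify that $\Gamma''(\kappa,0)\neq0$ exactly where $\Gamma(\kappa,0)$ vanishes. Everything upstream is bookkeeping once Corollaries~\ref{c:formula}--\ref{c:more}, Proposition~\ref{p:asymp}, Lemma~\ref{l:asymp} and Appendix~\ref{additional_formula} are in hand.
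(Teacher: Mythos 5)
Your proposal is correct and follows essentially the same route as the paper: reduce Theorem~\ref{th:final} via Proposition~\ref{Prop:Gamma} to the non-simultaneous vanishing of $\Gamma(\kappa,0)$ and $\d_\ell^2\Gamma(\kappa,0)$ (the paper's Proposition~\ref{p:numerics}, using $\d_\ell\Gamma(\kappa,0)=0$), and then verify this by combining an explicit large-$\kappa$ estimate, a Taylor-vanishing argument near $\kappa=0$, and interval arithmetic on the remaining compact range, certifying $\d_\ell^2\Gamma(\kappa,0)\neq0$ on the small interval where $\Gamma(\kappa,0)$ changes sign. This matches the paper's proof in structure and in all essential details.
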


\begin{remark}\label{rk:shape}
Going back to \eqref{e:char} one may derive from our analysis a more precise description of the unstable spectrum than stated in Theorem~\ref{th:final} and even obtain the expected ellipsoid type shape of the unstable spectrum at fixed $\ell$ with diameters of size $\cO(\eps^2)$. Indeed, focusing on the generic case when $\Gamma(\kappa,0)\neq0$ the outcome of the latter is that for any $\kappa>0$, there exist positive $\mathfrak{c}_0$, $\mathfrak{K}_0$, $\mathfrak{e}_0$ and $\mathfrak{l}_0$ such that for any $0<\eps\leq \mathfrak{e}_0$, for any $(\ell,\xi)$ such that $\mathfrak{K}_0\eps\leq \ell\leq \mathfrak{l}_0$ and $|\xi-\xi_+(\ell)|\leq \mathfrak{c}_0|\ell|$ the spectrum of $L^\eps_{\ell,\xi}$ in the ball $B(\iD\sigma_+(\ell), \mathfrak{c}_0|\ell|)$ is given by
\begin{align}\label{eq:char}
\left(\frac{\lambda}{\iD}-\mathfrak{A}(\ell,\xi,\eps)\right)^2
\,=\,\left(\mathfrak{B}(\ell,\xi,\eps)\right)^2-\left(\mathfrak{C}(\ell,\xi,\eps)\right)^2
\end{align}
with $\mathfrak{A}$, $\mathfrak{B}$, $\mathfrak{C}$ real-valued,
\begin{align*}
\mathfrak{A}&:=\frac{1}{4}\left(-\frac{b_+}{\alpha_+}
+\frac{b_-}{\alpha_-}\right)
\,=\,\frac{\lambda^-_{1,\ell,\xi}+\lambda^+_{-1,\ell,\xi}}{2\iD}+\cO(\eps)
\,=\,%\xi+\frac{\kappa^2}{F^2(\kappa)}\frac{\alpha_+-\alpha_-}{2}+\cO(\eps)
\sigma_+(\ell)+\cO(\eps+|\xi-\xi_+(\ell)|)\\
\mathfrak{B}&:=\frac14\left(\frac{b_+}{\alpha_+}
+\frac{b_-}{\alpha_-}\right)
\,=\,\left(-\frac12\d_\xi^2\varphi(0,0)\,\xi_+(\ell)+\cO(\eps+|\xi-\xi_+(\ell)|)\right)\,(\xi-\xi_\ell(\eps))\\
\mathfrak{C}&:=\frac12\frac{|a|}{\sqrt{\alpha_-\alpha_+}}
\,=\,\frac{\eps^2}{2\sqrt{\alpha_-\alpha_+}}\left(|\Gamma_\ell|+\cO(\eps+|\xi-\xi_+(\ell)|)\right)\,.
\end{align*}
Note that the unstable part of the spectrum we describe corresponds to the parameters such that the right-hand side of \eqref{eq:char} is negative. When $(\eps,\ell)$ is held fixed and $\xi$ is varied over the full interval of length $\cO(|\ell|)$, this right-hand side changes sign twice, the unstable zone being covered over an $\cO(\eps^2/|\ell|)$-neighborhood of $\xi_\ell(\eps)$. At fixed $(\eps,\ell)$ the shape of the spectrum we describe is given by the union of 
\begin{itemize}
\item an ellipsoid-like curve with $\cO(\eps^2)$ variation in real parts and $\cO(\eps^2/|\ell|)$ variation in imaginary parts
\item two segments of the imaginary axis (covered with variable multiplicity), crossing the ellipsoid part, each of $\cO(|\ell|)$ length.
\end{itemize}
See \cite{NRS-EP} for related illustrating numerical solutions and Figure~\ref{figcartoon} for a sketch. Fixing $\eps$ and varying $(\ell,\xi)$ the spectrum covers an $\cO(\eps^2)$-neighborhood of an $\cO(1)$ portion of the imaginary axis, staying $\cO(|\eps|)$ away from the origin.
\end{remark}

\begin{figure}[htbp]
\centering
\includegraphics[scale=0.4]{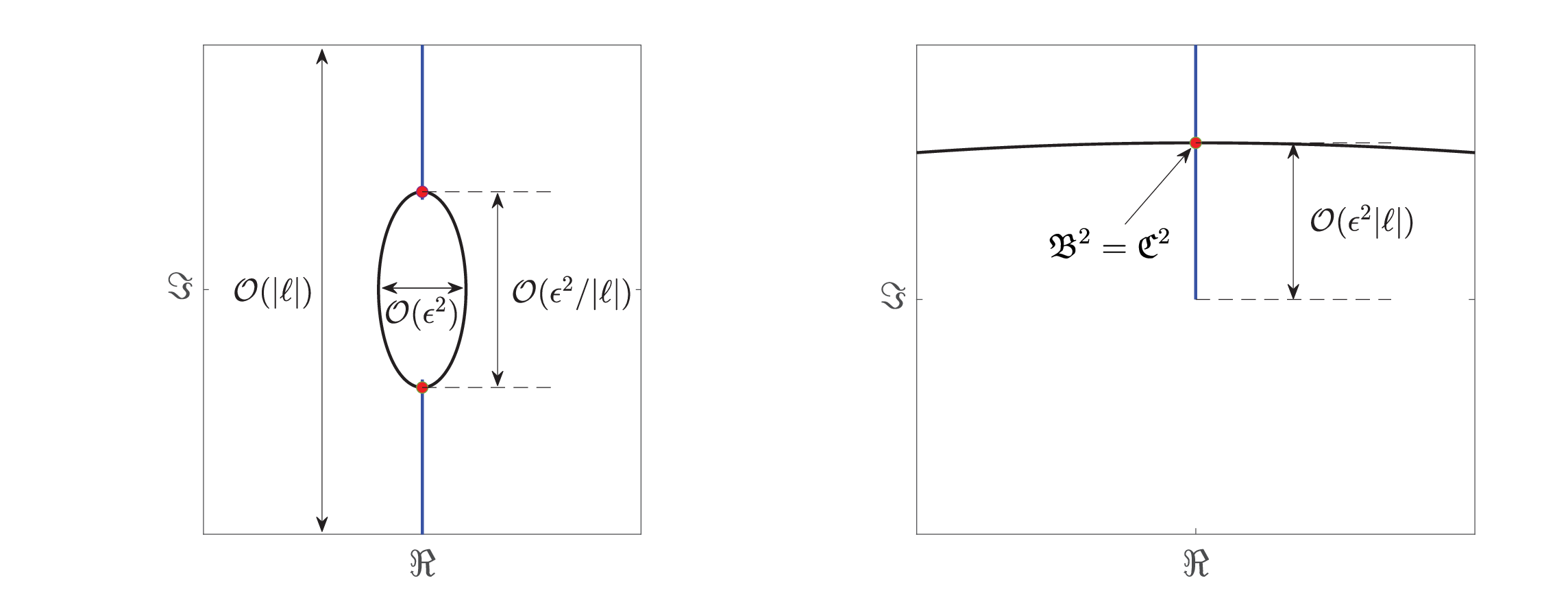}
\caption{Cartoon of spectrum for fixed $(\eps,\ell)$.}
\label{figcartoon}
\end{figure}

\begin{corollary}\label{c:linear}
For some $K>0$ and any $\kappa>0$, there exist positive $c_{**}(\kappa)$ and $\eps_{**}(\kappa)$ such that for any $0<|\eps|\leq \eps_{**}(\kappa)$ there exists a nonzero $U^\eps\in L^2(\R^2)\times H^{1/2}(\R^2)$ satisfying for any $t\geq0$ 
\[
\|\eD^{t\,L^\eps}U^\eps\|_{L^2(\R^2)\times H^{1/2}(\R^2)}
\geq K\,\eD^{t\,c_{**}(\kappa)\,\eps^2}\,\|U^\eps\|_{L^2(\R^2)\times H^{1/2}(\R^2)}
\]
where $(\eD^{t\,L^\eps})_{t\geq0}$ denotes the semigroup generated by $L^\eps$.
\end{corollary}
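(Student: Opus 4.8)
The plan is to upgrade the fiberwise spectral instability of Theorem~\ref{th:final} to a growth statement for the full-space semigroup $(\eD^{tL^\eps})_{t\ge0}$ on $L^2(\R^2)\times H^{\frac12}(\R^2)$ by a Floquet--Bloch wave-packet argument. The one genuine subtlety is that $L^\eps$ itself carries purely continuous spectrum, so an unstable eigenvalue of a single $L^\eps_{\ell,\xi}$ does not yield an eigenfunction of $L^\eps$ in $L^2(\R^2)\times H^{\frac12}(\R^2)$; one must instead superpose fiber eigenfunctions over a set of parameters $(\ell,\xi)$ of positive measure.

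First I would fix $\kappa>0$, choose once and for all some $\ell_0=\ell_0(\kappa)\in(0,\ell_*(\kappa)]$, and set $c_{**}(\kappa):=\tfrac12 c_*(\kappa,\ell_0)>0$ and $\eps_{**}(\kappa):=\eps_*(\kappa,\ell_0)$. For $0<|\eps|\le\eps_{**}(\kappa)$, Theorem~\ref{th:final} furnishes an eigenvalue $\lambda_0^\eps$ of $L^\eps_{\ell_0,\xi_{\ell_0}(\eps)}$ with $\mathrm{Re}\,\lambda_0^\eps\ge c_*(\kappa,\ell_0)\,\eps^2$. Since $L^\eps_{\ell,\xi}$ has compact resolvent, $\lambda_0^\eps$ is automatically an isolated eigenvalue of finite multiplicity --- in fact algebraically simple, as the reduced matrix $D^\eps_{\ell,\xi}$ and its characteristic equation \eqref{e:char} make clear in the strictly unstable regime --- and hence carries a Riesz projector. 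Invoking the continuity (indeed analyticity, via the conformal-coordinate construction) of $(\ell,\xi)\mapsto L^\eps_{\ell,\xi}$ together with standard analytic perturbation theory \cite{Katobook}, I would produce $\rho_\eps>0$ such that on the ball $\mathcal{O}_\eps:=B\big((\ell_0,\xi_{\ell_0}(\eps)),\rho_\eps\big)$ --- taken small enough to remain in the valid parameter range and in $\{0<|\ell|<\ell_0\}$ --- there is an analytic family $(\ell,\xi)\mapsto\big(\lambda^\eps(\ell,\xi),\Phi^\eps(\ell,\xi,\cdot)\big)$ of simple eigenpairs of $L^\eps_{\ell,\xi}$ with $\mathrm{Re}\,\lambda^\eps(\ell,\xi)\ge c_{**}(\kappa)\,\eps^2$ throughout $\mathcal{O}_\eps$; I would normalize $\Phi^\eps(\ell,\xi,\cdot)$ to unit norm in $L^2_{{\rm per},\kappa}\times H^{\frac12}_{{\rm per},\kappa}$ equipped with the $(\ell,\xi)$-adapted inner product $\langle\,\cdot\,,\,\cdot\,\rangle_{\ell,\xi}$ for which $\cB$ is a constant multiple of an isometry from $L^2(\R^2)\times H^{\frac12}(\R^2)$ (the $H^{\frac12}$ weight becoming the fiber weight $\sum_j(1+(j\kappa+\xi)^2+\ell^2)^{1/2}|\cdot|^2$ on Fourier coefficients).

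Next I would set
\[
U^\eps:=\cB^{-1}\big((\ell,\xi,x)\mapsto\mathbf{1}_{\mathcal{O}_\eps}(\ell,\xi)\,\Phi^\eps(\ell,\xi,x)\big),\qquad
W(t):=\cB^{-1}\big((\ell,\xi,x)\mapsto\mathbf{1}_{\mathcal{O}_\eps}(\ell,\xi)\,\eD^{t\lambda^\eps(\ell,\xi)}\,\Phi^\eps(\ell,\xi,x)\big),
\]
and check that $U^\eps$ is a nonzero element of $L^2\times H^{\frac12}(\R^2)$ (as $0<|\mathcal{O}_\eps|<\infty$), that $W\in C^1([0,\infty);L^2\times H^{\frac12}(\R^2))$ with $W(0)=U^\eps$, that each $W(t)$ lies in $D_\eps$ (its Bloch transform being a bounded, compactly-frequency-supported field of fiber eigenfunctions), and --- because $\cB$ diagonalizes $L^\eps$ and $L^\eps_{\ell,\xi}\Phi^\eps(\ell,\xi,\cdot)=\lambda^\eps(\ell,\xi)\Phi^\eps(\ell,\xi,\cdot)$ --- that $L^\eps W(t)=\dot W(t)$ for all $t$. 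Uniqueness for the Cauchy problem of the $C_0$-semigroup generated by $L^\eps$ then identifies $W(t)=\eD^{tL^\eps}U^\eps$, and, using that $\cB$ intertwines $\|\cdot\|_{L^2\times H^{\frac12}(\R^2)}$ with the $L^2_{(\ell,\xi)}$-norm of the adapted fiber norms and that $\Phi^\eps$ is fiberwise unit-normalized,
\[
\|\eD^{tL^\eps}U^\eps\|_{L^2\times H^{\frac12}(\R^2)}^2
=\int_{\mathcal{O}_\eps}\eD^{2t\,\mathrm{Re}\,\lambda^\eps(\ell,\xi)}\,\dif(\ell,\xi)
\,\ge\,\eD^{2t\,c_{**}(\kappa)\eps^2}\,|\mathcal{O}_\eps|
\,=\,\eD^{2t\,c_{**}(\kappa)\eps^2}\,\|U^\eps\|_{L^2\times H^{\frac12}(\R^2)}^2,
\]
which is the asserted bound with $K=1$ (the multiplicative constant hidden in $\cB$ cancels in the ratio). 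The two-sided range of $\eps$ is harmless since $|\eps|^2=\eps^2$; alternatively $L^{-\eps}$ is conjugate to $L^\eps$ by a half-period translation.

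The hard part is \emph{not} the spectral input --- that is entirely provided by Theorem~\ref{th:final} --- but rather making the passage from fiber spectrum to full-space semigroup growth airtight: one must justify the decomposition $\eD^{tL^\eps}\simeq\{\eD^{tL^\eps_{\ell,\xi}}\}_{(\ell,\xi)}$ (handled here by solving a Cauchy problem fiberwise and invoking uniqueness rather than by general direct-integral-of-semigroups theory) and produce a measurable, in fact analytic, selection of normalized fiber eigenfunctions over a positive-measure parameter set; both rely precisely on the two structural facts recalled above, namely the isolation of the unstable fiber eigenvalue (free, from compactness of the resolvent) and the continuity of $(\ell,\xi)\mapsto L^\eps_{\ell,\xi}$. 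A minor cost is that the packet's exponential rate equals $\inf_{\mathcal{O}_\eps}\mathrm{Re}\,\lambda^\eps$, so $\mathcal{O}_\eps$, and with it $U^\eps$, must be chosen $\eps$-dependent and shrinking as $\eps\to0$, which the statement permits.
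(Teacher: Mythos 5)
Your proposal is correct and follows essentially the same route as the paper: both build a Bloch wave packet by superposing a continuously (indeed simply) selected family of unstable fiber eigenfunctions over a small positive-measure neighborhood of $(\ell_*(\kappa),\xi_{\ell_*(\kappa)}(\eps))$ and then use the (near-)isometry of the Fourier/Floquet--Bloch transform. The only differences are cosmetic: you normalize the fiber eigenfunctions in the $(\ell,\xi)$-adapted norm to obtain $K=1$ and spell out the identification $W(t)=\eD^{tL^\eps}U^\eps$ via uniqueness for the Cauchy problem, whereas the paper absorbs the norm equivalence into $K$ and leaves that identification implicit.
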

Let us begin by deducing Corollary~\ref{c:linear} from Theorem~\ref{th:final}. 
\begin{proof}
Let $\kappa>0$ and, with notation from Theorem~\ref{th:final}, set $\eps_{**}(\kappa):=\eps_*(\kappa,\ell_*(\kappa))$ and $c_{**}(\kappa):=c_*(\kappa,\ell_*(\kappa))/2$. Let $0<|\eps|\leq \eps_{**}(\kappa)$. We deduce from Theorem~\ref{th:final} (and its proof) that there exists $\lambda_*$ (depending on $\eps$ and $\kappa$) such that $\Re(\lambda_*)\geq 2\,c_{**}(\kappa)\,\eps^2$ and $\lambda_*$ is a simple\footnote{We use this element to pick a continuous family of eigenvectors and simplify the proof but it is necessary since a measurable choice of eigenvectors and is always available, see \cite[p.30-31]{R}.} eigenvalue of $L^\eps_{\ell_*(\kappa),\xi_{\ell_*(\kappa)}(\eps)}$. Then there exists a positive $\delta_*$ (depending on $\eps$ and $\kappa$) such that when $|\ell-\ell_*(\kappa)|\leq \delta_*$ and $|\xi-\xi_{\ell_*(\kappa)}(\eps)|\leq \delta_*$, $L^\eps_{\ell,\xi}$ possesses a simple eigenvalue of real part larger than $c_{**}(\kappa)\,\eps^2$ and one may choose the eigenvalue and a corresponding eigenvector $(\lambda^\eps_{\ell,\xi},U^\eps_{\ell,\xi})$ continuously with respect to $(\ell,\xi)$. For instance one may obtain the eigenvector by applying a suitable spectral projector of rank $1$ on an eigenvector assoctiated with $\lambda_*$ for $L^\eps_{\ell_*(\kappa),\xi_{\ell_*(\kappa)}(\eps)}$. Finally one may set
\[
U^\eps(x,y)\ :=\ \int_{\xi_{\ell_*(\kappa)}(\eps)-\delta_*}^{\xi_{\ell_*(\kappa)}(\eps)+\delta_*}\int_{\ell_*(\kappa)-\delta_*}^{\ell_*(\kappa)+\delta_*} \eD^{\iD\xi x+\iD\ell y}\ U^\eps_{\ell,\xi}(x)\ \dif\ell\,\dif\xi\,,
\]
so that
\[
(\eD^{t\,L^\eps}\,U^\eps)(x,y)\ =\ \int_{\xi_{\ell_*(\kappa)}(\eps)-\delta_*}^{\xi_{\ell_*(\kappa)}(\eps)+\delta_*}\int_{\ell_*(\kappa)-\delta_*}^{\ell_*(\kappa)+\delta_*} \eD^{\lambda^\eps_{\ell,\xi}\,t}\eD^{\iD\xi x+\iD\ell y}\ U^\eps_{\ell,\xi}(x)\ \dif\ell\,\dif\xi\,.
\]
The constant $K$ simply quantifies the equivalence of norms in spatial and Bloch sides.
\end{proof}
We now turn to proving Theorem~\ref{th:final}. In view of Proposition~\ref{Prop:Gamma} it is sufficient to prove that, for any $\kappa>0$, there is a neighborhood of zero on which $\Gamma(\kappa,\cdot)$ may vanish only at zero. Since $\d_\ell \Gamma(\kappa,0)$ is always $0$ it is sufficient to prove the following proposition.

\begin{proposition}\label{p:numerics}
For any $\kappa>0$, $(\Gamma(\kappa,0)),\d_\ell^2 \Gamma(\kappa,0))\neq(0,0)$. 
\end{proposition}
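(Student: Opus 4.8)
The plan is to reduce Proposition~\ref{p:numerics} to a finite, fully explicit computer-assisted verification using the closed-form expressions already established in Corollary~\ref{c:more} and Appendix~\ref{additional_formula}. The essential observation is that both $\Gamma(\kappa,0)$ and $\tfrac12\d_\ell^2\Gamma(\kappa,0)$ are, after clearing the common denominators appearing in Corollary~\ref{c:formula} and Proposition~\ref{p:asymp}, explicit rational functions of $\kappa$, $\sih(\kappa)$, $\coh(\kappa)$ (equivalently, after writing $\tah(\kappa)=\sih(\kappa)/\coh(\kappa)$ and $F(\kappa)=\sqrt{\kappa\tah(\kappa)}$, rational in $\kappa$, $\eD^{\kappa}$, $\eD^{-\kappa}$). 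Since Proposition~\ref{p:double} guarantees the denominators are nonzero for the relevant range, vanishing of the pair $(\Gamma(\kappa,0),\d_\ell^2\Gamma(\kappa,0))$ is equivalent to the simultaneous vanishing of the two explicit numerators, call them $N_0(\kappa)$ and $N_2(\kappa)$.

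First I would isolate the two numerators as entire analytic functions of $\kappa\in(0,\infty)$ and establish a coarse a priori localization of their zero sets. For $N_0$ this is essentially immediate from the formula $\Gamma_{{\rm I}}(0)=\tfrac{\kappa^3}{8\coh(\kappa)\sih(\kappa)}(8\sih^4(\kappa)+8\sih^2(\kappa)+9)$ together with the explicit $\Gamma_{{\rm II}}(0)$: one checks by elementary estimates that $\Gamma_{{\rm I}}(0)>0$ always, that $\Gamma_{{\rm II}}(0)<0$ always (both numerator and denominator factors in Corollary~\ref{c:more} being sign-definite by crude bounds on $\sih,\coh$ versus polynomials in $\kappa$), and that the ratio $-\Gamma_{{\rm II}}(0)/\Gamma_{{\rm I}}(0)$ crosses $1$ at a single $\kappa_\star$; thus $\lim_{\ell\to0}\Gamma_\ell=\Gamma_{{\rm I}}(0)+\Gamma_{{\rm II}}(0)$ vanishes only at $\kappa=\kappa_\star$, and one can trap $\kappa_\star$ in a short interval $I_\star$ by interval arithmetic. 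Away from $I_\star$ we are done since $\Gamma(\kappa,0)\neq0$. Inside $I_\star$ it then suffices to show $\d_\ell^2\Gamma(\kappa,0)\neq0$, i.e. $N_2(\kappa)\neq0$ for $\kappa\in I_\star$; since $\d_\kappa N_2$ is again an explicit function of the same type, one bounds $|N_2|$ from below on $I_\star$ by evaluating $N_2$ at the midpoint and controlling the oscillation via an interval-arithmetic bound on $\d_\kappa N_2$ over $I_\star$.

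The main technical obstacle is the sheer size of $N_2(\kappa)$: it is assembled from $\tfrac12\Gamma_{{\rm I}}''(0)$ and $\tfrac12\Gamma_{{\rm II}}''(0)$, whose formulas in Proposition~\ref{p:asymp} and Appendix~\ref{additional_formula} involve $F'(\kappa)$, $F''(\kappa)$, $F'''(\kappa)$, $F''''(\kappa)$. One must substitute the closed forms for these derivatives (as in the proof of Corollary~\ref{c:more}, where $F'/F$, $F''/F$ etc. are rewritten in terms of $\kappa,\sih(\kappa),\coh(\kappa)$), clear denominators, and verify that the resulting expression is a genuine (sign-controllable) function rather than one with spurious singularities; the analyticity already proved for $\Gamma_{{\rm II}}$ in Corollary~\ref{c:formula} and for $\ell\mapsto\Gamma_\ell$ near $0$ guarantees no pole survives, but tracking this through the algebra is delicate. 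Once the symbolic simplification is carried out — most conveniently with a symbolic package, then re-exported as an expression amenable to rigorous interval evaluation — the remaining step is routine: one runs interval arithmetic on a finite grid covering a fixed neighborhood $(0,\kappa_{\max})$ of the confirmed localization, using that for $\kappa\geq\kappa_{\max}$ the dominant balance makes $\Gamma(\kappa,0)$ manifestly bounded away from zero (the $\sih,\coh$ terms beat the polynomial terms), to conclude $(\Gamma(\kappa,0),\d_\ell^2\Gamma(\kappa,0))\neq(0,0)$ for every $\kappa>0$. This supplies the missing input to Proposition~\ref{Prop:Gamma} and hence completes the proof of Theorem~\ref{th:final}.
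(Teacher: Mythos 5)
Your overall strategy coincides with the paper's: write $\Gamma(\cdot,0)$ as an explicit prefactor times a quotient $\gamma_{{\rm top}}/\gamma_{{\rm bot}}$ with non-vanishing denominator, localize the zero of $\gamma_{{\rm top}}$ to a tiny interval $I_*$ by interval arithmetic plus a hand estimate for large $\kappa$, then verify $\d_\ell^2\Gamma(\cdot,0)\neq0$ on $I_*$ by a further interval evaluation. However, there is a genuine gap at the left endpoint $\kappa\to0^+$. The numerator $\gamma_{{\rm top}}$ vanishes to seventh order at $\kappa=0$ (and $\Gamma(\kappa,0)=\cO(\kappa^2)$ there), so \emph{no finite grid} of interval evaluations can certify non-vanishing on $(0,\kappa_{\max})$: every subinterval touching $0$ returns an enclosure containing $0$, and a finite family of closed subintervals bounded away from $0$ never covers $(0,\delta)$. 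Your ``finite grid covering a fixed neighborhood $(0,\kappa_{\max})$'' therefore cannot close the argument near the origin. The paper supplies the missing idea: after checking $\gamma_{{\rm top}}^{(j)}(0)=0$ for $0\le j\le 6$, it proves by interval arithmetic that $\gamma_{{\rm top}}^{(7)}$ is sign-definite on $[0,0.15]$ and concludes by the integral Taylor formula that $\gamma_{{\rm top}}$ has no zero on $(0,0.15]$. Some de-singularization of this kind at $\kappa=0$ is indispensable.

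Two further, smaller points. First, your assertion that $-\Gamma_{{\rm II}}(0)/\Gamma_{{\rm I}}(0)$ ``crosses $1$ at a single $\kappa_\star$'' by ``elementary estimates'' is precisely the uniqueness of the zero of $\Gamma(\cdot,0)$, which is neither elementary nor needed: the paper only certifies signs (negative on $[0.15,0.380337]$, positive on $[0.380338,2]$), and only remarks afterwards that the zero in $I_*$ happens to be simple. If you want single-crossing you must prove monotonicity of the ratio or do equivalent interval work; for the Proposition, sign checks outside $I_*$ suffice. Second, for $\kappa\ge\kappa_{\max}$ the ``$\sih,\coh$ beat the polynomials'' heuristic needs care: in $\gamma_{{\rm top}}$ the coefficient of the top exponential $\eD^{20\kappa}$ is $6\kappa-5$, which changes sign, so the paper's positivity proof for $\kappa\ge2$ proceeds by a careful regrouping of terms rather than by isolating a single dominant one. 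The remainder of your plan (midpoint evaluation plus a derivative bound for $N_2$ on $I_*$, and rewriting the derivatives of $F$ as rational expressions in $\kappa$, $\eD^{\pm\kappa}$) matches Corollary~\ref{c:more}, Appendix~\ref{additional_formula} and the paper's final lemma.
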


We have used interval arithmetics to obtain a computer-assisted proof of Proposition~\ref{p:numerics}. Note that the functions we need to evaluate are rational functions of the exponential function so that no extra fancy work is needed here to control remainders in Taylor expansions. We may directly use the interval arithmetic MATLAB package INTLAB and its accompanying bounds for truncation and rounding errors \cite{Ru99a}. Again we refer to \cite{Gomez-Serrano} for a brief introduction to the underlying techniques and a survey of some applications to free-boundary incompressible flows. 

We prove in this way the following lemmas that imply Proposition~\ref{p:numerics} thus Theorem~\ref{th:final}. The codes used to derive the computer-assisted parts of the proof is available at \cite{jiao_2024_13627082}.

\begin{lemma}
The function $\Gamma(\cdot,0)$ does not vanish on $\R_+^*\setminus I_*$ where $I_*:=(0.380337,0.380338)$.
\end{lemma}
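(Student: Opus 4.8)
The plan is to start from the closed-form expression for $\lim_{\ell\to0}\Gamma_\ell=\Gamma(\kappa,0)$ provided in Corollary~\ref{c:more}, which exhibits it as
\[
\Gamma(\kappa,0)\,=\,\frac{\kappa^3}{8\coh(\kappa)\sih(\kappa)}\,\frac{N_0(\kappa)}{D_0(\kappa)}\,,
\]
with $N_0,D_0$ the explicit numerator and denominator displayed there (polynomials in $\kappa$, $\sih(\kappa)$, $\coh(\kappa)$). The prefactor $\frac{\kappa^3}{8\coh(\kappa)\sih(\kappa)}$ is positive on $\R_+^*$, and $D_0$ does not vanish on $\R_+^*$: as already recorded in the proof of Corollary~\ref{c:formula}, this is (a disguised) consequence of the separation bound of Proposition~\ref{p:double} — concretely $D_0(\kappa)$ is a strictly positive multiple of $-\big(\kappa F''(\kappa)-F'(\kappa)+(F'(\kappa))^3\big)$, which is proportional, by a nonzero factor, to $\lim_{\ell\to0}\ell^{-2}\big(4F^2(\omega_0(\ell))-(F(\omega_1(\ell))-F(\omega_{-1}(\ell)))^2\big)$, bounded away from zero by Proposition~\ref{p:double} (alternatively one may simply fold $D_0\neq0$ into the numerical step below). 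So the lemma reduces to the single claim that $N_0(\kappa)\neq0$ for every $\kappa\in\R_+^*\setminus I_*$.

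Next I would split $\R_+^*$ into $(0,\kappa_1]\cup[\kappa_1,\kappa_2]\cup[\kappa_2,\infty)$ for suitable explicit $\kappa_1,\kappa_2$. On the tail $[\kappa_2,\infty)$ I would argue by hand: writing $\sih(\kappa)=\tfrac12 e^{\kappa}(1-e^{-2\kappa})$, $\coh(\kappa)=\tfrac12 e^{\kappa}(1+e^{-2\kappa})$ and expanding, $N_0(\kappa)$ has $e^{10\kappa}$ as its highest exponential and $e^{-10\kappa}N_0(\kappa)=\frac{6\kappa-5}{128}+\sum_{j\geq1}p_j(\kappa)\,e^{-2j\kappa}$ for finitely many polynomials $p_j$ of degree $\leq 3$; since $\kappa\mapsto\frac{6\kappa-5}{128}$ is increasing and positive for $\kappa>\tfrac56$, while $\big|\sum_{j\geq1}p_j(\kappa)e^{-2j\kappa}\big|\leq e^{-2\kappa}\sum_j|p_j(\kappa)|\leq C(1+\kappa^3)e^{-2\kappa}$ for an explicit $C$, one may choose $\kappa_2>\tfrac56$ so that the remainder is strictly dominated by $\frac{6\kappa-5}{128}$ for all $\kappa\geq\kappa_2$; hence $N_0(\kappa)>0$ there. (The same scheme gives $D_0(\kappa)>0$ for $\kappa\geq\kappa_2$, its leading term being $\frac{6\kappa-1}{64}e^{6\kappa}$; and this whole region can alternatively be closed by interval arithmetic in the compactified variable $t=e^{-2\kappa}\in(0,e^{-2\kappa_2}]$.) On the compact middle region $[\kappa_1,\kappa_2]\setminus I_*$ I would use validated numerics directly: $N_0$ being a rational function of $e^{\kappa}$, there are no Taylor-remainder subtleties, and it suffices to produce a validated enclosure of $N_0$ over an adaptive partition of $[\kappa_1,\kappa_2]\setminus I_*$ fine enough that every enclosure excludes $0$; the partition is refined near $\kappa\approx0.3803$, and since $I_*$ is open one must in particular obtain enclosures of $N_0$ at the two endpoints $0.380337$ and $0.380338$ that exclude $0$. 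Finally, near the origin I would factor out the appropriate power of $\kappa$ (namely $\kappa^5$, the order to which $N_0$ and $D_0$ vanish at $0$, by parity and inspection of the low-order Taylor coefficients): $\kappa^{-5}N_0(\kappa)$ and $\kappa^{-5}D_0(\kappa)$ extend to bounded rational functions of $\kappa$ and $e^{\kappa}$ on $[0,\kappa_1]$ with nonzero value at $\kappa=0$, and a validated enclosure confirms they do not vanish on $[0,\kappa_1]$, which rules out a zero of $N_0$ (or $D_0$) in $(0,\kappa_1)$.

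The main obstacle is not analytic but quantitative. It lies in (i) making the tail domination fully rigorous — pinning down an explicit $C$ and $\kappa_2$ for which the exponentially small remainder is provably beaten by $\frac{6\kappa-5}{128}$ uniformly on $[\kappa_2,\infty)$, or, if preferred, closing that region by interval arithmetic after the substitution $t=e^{-2\kappa}$ — and (ii) choosing the subdivisions near the two endpoints of $I_*$ and near $\kappa=0$ fine enough that the validated enclosures strictly exclude $0$. Since all quantities involved are rational functions of $e^{\kappa}$, INTLAB's built-in rounding- and truncation-error bounds \cite{Ru99a} suffice and no further remainder analysis is required; the corresponding code is the one made available at \cite{jiao_2024_13627082}.
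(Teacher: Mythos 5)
Your strategy is essentially the paper's: reduce to the sign of an explicit numerator (the paper's $\gamma_{\rm top}$, a polynomial in $\kappa$ and $\eD^{2\kappa}$, equals your $N_0$ up to a positive factor), dispose of the denominator via Proposition~\ref{p:double}, handle the tail by hand through the leading coefficient $6\kappa-5$ of the highest exponential (the paper does this on $[2,\infty)$ with a more careful term-by-term grouping since its threshold is not large), close a compact middle region minus $I_*$ with INTLAB, and treat the degeneracy at $\kappa=0$ by removing the order of vanishing. Your tail computation and the identification of the leading terms of $N_0$ and $D_0$ are correct.

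The one step that would fail as written is the near-zero regime. You propose to evaluate $\kappa^{-5}N_0(\kappa)$ by a ``validated enclosure'' on $[0,\kappa_1]$ on the grounds that everything is a rational function of $\eD^{\kappa}$, asserting that ``no further remainder analysis is required.'' But a direct interval evaluation of $N_0(\kappa)/\kappa^5$ on subintervals at or near $0$ is useless: $N_0$ is computed as a sum of terms that cancel to fifth order, so the enclosure of $N_0$ over a tiny interval $[0,\delta]$ has width $O(\delta)$ (not $O(\delta^5)$), and dividing by $\kappa^5$ produces an enclosure containing $0$ (and is undefined at $\kappa=0$ itself). Some mechanism that performs the cancellation analytically is indispensable, and this is precisely what the paper supplies: it verifies by Taylor expansion that $\gamma_{\rm top}^{(j)}(0)=0$ for $0\leq j\leq 6$, uses interval arithmetic to show the \emph{seventh derivative} (whose evaluation involves no cancellation) does not vanish on $[0,0.15]$, and concludes by the integral Taylor formula. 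Your plan is repaired by exactly this substitution --- or by an equivalent algebraic rewriting of $N_0$ in a variable like $\eD^{2\kappa}-1$ that makes the fifth-order vanishing explicit --- but as stated the near-zero enclosure is not obtainable.
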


\begin{proof}
We first recast $\Gamma(\cdot,0)$ as
\[
\Gamma(\kappa,0)=\frac{\kappa^3}{4\eD^{2\kappa}(\eD^{4\kappa} - 1)}\frac{\gamma_{{\rm top}}(\kappa)}{\gamma_{{\rm bot}}(\kappa)}
\] 
where 
\begin{align*}
\gamma_{{\rm top}}(\kappa):=&\,(6\kappa - 5)\eD^{20\kappa}+8(4\kappa^3 - 2\kappa - 1)\eD^{18\kappa}+(32\kappa^3 - 48\kappa^2 + 2\kappa + 23)\eD^{16\kappa}\\&\,+16(20\kappa^3 - 8\kappa^2 + 15\kappa + 1)\eD^{14\kappa}+4(232\kappa^3 - 116\kappa^2 - 2\kappa - 11)\eD^{12\kappa}\\
&\,-64\kappa (23\kappa^2 + 7)\eD^{10\kappa}+4(232\kappa^3 + 116\kappa^2 - 2\kappa + 11)\eD^{8\kappa}\\
&\,+16(20\kappa^3 + 8\kappa^2 + 15\kappa - 1)\eD^{6\kappa}+(32\kappa^3 + 48\kappa^2 + 2\kappa - 23)\eD^{4\kappa}\\
&\,+8(4\kappa^3 - 2\kappa + 1)\eD^{2\kappa}+6\kappa + 5\,.
\end{align*}
and $\gamma_{{\rm bot}}$ does not vanish\footnote{As already pointed out in the proof of Corollary~\ref{c:formula}, this is a consequence of Proposition~\ref{p:double}.} on $\R_+^*$. To begin with we observe that when $\kappa\geq2$ 
\begin{align*}
\gamma_{{\rm top}}(\kappa)\geq&\,\frac72\kappa\eD^{20\kappa}
+27\kappa^3\eD^{18\kappa}
+(8\kappa^3 + 2\kappa + 23)\eD^{16\kappa}\\
&\,+16(16\kappa^3+ 15\kappa + 1)\eD^{14\kappa}
+4\left(174-\frac{15}{8}\right)\kappa^3\eD^{12\kappa}\\
&\,-64\kappa (23\kappa^2 + 7)\eD^{10\kappa}
+4(232\kappa^3 + 115\kappa^2+ 11)\eD^{8\kappa}\\
&\,+16(20\kappa^3 + 8\kappa^2 +\frac{29}{2}\kappa)\eD^{6\kappa}
+\left(32\kappa^3 + \left(48-\frac{19}{4}\right)\kappa^2\right)\eD^{4\kappa}\\
&\,+8\left(\frac72\kappa^3+ 1\right)\eD^{2\kappa}+6\kappa + 5\\
\geq& 
27\kappa^3\eD^{18\kappa}
\,-64\kappa (23\kappa^2 + 7)\eD^{10\kappa}
\geq \eD^{10\kappa}\kappa^3\left(27\eD^{16}-64\left(23+\frac74\right)\right)>0\,.
\end{align*}

Now we turn to the small $\kappa$ regime. Direct (but lengthy) Taylor expansions show that $\gamma_{{\rm top}}^{(j)}(0)=0$ for $0\leq j\leq 6$. Then suitable interval arithmetic computations prove that $\gamma_{{\rm top}}^{(7)}$ does not vanish on $[0,0.15]$ so that, by integral Taylor formula, $\gamma_{{\rm top}}$ does not vanish on $[0,0.15]$ either. To be more explicit on computations we point out that we have proved signs by appealing to INTLAB on an equally spaced subdivision of $[0,0.15]$ with $15000$ subintervals. For instance one proves in this way that
\[
\gamma_{{\rm top}}^{(7)}([0,0.00001])
\subset 10^{7} \times[-1.30241843333415,  -0.76279327605071]
\]
and
\[
\gamma_{{\rm top}}^{(7)}([0.14999,0.15])
\subset 10^8 \times [-4.15360258309128,-3.32398859588027] \,.
\]

We have concluded the proof by checking with INTLAB that $\gamma_{{\rm top}}$ is negative on $[0.15,0.380337]$ and positive on $[0.380338,2]$. Again this is proved by appealing to INTLAB on a finite but very large number of subintervals.  
\end{proof}

Though it is not useful to prove Theorem~\ref{th:final} we point out that we have also checked that $\gamma_{{\rm top}}'$ does not vanish on $I_*$ so that the zero of $\Gamma(\cdot,0)$ in $I_*$ is a simple zero.

\begin{lemma}
The function $\d_\ell^2\Gamma(\cdot,0)$ does not vanish on $I_*=(0.380337,0.380338)$.
\end{lemma}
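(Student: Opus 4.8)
\emph{Proof proposal.} The plan is to reduce the statement, just as for the preceding lemma, to a single validated sign computation on the tiny interval $I_*$. First I would assemble from Proposition~\ref{p:asymp} --- together with the alternative, more explicit formulas of Appendix~\ref{additional_formula} --- a closed expression for
\[
\tfrac12\,\d_\ell^2\Gamma(\kappa,0)=\tfrac12\Gamma_{{\rm I}}''(0)+\tfrac12\Gamma_{{\rm II}}''(0)\,.
\]
Both summands are given there in terms of $\kappa$, $\sih(\kappa)$, $\coh(\kappa)$ and of $F(\kappa),F'(\kappa),F''(\kappa),F'''(\kappa),F''''(\kappa)$. Using $F^2(r)=r\tah(r)$ and the rational-in-$\eD^{\kappa}$ expressions for $F'/F$, $F''/F$ and $(\kappa F''-F')/F$ recorded in the proof of Corollary~\ref{c:more}, extended with the analogous formulas for $F'''/F$ and $F''''/F$ obtained by further differentiation (legitimate since $\tah$ is rational in $\eD^{\kappa}$), every odd power of $F$ pairs up and one reaches a representation
\[
\d_\ell^2\Gamma(\kappa,0)=\frac{\gamma^{(2)}_{{\rm top}}(\kappa)}{\eD^{a\kappa}\,\gamma^{(2)}_{{\rm bot}}(\kappa)}
\]
for a small integer $a$, with $\gamma^{(2)}_{{\rm top}},\gamma^{(2)}_{{\rm bot}}$ polynomials in $\kappa$ and $\eD^{\kappa}$ and with $\gamma^{(2)}_{{\rm bot}}$ not vanishing on $\R_+^*$; as in the proof of Corollary~\ref{c:formula}, the non-vanishing of the denominator is a disguised consequence of the separation estimates of Proposition~\ref{p:double}.

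With such a representation in hand it only remains to show that $\gamma^{(2)}_{{\rm top}}$ --- equivalently $\d_\ell^2\Gamma(\cdot,0)$ itself --- has a definite sign on $I_*=(0.380337,0.380338)$, whose length is $10^{-6}$. Since this is a rational function of $\eD^{\kappa}$, no control of Taylor remainders is needed and it can be fed directly to INTLAB, relying only on its built-in truncation- and rounding-error bounds \cite{Ru99a}. A single interval evaluation on $I_*$ --- or, if not yet conclusive, on a handful of equally spaced subintervals of $I_*$ --- produces an enclosure $[c_1,c_2]$ for the range of $\d_\ell^2\Gamma(\cdot,0)$ over $I_*$ with $0\notin[c_1,c_2]$, which is exactly the claim. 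The corresponding code is part of \cite{jiao_2024_13627082}. Combined with the previous lemma (which gives $\Gamma(\kappa,0)\neq0$ for $\kappa\in\R_+^*\setminus I_*$) this yields Proposition~\ref{p:numerics}, and hence Theorem~\ref{th:final}.

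The only real difficulty is the symbolic bookkeeping needed to pass from Proposition~\ref{p:asymp} and Appendix~\ref{additional_formula} to the rational-in-$\eD^{\kappa}$ form above: the expressions for $\Gamma_{{\rm I}}''(0)$ and $\Gamma_{{\rm II}}''(0)$ are long and intertwine $F$ with its first four derivatives and with the expansion coefficients of $\xi_+$, so one must carefully substitute the closed forms, clear denominators, and check that the resulting denominator is indeed $\gamma^{(2)}_{{\rm bot}}$ times an explicit nonzero exponential factor. This step is mechanical but error-prone; in practice I would cross-check it by evaluating both the original and the simplified expression numerically at several values of $\kappa$ before running the validated computation. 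Once the closed form is secured, the interval-arithmetic step is short and routine precisely because $I_*$ is so small.
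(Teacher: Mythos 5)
Your proposal is correct and follows essentially the same route as the paper: the authors likewise take the explicit rational-in-$(\kappa,\eD^{\kappa})$ formulas for $\Gamma_{{\rm I}}''(0)$ and $\Gamma_{{\rm II}}''(0)$ recorded in Appendix~\ref{additional_formula} and perform a single INTLAB interval evaluation over $\overline{I_*}$, obtaining the enclosure $[-0.84878324244033,-0.80979638157683]$, which excludes zero. The symbolic reduction you describe (substituting the closed forms of $F'/F,\dots,F''''/F$ and clearing denominators) is exactly the content of that appendix, so no new idea is missing.
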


\begin{proof}
A single INTLAB computation is sufficient to establish that
\[
\d_\ell^2\Gamma(\cdot,0)(\,\overline{I_*}\,)\subset
[ -0.84878324244033,  -0.80979638157683] \,.
\]
To carry out this computation we have used more explicit formulas of $\d_\ell^2\Gamma(\cdot,0)$ that we provide in an appendix.
\end{proof}

\begin{appendix}
\section{Additional formulas for $\d_\ell^2\Gamma(\cdot,0)$}\label{additional_formula}

As in the derivation of the formula for $\Gamma(\cdot,0)$ in Corollary~\ref{c:more}, by expanding Fourier coefficients and derivatives of $F$ in formulas from Proposition~\ref{p:asymp}, we obtain the following formulas used to evaluate $\d_\ell^2\Gamma(\cdot,0)=\Gamma_{{\rm I}}''(0)+\Gamma_{{\rm II}}''(0)$
\begin{align*}
\Gamma_{{\rm I}}''(0)=&\,\frac{\kappa}{8\eD^{2\kappa}(\eD^{2 \kappa }-1)(\eD^{2 \kappa }+1)^2}
\big((16 \eD^{2 \kappa } (\eD^{4 \kappa }-\eD^{2 \kappa }+1))\kappa ^2-8 \eD^{2 \kappa } (\eD^{4 \kappa }-1)\kappa+(\eD^{4 \kappa }-1)^2\big)^{-1}\\
&\times\Big(-16 \eD^{2 \kappa } (6 \eD^{2 \kappa }-16 \eD^{4 \kappa }+49 \eD^{6 \kappa }-49 \eD^{8 \kappa }+16 \eD^{10 \kappa }-6 \eD^{12 \kappa }+\eD^{14 \kappa }-1)\kappa ^3\\
&\quad\;\;\;-128 \eD^{4 \kappa } (8 \eD^{4 \kappa }-\eD^{2 \kappa }+8 \eD^{6 \kappa }-\eD^{8 \kappa }+2 \eD^{10 \kappa }+2)\kappa ^2\\
&\quad\;\;\;+(33 \eD^{2 \kappa }+312 \eD^{6 \kappa }+342 \eD^{8 \kappa }-342 \eD^{10 \kappa }-312 \eD^{12 \kappa }-33 \eD^{16 \kappa }-3 \eD^{18 \kappa }+3)\kappa\\
&\quad\;\;\;-6 (\eD^{2 \kappa }-1)^2 (\eD^{2 \kappa }+1)^3 (4 \eD^{2 \kappa }+10 \eD^{4 \kappa }+4 \eD^{6 \kappa }+\eD^{8 \kappa }+1)\Big)\,,
\end{align*}
and
\begin{align*}
&\Gamma_{{\rm II}}''(0)=\frac{\kappa\big(\eD^{-2 \kappa }-1\big)}{24\big(\eD^{2 \kappa }+1\big)^2}
\Big(16 \eD^{2 \kappa } (\eD^{4 \kappa }-\eD^{2 \kappa }+1)\kappa ^2-8 \eD^{2 \kappa } (\eD^{4 \kappa }-1)\kappa(\eD^{4 \kappa }-1)^2\Big)^{-1}\\
&\times\Big(32 \eD^{2 \kappa } (\eD^{2 \kappa }-2 \eD^{4 \kappa }+\eD^{6 \kappa }+\eD^{8 \kappa }+1)\kappa ^3-48 \eD^{4 \kappa } (\eD^{4 \kappa }-1)\kappa ^2
+6 (\eD^{4 \kappa }-1)^2 (\eD^{4 \kappa }+1)\kappa-(\eD^{4 \kappa }-1)^3\Big)^{-2}\\
&\times\Big(524288 \eD^{14 \kappa } (\eD^{2 \kappa }+1) {(\eD^{4 \kappa }-\eD^{2 \kappa }+1)}^2\kappa ^{10}\\
&\qquad
-16384 \eD^{6 \kappa } \Big(6 \eD^{2 \kappa }+4 \eD^{4 \kappa }-35 \eD^{6 \kappa }
-19 \eD^{8 \kappa }+148 \eD^{10 \kappa }-247 \eD^{12 \kappa }+247 \eD^{14 \kappa }\\
&\qquad\qquad\qquad\qquad
-148 \eD^{16 \kappa }+19 \eD^{18 \kappa }+35 \eD^{20 \kappa }-4 \eD^{22 \kappa }
-6 \eD^{24 \kappa }+3 \eD^{26 \kappa }-3\Big)\kappa ^9\\
&\qquad
+16384 \eD^{6 \kappa }\Big(98 \eD^{6 \kappa }-63 \eD^{4 \kappa }-\eD^{2 \kappa }+126 \eD^{8 \kappa }-265 \eD^{10 \kappa }+121 \eD^{12 \kappa }+121 \eD^{14 \kappa }-265 \eD^{16 \kappa }\\
&\qquad\qquad\qquad\qquad
+126 \eD^{18 \kappa }+98 \eD^{20 \kappa }-63 \eD^{22 \kappa }-\eD^{24 \kappa }+8 \eD^{26 \kappa }+8\Big)\kappa ^8\\
&\qquad
-1024\eD^{4\kappa}(\eD^{4\kappa} - 1) \Big(57 \eD^{2 \kappa }+339 \eD^{4 \kappa }-429 \eD^{6 \kappa }+2103 \eD^{8 \kappa }+2229 \eD^{10 \kappa }+1346 \eD^{12 \kappa }+1346 \eD^{14 \kappa }\\
&\qquad\qquad\qquad\qquad
+2229 \eD^{16 \kappa }+2103 \eD^{18 \kappa }-429 \eD^{20 \kappa }+339 \eD^{22 \kappa }+57 \eD^{24 \kappa }+19 \eD^{26 \kappa }+19\Big)\kappa ^7\\
&\qquad-
1024 \eD^{4 \kappa } {(\eD^{2 \kappa }-1)}^2 {(\eD^{2 \kappa }+1)}^3 \Big(34 \eD^{2 \kappa }+436 \eD^{4 \kappa }+1358 \eD^{6 \kappa }-1113 \eD^{8 \kappa }+792 \eD^{10 \kappa }\\
&\qquad\qquad\qquad\qquad
-1113 \eD^{12 \kappa }+1358 \eD^{14 \kappa }+436 \eD^{16 \kappa }+34 \eD^{18 \kappa }-67 \eD^{20 \kappa }-67\Big)\kappa ^6\\
&\qquad
-64 \eD^{2 \kappa } {(\eD^{2 \kappa }-1)}^3 {(\eD^{2 \kappa }+1)}^4 \Big(1047 \eD^{2 \kappa }+5045 \eD^{4 \kappa }+3676 \eD^{6 \kappa }-3990 \eD^{8 \kappa }-19046 \eD^{10 \kappa }\\
&\qquad\qquad\qquad\qquad
-3990 \eD^{12 \kappa }+3676 \eD^{14 \kappa }+5045 \eD^{16 \kappa }+1047 \eD^{18 \kappa }+49 \eD^{20 \kappa }+49\Big)\kappa ^5\\
&\qquad
+64\eD^{2\kappa}(\eD^{4\kappa} - 1)^4\Big(225 \eD^{2 \kappa }+913 \eD^{4 \kappa }+483 \eD^{6 \kappa }+12913 \eD^{8 \kappa }+12913 \eD^{10 \kappa }+483 \eD^{12 \kappa }\\
&\qquad\qquad\qquad\qquad
+913 \eD^{14 \kappa }+225 \eD^{16 \kappa }+130 \eD^{18 \kappa }+130\Big)\kappa ^4\\
&\qquad
-4(\eD^{4\kappa} - 1)^5\Big(1537 \eD^{2 \kappa }+13444 \eD^{4 \kappa }
+42404 \eD^{6 \kappa }+1838 \eD^{8 \kappa }+1838 \eD^{10 \kappa }+42404 \eD^{12 \kappa }\\
&\qquad\qquad\qquad\qquad
+13444 \eD^{14 \kappa }+1537 \eD^{16 \kappa }+9 \eD^{18 \kappa }+9\Big)\kappa ^3\\
&\qquad
-12(\eD^{4\kappa}- 1)^6\Big(499 \eD^{2 \kappa }+1389 \eD^{4 \kappa }-1327 \eD^{6 \kappa }-1327 \eD^{8 \kappa }+1389 \eD^{10 \kappa }+499 \eD^{12 \kappa }
-9 \eD^{14 \kappa }-9\Big)\kappa ^2\\
&\qquad
-3 (\eD^{4\kappa}- 1)^7 \Big(159 \eD^{2 \kappa }-1010 \eD^{4 \kappa }-1010 \eD^{6 \kappa }+159 \eD^{8 \kappa }+155 \eD^{10 \kappa }+155\Big)\kappa\\
&\qquad+24 (\eD^{4\kappa}- 1)^8 \Big(11 \eD^{2 \kappa }+11 \eD^{4 \kappa }+3 \eD^{6 \kappa }+3\Big)\Big).
\end{align*}

\end{appendix}

%\bibliographystyle{alphaabbr}
%\bibliography{wwbib}

\newcommand{\etalchar}[1]{$^{#1}$}

\end{document}